\def\pen{\mathrm{pen}}
\def\argmin{\mathrm{argmin}}
\newtheorem{Th}{Theorem}
\newtheorem{Def}{Definition}
\newtheorem{Prop}{Proposition}
\newtheorem{Lemma}{Lemma}
\newtheorem{Cor}{Corollary}
\renewenvironment{proof}{\noindent{\bf Proof.}}{\hfill
  $\blacksquare$\par\noindent}
 \newcommand{\com}[1]{}
\newcommand{\E}{\ensuremath{\mathbb{E}}}
\newcommand{\D}{{\mathcal{ D}}}
\renewcommand{\P}{\ensuremath{\mathbb{P}}}
\newcommand{\R}{\ensuremath{\mathbb{R}}}
\renewcommand{\L}{\ensuremath{\mathbb{L}}}
\newcommand{\ga}{\ensuremath{\gamma}}
\newcommand{\al}{\ensuremath{\alpha}}
\newcommand{\de}{\ensuremath{\delta}}
\newcommand{\la}{\ensuremath{\lambda}}
\newcommand{\ka}{\ensuremath{\kappa}}
\newcommand{\p}{\ensuremath{\varphi}} 
\newcommand{\be}{\ensuremath{\beta}}
\newcommand{\Ne}{\ensuremath{\mathbb{N}}}
\renewcommand{\com}[1]{{\it #1}}
\newcommand{\Mcol}{\mathcal{M}}
\newcommand{\Mcols}{\widetilde{\mathcal{M}}}
\newcommand{\Mwealthcol}{\mathcal{M}^{(l)}}
\newcommand{\Mwealthcolprime}{\mathcal{M}'^{\:(l)}}
\newcommand{\Msievecol}{\mathcal{M}^{(s)}}
\newcommand{\Msievecolprime}{\mathcal{M}'^{\:(s)}}
\newcommand{\Mstratcol}{\mathcal{M}^{(h)}}
\newcommand{\Ws}{\mathbf{W}}
\numberwithin{equation}{section}
\title{Maxisets for model selection}
\author[]{F. Autin, E. Le pennec, J.M. Loubes and V. Rivoirard }
\address{{\bf{Florent Autin}} : Centre de Math\'ematiques et d'Informatique\\
39, rue F. Joliot Curie, 13453 Marseille Cedex 13.
\vskip .1in {\bf{Erwan Le Pennec}} : Laboratoire de Probabilit{\'e}s et Mod{\`e}les Al{\'e}atoires,
UMR 7599\\
Universit{\'e} Paris Diderot\\
175 rue du Chevaleret, 75013 Paris
  \vskip .1in
{\bf{Jean-Michel Loubes}} : Institut de Math\'ematiques de Toulouse,\\ Equipe de Probabilit\'es et de Statistique\\
Universit\'e de Toulouse Paul Sabatier,
118 Route de Narbonne, 31000 Toulouse. \vskip .1in
{\bf{Vincent Rivoirard}} : Laboratoire de Math{\'e}matiques, UMR 8628\\
Universit{\'e} Paris-Sud.\\
B{\^a}t 425, 91405 Orday cedex\\
And D\'epartement de Math\'ematiques et Applications UMR 8553\\
Ecole Normale Sup\'erieure \\
45, rue d'Ulm\\
75230 Paris Cedex 05
 \vspace{1cm}}
\email{autin@cmi.univ-mrs.fr}
\email{lepennec@math.jussieu.fr}
\email{Jean-Michel.Loubes@math.ups-tlse.fr}
\email{Vincent.Rivoirard@math.u-psud.fr}
\begin{document}
\begin{abstract}
We address the statistical issue of determining the maximal spaces
(maxisets)
where model selection procedures attain a given rate of convergence. By considering first general dictionaries, then orthonormal bases, we characterize these
maxisets in terms of approximation spaces. These results are
illustrated by  classical choices of wavelet model collections. For each of them, the maxisets are described in terms of functional spaces.
We take a special care of the issue of calculability and measure the
induced loss of performance in terms of maxisets.
\end{abstract}
\maketitle

\textbf{Keywords:} approximations spaces, approximation
theory, Besov spaces, estimation, maxiset, model selection,  rates of convergence.

\textbf{AMS MOS:} 62G05, 62G20, 41A25, 42C40.

\section{Introduction} \label{s:intro}
The topic of this paper lies on the frontier between statistics and
approximation theory. Our goal is to characterize the functions well
estimated by a special class of estimation procedures: the model
selection rules. Our purpose is not to build new model selection
estimators but to determine thoroughly the functions for which well known
model selection procedures achieve good performances. Of course,
approximation theory plays a crucial role in our
setting but surprisingly its role is
even more important than the one of statistical tools. 
This statement will be emphasized by the use of the
\emph{maxiset approach}, which  illustrates the well known fact that
``well estimating is well approximating''.

\noindent More precisely we consider
the classical Gaussian 
white noise model 
\begin{equation*}\label{model} 
dY_{n,t}=s(t)dt+\frac{1}{\sqrt{n}} dW_t,\quad t\in \D, 
\end{equation*} 
where $\D\subset\R$, $s$ is the unknown function, $W$ is the Brownian 
motion in $\R$ and  $n\in\mathbb{N}^*=\{1,2,\dots,\}$.
This model means that for
any $u\in\L_2(\D)$, 
\[
Y_n(u)=\int_\D u(t)dY_{n,t}=\int_\D u(t)
s(t)dt+\frac{1}{\sqrt{n}} W_u
\]
is  observable where $W_u=\int_\D u(t)dW_t$ is a centered Gaussian process such that for all functions $u$ and $u'$,
\[
\E[W_u W_{u'}]= \int_\D  u(t)  u'(t) dt.
\]
We take a noise level of the form
$1/\sqrt{n}$ to refer to the asymptotic equivalence between the
Gaussian white noise model and the classical regression model with $n$
equispaced observations (see \cite{nussbaum}). 

\noindent Two questions naturally arise: how to construct an estimator $\hat s$
of $s$ based on the observation $dY_{n,t}$ and how to measure its
performance? Many estimators have been proposed in this setting
(wavelet thresholding, kernel rules, Bayesian procedures...). In this
paper, we only focus on
model selection techniques  described accurately in the next paragraph.
\subsection{Model selection procedures}\label{s:model}

The model selection methodology consists in constructing an estimator
by minimizing an empirical contrast $\ga_n$ over a given set, called a
model. 
The pioneer
work in model selection goes back in the 1970's with Mallows \cite{mal}
and Akaike \cite{aka}. Birg\'e and Massart develop the whole modern theory of
model selection in \cite{MR2288064,MR1848946,MR1848840} or
\cite{MR1679028} for instance. Estimation of a regression function
with model selection estimators is considered by Baraud in
\cite{Baraud2,Baraud1}, while inverse problems are tackled by
Loubes and Lude\~na \cite{lou1,lou2}. Finally model selection techniques provide nowadays
valuable tools in statistical learning (see  Boucheron et al. \cite{bou}).

\noindent In nonparametric estimation, performances of estimators are usually measured by using the quadratic norm, which gives rise to the following empirical quadratic contrast
\[
\ga_n(u)=-2Y_n(u)+\|u\|^2
\]
for any function
$u$, where $\|\cdot\|$ denotes the norm associated to $\L_2(\mathcal{D})$.
We assume that we are  given a dictionary of functions  of $\L_2(\D)$,
denoted by $\Phi=(\p_i)_{i\in\mathcal{I}}$ where $\mathcal{I}$ is a
countable set and we consider $\mathcal{M}_n$, a collection of models
spanned by some functions of $\Phi$. For any $m\in \mathcal{M}_n$, we denote by $\mathcal{I}_m$ the subset of $\mathcal{I}$ such that
\[m=\mbox{span}\{\p_i: \quad i\in \mathcal{I}_m\}\]
and $D_m\leq|\mathcal{I}_m|$ the
dimension of $m$.
Let $\hat s_m$ be the function that minimizes the
 quadratic empirical criterion  $\ga_n(u)$  with respect to $u\in m$. 
A straightforward computation shows that the estimator $\hat{s}_m$ is the projection of the data onto the space
$m$. So, if $\{e_1^m,\dots,e_{D_m}^m\}$ is an orthonormal basis (not
necessarily related to $\Phi$) of $m$
and
\[\hat\be_i^m=Y_n(e_i^m)=\int_\mathcal{D}e_i^m(t)dY_{n,t}\]
 then
\[
\hat s_m=\sum_{i\in \mathcal{I}_m}\hat\be_i^me_i^m,\quad\text{and}\quad \ga_n(\hat s_m)=-\sum_{i\in
\mathcal{I}_m}(\hat\be_i^m)^2.
\]
Now, the issue is the selection of the best model $\hat m$ from the data which 
gives rise to the \emph{model selection estimator} $\hat{s}_{\hat{m}}$.  For this purpose, a penalized rule is
considered, which aims at selecting an estimator, close enough to the
data, but still lying in a small space to avoid overfitting
issues. 
Let ${\rm pen}_n(m)$ be a penalty function
which increases when $D_m$ increases.
The model $\hat
m$ is selected using the following penalized criterion
\begin{equation}
\label{eq:minimization}
\hat m=\arg\min_{m\in \mathcal{ M}_n}\left\{\ga_n(\hat s_m)+\pen_n(m)\right\}.
\end{equation}
The choice of the model collection and the associated penalty are then
the key issues handled by model selection theory. We point out that the choices of both the model collection and the penalty function  should depend on the noise level. This is emphasized by the subscript $n$ for $\Mcol_n$ and $\pen_n(m)$.

\noindent The asymptotic behavior of model selection estimators has
been studied by many authors. We refer to 
Massart \cite{mas} for general references and 
recall hereafter the main oracle type inequality. 
Such an oracle inequality provides a non asymptotic control on the estimation error with 
respect to a bias term $\|s-s_m\|$, where $s_m$ stands for the best
approximation  (in the $\mathbb{L}_2$ sense) of the function  $s$ by
a function of $m.$ In other words $s_m$ is the orthogonal projection of
$s$ onto $m$, defined by
\[
s_m=\sum_{i\in  \mathcal{I}_m}\be_i^m e_i^m,\quad \be_i^m=\int_\mathcal{D}e_i^m(t)s(t)dt.
\]
\begin{Th}[Theorem 4.2 of \cite{mas}]\label{massart}
Let $n\in \mathbb{N}^\star$ be fixed and let $(x_m)_{m\in \mathcal{ M}_n}$ be some family of positive numbers such that
\begin{equation}\label{kraft} \quad \quad \quad 
\sum_{m\in \mathcal{ M}_n}\exp(-x_m)=\Sigma_n<\infty.
\end{equation}
Let $\kappa>1$ and assume that
\begin{equation}\label{minpen}
\pen_n(m)\geq \frac{\kappa}{n}\left(\sqrt{D_m}+\sqrt{2x_m}\right)^2.
\end{equation}
Then,  almost surely, there  exists some  minimizer $\hat  m$ of  the penalized
least-squares criterion
\[
\ga_n(\hat s_m)+\pen_n(m)
\]
over  $m\in \mathcal{  M}_n$.  Moreover, the  corresponding penalized  least-squares
estimator $\hat s_{\hat m}$ is unique and the following inequality is valid:
\begin{equation}\label{oracletypeMas}
\E\left[\|\hat   s_{\hat   m}-s\|^2\right]\leq  C\left[\inf_{m\in   \mathcal{
M}_n}\left\{\|s_m-s\|^2+\pen_n(m)\right\}+\frac{1+\Sigma_n}{n}\right],
\end{equation}
where $C$ depends only on $\kappa$.
\end{Th}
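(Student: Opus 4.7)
The plan follows Massart's classical Gaussian-concentration strategy. Introduce the centered Gaussian linear form $\nu_n(u):=n^{-1/2}W_u$ (so $\E[\nu_n(u)\nu_n(v)]=\langle u,v\rangle/n$) and the elementary identity
\[
\|u-s\|^2 = \gamma_n(u)+2\nu_n(u)+\|s\|^2,\qquad u\in\L_2(\D).
\]
Applied to $u=\hat s_{\hat m}$, combined with the defining inequality $\gamma_n(\hat s_{\hat m})+\pen_n(\hat m)\leq \gamma_n(\hat s_m)+\pen_n(m)$ and the optimality $\gamma_n(\hat s_m)\leq \gamma_n(s_m)$, this produces, for every competitor $m\in\mathcal{M}_n$,
\[
\|\hat s_{\hat m}-s\|^2 \leq \|s_m-s\|^2 + \pen_n(m)-\pen_n(\hat m) + 2\nu_n(\hat s_{\hat m}-s_m).
\]
The entire task is then to absorb the stochastic term $2\nu_n(\hat s_{\hat m}-s_m)$ into $\pen_n(\hat m)+\pen_n(m)$ up to a $O(1/n)$ remainder.

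For each pair $(m,m')\in\mathcal{M}_n^2$, let $\chi_{m,m'}:=\sup\{\nu_n(u):u\in m+m',\;\|u\|\leq 1\}$. Since $\sqrt n\,\nu_n$ restricted to $m+m'$ is a canonical Gaussian process on a space of dimension at most $D_m+D_{m'}$, Jensen's inequality together with Cirelson's (Borell--TIS) concentration yield
\[
\E\chi_{m,m'}\leq \frac{\sqrt{D_m}+\sqrt{D_{m'}}}{\sqrt n},\qquad \P\!\left(\sqrt n\,\chi_{m,m'}\geq \sqrt{D_m}+\sqrt{D_{m'}}+\sqrt{2x_{m'}}\right)\leq e^{-x_{m'}}.
\]
A union bound over $m'$ with \eqref{kraft} shows that the event $\Omega$ on which all these bounds hold simultaneously (for the fixed $m$) satisfies $\P(\Omega^c)\leq \Sigma_n$.

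On $\Omega$, I would write $2\nu_n(\hat s_{\hat m}-s_m)\leq 2\|\hat s_{\hat m}-s_m\|\,\chi_{m,\hat m}$ and apply Young's inequality $2ab\leq \theta a^2+\theta^{-1}b^2$; the squared norm splits as $\|\hat s_{\hat m}-s_m\|^2\leq (1+\eta)\|\hat s_{\hat m}-s\|^2+(1+\eta^{-1})\|s_m-s\|^2$, and the Cirelson bound is dilated via $(\sqrt{D_m}+\sqrt{D_{\hat m}}+\sqrt{2x_{\hat m}})^2\leq (1+\delta)(\sqrt{D_{\hat m}}+\sqrt{2x_{\hat m}})^2+(1+\delta^{-1})D_m$ so that $\theta^{-1}\chi_{m,\hat m}^2$ becomes a linear combination of $\pen_n(\hat m)$ and $\pen_n(m)$ via \eqref{minpen}. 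The margin $\kappa>1$ is exactly what permits a joint choice of the three free parameters with $(1+\delta)/(\theta\kappa)<1$ and $\theta(1+\eta)<1$: the first inequality guarantees strict absorption of $\pen_n(\hat m)$, and the second lets the $\|\hat s_{\hat m}-s\|^2$ contribution be moved back to the left-hand side. Taking the infimum over $m$ yields the oracle inequality on $\Omega$; on $\Omega^c$, integration of the Gaussian tails (whose cumulative mass is controlled by $\Sigma_n$) produces the remainder $(1+\Sigma_n)/n$ of \eqref{oracletypeMas}. Existence and uniqueness of $\hat m$ follow from coercivity: $\pen_n(m)\geq \kappa D_m/n$ with $\kappa>1$ dominates the almost-sure linear growth in $D_m$ of $-\gamma_n(\hat s_m)=\|\hat s_m\|^2$, so the penalized criterion tends to $+\infty$ as $D_m\to\infty$. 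The main obstacle is precisely this simultaneous balance of $\theta,\eta,\delta$: preserving the shape $(\sqrt{D_m}+\sqrt{2x_m})^2$ inside the penalty, rather than splitting its two summands, is what ultimately makes the constant $C$ depend only on $\kappa$.
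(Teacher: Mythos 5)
You should first note that the paper does not prove this statement at all: it is imported verbatim as Theorem 4.2 of Massart's Saint-Flour notes and used as a black box (it supplies inclusion \eqref{inc2} of the maxiset identification). So there is no internal proof to compare against, and I can only judge your reconstruction against the canonical argument — which it follows faithfully: the decomposition into bias, penalty difference and the stochastic term $2\nu_n(\hat s_{\hat m}-s_m)$; the reduction of that term to the chi-type variable $\chi_{m,\hat m}=\sup\{\nu_n(u):u\in m+\hat m,\ \|u\|\le 1\}$; Cirelson--Borell concentration of $\sqrt n\,\chi_{m,m'}$ around $\sqrt{D_m+D_{m'}}$; and the absorption of everything into the penalties, where the slack $\kappa>1$ is indeed exactly what lets the three parameters $\theta,\eta,\delta$ be chosen compatibly. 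Incidentally, the same device — uniform concentration of $\|\Ws_{m+m'}\|$ over pairs of models under a Kraft-type condition — is precisely what the paper's own Lemma \ref{cirelson} uses for the converse inclusion, so your machinery is consistent with what the authors do elsewhere.

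One step of your write-up needs restructuring. The theorem assumes only $\Sigma_n<\infty$, not $\Sigma_n<1$, so the union bound $\P(\Omega^c)\le\Sigma_n$ can be vacuous; and even when $\Sigma_n$ is small, bounding $\E\bigl[\|\hat s_{\hat m}-s\|^2\mathbf{1}_{\Omega^c}\bigr]$ requires more than the measure of $\Omega^c$, since the loss is not a priori bounded on the bad event. The standard fix — which your closing remark about ``integration of the Gaussian tails'' gestures at but does not carry out — is to avoid the good-event/bad-event dichotomy altogether: set $U_{m'}=\bigl(\sqrt n\,\chi_{m,m'}-\sqrt{D_m}-\sqrt{D_{m'}}-\sqrt{2x_{m'}}\bigr)_+$, check by integrating the Gaussian tail that $\E\bigl[\sup_{m'}U_{m'}^2\bigr]\le\sum_{m'}\E\bigl[U_{m'}^2\bigr]\le 2\Sigma_n$, insert the pointwise bound $\sqrt n\,\chi_{m,\hat m}\le \sqrt{D_m}+\sqrt{D_{\hat m}}+\sqrt{2x_{\hat m}}+\sup_{m'}U_{m'}$ into your Young-inequality step, and take expectations of an inequality valid on the whole probability space; the term $\sup_{m'}U_{m'}^2/n$ then produces the $(1+\Sigma_n)/n$ remainder of \eqref{oracletypeMas}. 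A similar uniform-concentration argument (not mere coercivity of the mean) is needed to make the existence and uniqueness claim rigorous. With these repairs your proof is correct and is, in substance, Massart's.
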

\noindent Equation (\ref{oracletypeMas}) is the key result to establish optimality of penalized estimators under oracle or minimax points of view. In this paper, we focus on an alternative to these approaches: the maxiset point of view.
\subsection{The maxiset point of view}\label{s:maxiset}
Before describing the maxiset approach, let us briefly recall that for a given procedure $s^*=(s_n^*)_n$, the minimax study of $s^*$ consists in comparing the rate of convergence of $s^*$ achieved on a given functional space
$\mathcal{F}$ with the best
possible rate achieved by any estimator. More precisely,
let
$\mathcal{ F}(R)$  be the ball of radius
$R$ associated with $\mathcal{ F}$, the procedure $s^*=(s_n^*)_n$ achieves the rate $\rho^*=(\rho^*_n)_n$ on $\mathcal{ F}(R)$ if 
\[
\sup_n\left\{(\rho^*_n)^{-2}\sup_{s\in                                   \mathcal{
F}(R)}\E\left[\|s_n^*-s\|^2\right]\right\}<\infty.
\]
 To check that a procedure is optimal from the minimax
point of view (said to be minimax), it must be proved that its rate of convergence achieves
the best rate among any procedure on each ball of the class. 
This minimax approach is extensively used and many methods cited above are proved to
be minimax in different statistical frameworks.

\noindent However, the choice of the function class is subjective and, in the
minimax framework, statisticians have no idea whether there are other functions well
estimated at the rate $\rho^*$ by their procedure. A different point
of view is to consider the procedure  $s^*$  as given and search  all the
functions $s$ that are well estimated at a given rate $\rho^*$: this is the
\emph{maxiset} approach, which has been proposed by Kerkyacharian and Picard \cite{kp-a}.
The maximal space, or maxiset, of the procedure $s^*$ for this rate 
$\rho^*$ is defined as the set of all these functions. Obviously,
the larger the maxiset, the better the procedure. We
set the following definition. 
\begin{Def}
Let $\rho^*=(\rho^*_n)_n$ be a decreasing sequence of
positive real numbers and let $s^*=(s_n^*)_n$ be an
estimation procedure. The maxiset of $s^*$  associated with the
rate $\rho^*$  is 
\[
MS(s^*,\rho^*) =\left\{s\in\L_2(\D):\quad
\sup_n\left\{(\rho^*_n)^{-2}\E\left[\|s_n^*-s\|^2\right]\right\}<\infty\right\},
\]
the ball of radius $R>0$ of the maxiset is defined by
\[
MS(s^*,\rho^*)(R) =
\left\{s\in\L_2(\D):\quad
\sup_n\left\{(\rho^*_n)^{-2}\E\left[\|s_n^*-s\|^2\right]\right\}\leq R^2\right\}.
\]
\end{Def}
\noindent Of course,  there exist  connections between maxiset  and minimax  points of
view: $s^*$ achieves the rate
$\rho^*$ on $\mathcal{ F}$ if and only if 
\[
\mathcal{ F}\subset MS(s^*,\rho^*).
\]
In the white noise setting, the maxiset theory has been investigated
 for a wide range of estimation procedures, including kernel,
 thresholding and Lepski procedures, Bayesian or linear
 rules. We refer to \cite{aut-b},
 \cite{apr}, \cite{br}, \cite{cdkp}, \cite{kp-a},  \cite{riv-a}, and \cite{riv-b} for general
 results. Maxisets have also been investigated for other statistical
 models, see  \cite{aut-a} and \cite{rt}. 



\subsection{Overview of the paper}
The goal of this paper is to investigate maxisets of model selection procedures. Following the classical model selection literature,  we only use penalties proportional to the dimension $D_m$
of $m$:
\begin{equation}\label{eq:pen}
\pen_n(m) = \frac{\lambda_n}{n} D_m,
\end{equation}
with $\lambda_n$ to be specified. Our main result characterizes these maxisets in terms of approximation
spaces. More precisely, we establish an equivalence between the
statistical performance of  $\hat s_{\hat m}$ and the approximation
properties of the model collections $\mathcal{M}_n$. With
\begin{equation}\label{rateL}
\rho_{n,\al}=\left(\frac{\la_n}{n}\right)^{\frac{\al}{1+2\al}}
\end{equation}
for any $\alpha>0$, Theorem ~\ref{main}, combined with Theorem~\ref{massart} proves that,
for a given function $s$,
the quadratic risk $\E[\|s-\hat s_{\hat m}\|^2]$ decays at the rate $\rho_{n,\al}^2$ 
if and only if
the deterministic quantity 
\begin{equation}\label{Q}
Q(s,n)=\inf_{m\in
    \mathcal{ M}_n}  \left\{\|    s_{m}-s\|^2+ \frac{\lambda_n}{n} D_m
\right\}
\end{equation}
 decays
at the rate $\rho_{n,\al}^2$ as well. This
result holds with mild assumptions on $\lambda_n$ and under an
embedding assumption on the model collections ($\mathcal{M}_n
\subset \mathcal{M}_{n+1}$). Once we impose additional structure on
the model collections, the deterministic condition can be rephrased as
a linear approximation property and a non linear one as stated in Theorem~\ref{main2}.\\
We illustrate these results for three different model collections  based on wavelet bases. The
first one deals with sieves in which all the models are embedded, the
second one with the collection of all subspaces spanned by vectors of
a given basis. For these examples, we handle the issue of calculability
and give  explicit characterizations of the maxisets. In the third example, we provide an
intermediate choice of model collections and use the fact that
the embedding condition on the model collections can be relaxed. Finally performances of these estimators are compared and discussed.

\noindent The paper is organized as follows.  Section~\ref{s:main} describes the main general results established in this paper. More precisely, we specify results valid for general dictionaries in Section \ref{dico}. In Section \ref{basis}, we focus on the case where $\Phi$ is an orthonormal family.  Section~\ref{part3} is devoted to the
illustrations of these results for some model selection estimators associated with wavelet methods. In particular, a comparison of  maxiset performances are provided and discussed. Section~\ref{proofs} gives the proofs of our results.
\section{Main results} \label{s:main}
As explained in the introduction, our  goal is  to
investigate  maxisets associated with model  selection estimators $\hat{s}_{\hat{m}}$ 
where the penalty function is defined in (\ref{eq:pen})
and with the rate $\rho_\al=(\rho_{n,\al})_n$ where $\rho_{n,\al}$ is specified in (\ref{rateL}). 
Observe  that $\rho_{n,\al}$  depends on the choice of $\lambda_n$.  It can  be for instance polynomial, or can take the classical form \[\rho_{n,\al}=\left(\frac{\log n}{n}\right)^{\frac{\al}{1+2\al}}.\]
So we wish to determine
\[
MS(\hat{s}_{\hat{m}},\rho_\al)=\left\{s\in\L_2(\D):\quad
\sup_n\left\{\rho_{n,\al}^{-2}\E \left[\|\hat s_{\hat m}-s\|^2\right]\right\}<\infty\right\}.
\]
In the sequel, we use the following notation: if $\mathcal{ F}$ is a given space \[MS(\hat{s}_{\hat{m}},\rho_\al):=:\mathcal{ F}\]
means  that for any $R>0$, there exists $R'>0$ such that 
\begin{equation}\label{inc1}
MS(\hat{s}_{\hat{m}},\rho_\al)(R)\subset \mathcal{ F}(R')
\end{equation}
and  for  any $R'>0$,  there  exists  $R>0$  such that  
\begin{equation}\label{inc2}
\mathcal{  F}(R')\subset MS(\hat{s}_{\hat{m}},\rho_\al)(R).
\end{equation}
\subsection{The case of general dictionaries}\label{dico}
In this section, we make no assumption on $\Phi$. Theorem~\ref{massart} is a non asymptotic result while
maxisets results deal with rates of convergence (with asymptotics in
$n$). Therefore obtaining maxiset results for model selection
estimators requires a structure on the sequence of model collections. We first focus on the case of nested model collections ($\Mcol_n \subset \Mcol_{n+1}$). Note that this does not imply a strong structure on the model collection  for a given $n$. In particular, this does not imply that the models are nested.
Identifying the maxiset $MS(\hat{s}_{\hat{m}},\rho_\al)$ 
is a two-step procedure. 
We need to establish inclusion (\ref{inc1}) and inclusion (\ref{inc2}). Recall that we have introduced previously
\[
Q(s,n) =\inf_{m\in
    \mathcal{ M}_n}  \left\{\|    s_{m}-s\|^2+ \frac{\lambda_n}{n} D_m
    \right\}.
\] 
Roughly speaking, Theorem~\ref{massart} established by Massart proves
that any function $s$ satisfying
\[
\sup_n\left\{\rho_{n,\al}^{-2}Q(s,n)\right\}\leq (R')^2\]
belongs to the maxiset $MS(\hat{s}_{\hat{m}},\rho_\al)$ and thus
provides inclusion  (\ref{inc2}).
The following theorem establishes inclusion (\ref{inc1}) and
highlights  that $Q(s,n)$ 
plays a capital role.
\begin{Th}\label{main}
Let $0<\al_0<\infty$ be fixed. Let us assume that the sequence of model collections
satisfies for any $n$
\begin{equation}\label{hyplambdaembed}
\Mcol_n \subset \Mcol_{n+1},
\end{equation}
and that  the sequence of positive numbers
$(\la_n)_n$ is non-decreasing and satisfies 
\begin{equation}\label{hyplambdalim}
\lim_{n\to +\infty}n^{-1}\la_n=0,
\end{equation}
and there exist $n_0\in\Ne^*$ and two constants $0<\delta\leq \frac{1}{2}$ and $0<p<1$ such that for $n\geq n_0$,
\begin{equation}\label{hyplambda}
 \la_{2n}\leq 2(1-\delta)\la_{n},
\end{equation}
\begin{align}\label{eq:Kraftmain}
\sum_{m\in\Mcol_n} e^{-\frac{(\sqrt{\lambda_n}-1)^2D_m}{2}} \leq \sqrt{1 - p}
\end{align}
and 
\begin{equation}\label{hyplambda0}
\la_{n_0}\geq \Upsilon(\de,p,\al_0),
\end{equation}
where $\Upsilon(\de,p,\al_0)$ is a positive constant only depending on $\al_0$, $p$ and $\de$ defined in Equation (\ref{erwan}) of Section 4.
Then, the penalized  rule  $\hat s_{\hat  m}$ is such that
for any $\al\in  (0,\al_0]$, for any $R>0$, there exists  $R'>0$ such that for
$s\in\L_2(\D)$,
\[\sup_n\left\{\rho_{n,\al}^{-2}\E\left[\|\hat    s_{\hat
m}-s\|^2\right]\right\}\leq R^2  
\Rightarrow 
\sup_n\left\{\rho_{n,\al}^{-2}Q(s,n)\right\}\leq (R')^2.\]
\end{Th}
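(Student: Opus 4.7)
Since $\hat m\in\Mcol_n$ for every realization, the definition of the infimum and the pointwise Pythagorean identity $\|\hat s_{\hat m}-s\|^2=\|s-s_{\hat m}\|^2+\|\hat s_{\hat m}-s_{\hat m}\|^2$ (the orthogonality $s-s_{\hat m}\perp \hat m$ holds conditionally on $\hat m$) immediately give, after taking expectations,
\[
Q(s,n)\leq \E[\|s-s_{\hat m}\|^2]+\E[\pen_n(\hat m)]\leq R^2\rho_{n,\al}^2+\E[\pen_n(\hat m)].
\]
The bias is thus already controlled by hypothesis, and the entire argument reduces to proving $\E[\pen_n(\hat m)]\leq C\,\rho_{n,\al}^2$ for a constant $C$ depending only on $R$, $\de$, $p$ and $\al_0$.

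For the penalty I use the defining inequality $\gamma_n(\hat s_{\hat m})+\pen_n(\hat m)\leq \gamma_n(\hat s_{m_0})+\pen_n(m_0)$ for arbitrary deterministic $m_0\in\Mcol_n$. Writing $\gamma_n(u)=\|u-s\|^2-\|s\|^2-\tfrac{2}{\sqrt n}W_u$ and expanding both $\|\hat s_{\hat m}-s\|^2$ and $\|\hat s_{m_0}-s\|^2$ via Pythagoras, I obtain a schematic master inequality
\[
\pen_n(\hat m)+\|s-s_{\hat m}\|^2\leq \|s-s_{m_0}\|^2+\pen_n(m_0)+\Delta_\chi+\Delta_W,
\]
where $\Delta_\chi=\|\hat s_{\hat m}-s_{\hat m}\|^2-\|\hat s_{m_0}-s_{m_0}\|^2$ is a difference of scaled chi-squared variables and $\Delta_W=\frac{2}{\sqrt n}(W_{\hat s_{\hat m}}-W_{\hat s_{m_0}})$ is a Gaussian linear term. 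Condition (\ref{eq:Kraftmain}) is exactly tuned to the tail $\P(\chi^2_{D_m}\geq (\sqrt{D_m}+\sqrt{2x_m})^2)\leq e^{-x_m}$ with $x_m=(\sqrt{\la_n}-1)^2D_m/2$, so that $(\sqrt{D_m}+\sqrt{2x_m})^2=\la_n D_m=n\,\pen_n(m)$; a union bound then shows that
\[
\Omega_n=\bigl\{\forall m\in\Mcol_n:\ \|\hat s_m-s_m\|^2\leq \pen_n(m)\bigr\}
\]
satisfies $\P(\Omega_n^c)\leq \sqrt{1-p}$, and a parallel Gaussian bound on $\sup_{u\in m+m_0,\|u\|=1}|W_u|$ controls $\Delta_W$ uniformly over $m\in\Mcol_n$.

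On $\Omega_n$, Young's inequality $2ab\leq \epsilon a^2+\epsilon^{-1}b^2$ splits the bounds on $\Delta_\chi$ and $\Delta_W$ between the two sides of the master inequality, yielding, after taking the infimum over $m_0\in\Mcol_n$, a bound of the form $\pen_n(\hat m)\mathbf 1_{\Omega_n}\leq c_1 Q(s,n)+c_2\,\rho_{n,\al}^2$ with an absorption constant $c_1<1$ that is uniform in $n\geq n_0$ and $\al\in(0,\al_0]$; ensuring this is the very purpose of the threshold $\la_{n_0}\geq \Upsilon(\de,p,\al_0)$, combined with the embedding (\ref{hyplambdaembed}) and the doubling (\ref{hyplambda}) to propagate the absorption across dyadic scales. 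The complementary term $\E[\pen_n(\hat m)\mathbf 1_{\Omega_n^c}]$ is handled by Cauchy--Schwarz, using $\P(\Omega_n^c)\leq \sqrt{1-p}$ together with the crude second-moment bound $\pen_n(\hat m)\leq \|\hat s_{\hat m}\|^2$ obtained from the minimizer inequality applied at $m_0=\{0\}$. Combining both contributions with the initial bias estimate yields $Q(s,n)\leq R^2\rho_{n,\al}^2+c_1 Q(s,n)+c_3\rho_{n,\al}^2$, which rearranges to $Q(s,n)\leq (R')^2\rho_{n,\al}^2$ for $n\geq n_0$; the finite range $n<n_0$ is handled by the trivial bound $Q(s,n)\leq \|s\|^2$ together with the uniform lower bound on $\rho_{n,\al}^2$ on that range. \textbf{The main obstacle} is the simultaneous calibration of the $\chi^2$ concentration, the Gaussian supremum bound, and Young's inequality so as to keep $c_1<1$ uniformly in $n\geq n_0$ and $\al\in(0,\al_0]$; this is exactly what the explicit constant $\Upsilon(\de,p,\al_0)$ of Section~4 encodes.
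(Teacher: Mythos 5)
Your opening reduction ($Q(s,n)\leq \E[\|\hat s_{\hat m}-s\|^2]+\E[\pen_n(\hat m)]$, hence it suffices to bound $\E[\pen_n(\hat m)]$) is correct, but the step on which everything then rests --- the inequality $\pen_n(\hat m)\mathbf 1_{\Omega_n}\leq c_1\,Q(s,n)+c_2\,\rho_{n,\al}^2$ with an absorption constant $c_1<1$ --- cannot be produced by the route you describe, and this is a genuine gap. Starting from $\ga_n(\hat s_{\hat m})+\pen_n(\hat m)\leq\ga_n(\hat s_{m_0})+\pen_n(m_0)$ and controlling the cross term $\tfrac{2}{\sqrt n}W_{\hat s_{\hat m}-\hat s_{m_0}}$ by Young's inequality with parameter $\e$, you must take $\e<1$ to absorb $\e\|\Ws_{\hat m+m_0}\|^2/n\leq\e\la_n(D_{\hat m}+D_{m_0})/n$ into the left-hand side, and you then pick up $\e^{-1}\|\hat s_{\hat m}-\hat s_{m_0}\|^2\leq (1+\eta)\e^{-1}\|\hat s_{\hat m}-s\|^2+(1+\eta^{-1})\e^{-1}\|\hat s_{m_0}-s\|^2$; since $\|\hat s_{m_0}-s\|^2\geq\|s_{m_0}-s\|^2$, the resulting coefficient of $\|s_{m_0}-s\|^2$ after dividing by $(1-\e)$ is at least $1/(\e(1-\e))\geq 4$, for every admissible choice of $\e$ and $\eta$. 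As $Q(s,n)$ contains exactly this bias term, $c_1>1$ always, the obstruction is independent of $\la_n$, and no threshold $\Upsilon(\de,p,\al_0)$ can repair the self-bounding step $Q\leq R^2\rho^2+c_1Q+c_3\rho^2$. A second, independent problem is the bad event: $\P(\Omega_n^c)$ is only bounded by the fixed constant $\sqrt{1-p}$, not by a vanishing quantity, and your crude bound on $\pen_n(\hat m)$ gives a second moment of constant order (roughly $\|s\|^4$), so Cauchy--Schwarz on $\E[\pen_n(\hat m)\mathbf 1_{\Omega_n^c}]$ yields a constant, not $O(\rho_{n,\al}^2)$.

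The paper escapes both traps by never bounding the random quantity $\pen_n(\hat m)$ in expectation. It introduces an auxiliary oracle $m_0(n)=\argmin_m\{\|s_m-s\|^2+\la_nD_m/(Kn)\}$ whose penalty is reduced by a large constant $K$; the minimality of $m_0$ for this reduced criterion converts the bound on $\la_n(D_{\hat m}-D_{m_0})/n$ into a bound on the \emph{deterministic} quantity $\|s_{m_0}-s\|^2$, in which all the large Young constants appear divided by $K$ and can therefore be made smaller than $1$ and absorbed (Lemma~\ref{lem:lemmastart}); because the quantity is deterministic, the event $A_n$ enters only through the divisor $\P\{A_n\}\geq p$ and no bad-event term ever arises. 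Even then, this controls only the bias of $m_0(n)$ and not the dimension term $\la_nD_{m_0(n)}/(Kn)$, which is also part of $Q(s,n)$: that term is obtained by a dyadic induction (Lemma~\ref{induction}) comparing scale $n$ to scale $n/2$ via $\Mcol_{n/2}\subset\Mcol_n$ and $\la_n\leq2(1-\de)\la_{n/2}$, the contraction coming from the gap $g(\de,\al_0)=\inf_{x\in[1/2,1-\de]}\{x^{2\al/(2\al+1)}-x\}>0$ between the decay of the target rate and that of the weights; this is where \eqref{hyplambdaembed}, \eqref{hyplambda} and the threshold \eqref{hyplambda0} actually operate. Your proposal invokes these hypotheses only in passing and uses neither the reduced-penalty oracle nor the induction in any operative way; without them the argument does not close.
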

\noindent Technical Assumptions (\ref{hyplambdalim}), (\ref{hyplambda}), (\ref{eq:Kraftmain}) and (\ref{hyplambda0}) are very mild and could
be partly relaxed while preserving the results. Assumption
(\ref{hyplambdalim}) is necessary to deal with rates converging to 0.
Note that the classical cases
$\la_n=\la_0$   or    $\la_n=\la_0\log(n)$   satisfy
(\ref{hyplambdalim}) and   (\ref{hyplambda}). Furthermore, Assumption
(\ref{hyplambda0})  is always satisfied when $\lambda_n = \lambda_0
\log(n)$ or when $\lambda_n=\lambda_0$ with $\lambda_0$ large enough.
Assumption (\ref{eq:Kraftmain}) is very close to
Assumptions~\eqref{kraft}-(\ref{minpen}). In particular, if there exist two constants $\ka>1$ and $0<p<1$ such that for any $n$,
\begin{align}\label{eq:Kraftmainbis}
\sum_{m\in\Mcol_n} e^{-\frac{(\sqrt{\ka^{-1}\lambda_n}-1)^2D_m}{2}} \leq \sqrt{1 - p}
\end{align}
then, since 
\[\pen_n(m) = \frac{\lambda_n}{n} D_m,\]
Conditions (\ref{kraft}), (\ref{minpen}) and (\ref{eq:Kraftmain}) are all satisfied.
The  assumption $\al\in (0,\al_0]$ can  be relaxed
for particular model collections, which will be
highlighted in  Proposition~\ref{nestedtheo} of Section \ref{nested}. Finally, Assumption~(\ref{hyplambdaembed}) can be removed for some special
choice of model collection $\Mcol_n$ at the price of a slight
overpenalization as it shall be shown in
Proposition~\ref{prop:approxmod} and Section~\ref{special}.

\noindent Combining Theorems \ref{massart} and \ref{main} 
gives a first
characterization of the maxiset of the model selection procedure
$\hat{s}_{\hat m}$:
\begin{Cor}\label{corollaire}
Let $\al_0<\infty$ be fixed. 
Assume that Assumptions (\ref{hyplambdaembed}), (\ref{hyplambdalim}), (\ref{hyplambda})
~(\ref{hyplambda0}) and~(\ref{eq:Kraftmainbis}) are satisfied.
Then for any $\al\in  (0,\al_0]$,
\[MS(\hat s_{\hat m},\rho_\al):=:\left\{s\in \L_2(\D): \quad \sup_n\left\{\rho_{n,\al}^{-2}Q(s,n)\right\}<\infty\right\}.\] 
\end{Cor}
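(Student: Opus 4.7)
The plan is to obtain the two inclusions (\ref{inc1}) and (\ref{inc2}) that define the relation $:=:$ by matching Theorem~\ref{main} with Theorem~\ref{massart}: inclusion (\ref{inc1}) is the content of Theorem~\ref{main}, so I only have to produce inclusion (\ref{inc2}) from Massart's oracle inequality.

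For (\ref{inc2}), fix $R'>0$ and a function $s$ with $\sup_n \rho_{n,\al}^{-2} Q(s,n) \leq (R')^2$. I would apply Theorem~\ref{massart} with the family $x_m = \tfrac{1}{2}(\sqrt{\kappa^{-1}\lambda_n}-1)^2 D_m$, where $\kappa>1$ is the constant appearing in~(\ref{eq:Kraftmainbis}). With this choice a direct computation gives $(\sqrt{D_m}+\sqrt{2x_m})^2 = \kappa^{-1}\lambda_n D_m$, so the penalty $\pen_n(m)=\lambda_n D_m/n$ satisfies the minimal size condition~(\ref{minpen}). Moreover, assumption~(\ref{eq:Kraftmainbis}) directly gives $\sum_{m\in\Mcol_n}e^{-x_m}=:\Sigma_n \leq \sqrt{1-p}<1$, so~(\ref{kraft}) holds with a uniform bound in $n$. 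Theorem~\ref{massart} then yields, with $C=C(\kappa)$,
\begin{equation*}
\E\bigl[\|\hat s_{\hat m}-s\|^2\bigr] \leq C\left[Q(s,n)+\frac{1+\Sigma_n}{n}\right] \leq C\left[(R')^2 \rho_{n,\al}^2+\frac{2}{n}\right].
\end{equation*}

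It remains to absorb the residual $1/n$ into $\rho_{n,\al}^2$. Writing $\gamma=2\al/(1+2\al)\in(0,1)$, we have $\rho_{n,\al}^{-2}\,n^{-1}=n^{\gamma-1}\lambda_n^{-\gamma}$. Since $\gamma<1$ and $\lambda_n$ is non-decreasing (hence bounded away from $0$ once $n\geq n_0$ by~(\ref{hyplambda0})), this quantity is bounded in $n$, with a bound depending only on $\al_0$, $\lambda_{n_0}$ and the behaviour of $\lambda_n$. Combined with the previous display, this provides a constant $R>0$ (depending only on $R'$, $\kappa$, $p$ and the sequence $\lambda_n$) such that $\sup_n\rho_{n,\al}^{-2}\E[\|\hat s_{\hat m}-s\|^2]\leq R^2$, which is exactly inclusion~(\ref{inc2}).

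The only slightly delicate point in the above is checking that the choice of $x_m$ derived from the strengthened Kraft-type condition~(\ref{eq:Kraftmainbis}) simultaneously matches both~(\ref{kraft}) (for uniform control of $\Sigma_n$) and~(\ref{minpen}) (for the shape of the penalty); everything else is a mechanical combination of the two theorems. Conversely, Theorem~\ref{main} supplies inclusion~(\ref{inc1}) under the stated assumptions, so both inclusions of the definition~(\ref{inc1})--(\ref{inc2}) hold and the maxiset identification $MS(\hat s_{\hat m},\rho_\al):=:\{s\in\L_2(\D):\sup_n\rho_{n,\al}^{-2}Q(s,n)<\infty\}$ follows.
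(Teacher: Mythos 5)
Your proposal is correct and takes essentially the same route as the paper: the corollary is obtained there by simply combining Theorem~\ref{main} (which gives inclusion (\ref{inc1})) with Theorem~\ref{massart} (which gives inclusion (\ref{inc2})), and your choice $x_m=\tfrac12(\sqrt{\kappa^{-1}\lambda_n}-1)^2D_m$, the verification of (\ref{kraft}) and (\ref{minpen}) from (\ref{eq:Kraftmainbis}), and the absorption of the residual $(1+\Sigma_n)/n$ into $\rho_{n,\al}^2$ merely make explicit the details the paper leaves implicit. The only tacit point, shared with the paper, is that the identity $(\sqrt{D_m}+\sqrt{2x_m})^2=\kappa^{-1}\lambda_n D_m$ requires $\sqrt{\kappa^{-1}\lambda_n}\geq 1$, which is ensured by the monotonicity of $(\lambda_n)_n$ and (\ref{hyplambda0}) for the relevant range of $n$.
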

\noindent The maxiset of $\hat s_{\hat m}$ is characterized by a deterministic
approximation property
of $s$ with respect to the models $\Mcol_n$. It can be related
to some classical approximation properties of $s$ in terms of approximation rates if the functions of $\Phi$ are orthonormal. 
\subsection{The case of orthonormal bases}\label{basis}
 From now on, $\Phi=\{\p_i\}_{i\in\mathcal{I}}$ is assumed to be an
orthonormal basis (for the $\L_2$ scalar product). We also assume that the model collections
$\Mcol_n$ are constructed through restrictions of a single model
collection $\Mcol$. Namely,  given a collection of models $\Mcol$ we
introduce a sequence $\mathcal{J}_n$ of increasing subsets of the indices
set $\mathcal{I}$ and we 
 define the intermediate collection $\Mcol'_n$ as
\begin{equation}\label{eq:structureaux}
\Mcol'_n = \{ m'=\mbox{span}\{\p_i:\quad i\in {\mathcal I}_m\cap  {\mathcal J}_n\}:\quad
 m \in \Mcol\}.
\end{equation}
The model collections $\Mcol'_n$ do not necessarily
satisfy the embedding condition~(\ref{hyplambdaembed}). Thus, we define
\[
\Mcol_n = 
\bigcup_{k\leq n} \Mcol_k'
\]
so $\Mcol_n \subset
\Mcol_{n+1}$.  The assumptions on $\Phi$ and on the model collections allow to give an explicit characterization of the maxisets. We denote $\Mcols = \cup_n \Mcol_n = \cup_n
\Mcol_n'$. Remark that without any further assumption 
$\Mcols$ can be a larger model collection than $\Mcol$. Now, let us denote by $V=(V_n)_n$ the sequence of approximation spaces defined by
\[
V_n = \text{span}\{ \p_i:\quad i \in \mathcal{J}_n \}
\]
and consider the corresponding approximation space
\[
\mathcal{L}_V^{\al}= \left\{s\in\L_2(\D):\quad 
\sup_n \left\{\rho_{n,\al}^{-1}
  \|P_{V_n}s - s\|\right\}<\infty
\right\},
\]
where $P_{V_n}s$ is the projection of $s$ onto $V_n$.
Define also another kind of approximation sets: 
\[
\mathcal{ A}_{_{\Mcols}}^{\al}= \left\{s\in\L_2(\D):\quad \sup_{M>0}\left\{
M^{\al}\inf_{\{m\in \Mcols :\: D_m\leq
M\}}\|s_m-s\|\right\}<\infty\right\}.
\]
The corresponding balls of radius $R>0$ are defined, as usual, by replacing $\infty$
by $R$ in the previous definitions. 
We have the following result.
\begin{Th}\label{main2}
Let $\al_0<\infty$ be fixed. 
Assume that  (\ref{hyplambdalim}), (\ref{hyplambda}),
~(\ref{hyplambda0}) and~(\ref{eq:Kraftmainbis}) are satisfied.
Then,  the penalized  rule  $\hat s_{\hat  m}$
satisfies the following result:
for any $\al\in~(0,\al_0]$,
\[
MS(\hat s_{\hat m},\rho_\al):=:\mathcal{ A}_{_{\Mcols}}^{\al} \cap
\mathcal{L}_V^{\al}.
\]
\end{Th}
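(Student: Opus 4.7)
The plan is to apply Corollary~\ref{corollaire} to reduce Theorem~\ref{main2} to the deterministic equivalence
\[
\left\{s\in\L_2(\D):\sup_n\rho_{n,\al}^{-2}Q(s,n)<\infty\right\}:=:\mathcal{A}_{_{\Mcols}}^{\al}\cap\mathcal{L}_V^{\al}.
\]
This reduction is legitimate: by construction $\Mcol_n=\bigcup_{k\leq n}\Mcol_k'$ satisfies the embedding hypothesis~(\ref{hyplambdaembed}), and the remaining hypotheses of Corollary~\ref{corollaire} are exactly those imposed in Theorem~\ref{main2}. It therefore suffices to prove the two inclusions at the level of balls, setting throughout $M_n=(n/\la_n)^{1/(1+2\al)}$, so that $\rho_{n,\al}^{2}=M_n^{-2\al}=\frac{\la_n}{n}M_n$.

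\emph{From approximation to $Q$-control.} Assume $s\in\mathcal{A}_{_{\Mcols}}^{\al}(R)\cap\mathcal{L}_V^{\al}(R)$. By the definition of $\mathcal{A}_{_{\Mcols}}^{\al}$ one may choose $m^{*}\in\Mcols$ with $D_{m^{*}}\leq M_n$ and $\|s-s_{m^{*}}\|^{2}\leq C R^{2}\rho_{n,\al}^{2}$. If $m^{*}\in\Mcol_n$ we plug it directly into the definition of $Q$. Otherwise $m^{*}\in\Mcol_{k}'$ for some $k>n$, so there exists $\tilde m\in\Mcol$ with $\mathcal{I}_{m^{*}}=\mathcal{I}_{\tilde m}\cap\mathcal{J}_k$; we substitute the trimmed model $m=\mbox{span}\{\p_i:i\in\mathcal{I}_{\tilde m}\cap\mathcal{J}_n\}\in\Mcol_n'\subset\Mcol_n$. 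Shrinking the index set gives $D_m\leq D_{m^{*}}$, and the Pythagorean decomposition in the orthonormal basis $\Phi$ yields
\[
\|s-s_m\|^{2}=\|s-s_{m^{*}}\|^{2}+\sum_{i\in\mathcal{I}_{m^{*}}\setminus\mathcal{I}_m}\be_i^{2}\leq\|s-s_{m^{*}}\|^{2}+\|s-P_{V_n}s\|^{2},
\]
because $\mathcal{I}_{m^{*}}\setminus\mathcal{I}_m\subset\mathcal{I}\setminus\mathcal{J}_n$. Using $s\in\mathcal{L}_V^{\al}(R)$ to bound the last term and $\frac{\la_n}{n}D_m\leq\rho_{n,\al}^{2}$, we conclude $Q(s,n)\leq C'R^{2}\rho_{n,\al}^{2}$.

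\emph{From $Q$-control to approximation.} Conversely, assume $\sup_n\rho_{n,\al}^{-2}Q(s,n)\leq R^{2}$. For each $n$ pick a near-minimizer $m_n\in\Mcol_n$ with $\|s-s_{m_n}\|^{2}+\frac{\la_n}{n}D_{m_n}\leq 2R^{2}\rho_{n,\al}^{2}$, so that $\|s-s_{m_n}\|^{2}\leq 2R^{2}M_n^{-2\al}$ and $D_{m_n}\leq 2R^{2}M_n$. Since $m_n\in\Mcol_{k}'$ for some $k\leq n$, one has $\mathcal{I}_{m_n}\subset\mathcal{J}_k\subset\mathcal{J}_n$, hence $m_n\subset V_n$ and $\|s-P_{V_n}s\|\leq\|s-s_{m_n}\|$, which delivers $s\in\mathcal{L}_V^{\al}$. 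For the nonlinear approximation condition, given $M>0$ large enough choose $n$ maximal with $2R^{2}M_n\leq M$ (well-defined since $M_n\to\infty$ by~(\ref{hyplambdalim})). Then $m_n\in\Mcols$ is admissible in the infimum defining $\mathcal{A}_{_{\Mcols}}^{\al}$, and the elementary bound $M_{n+1}\leq 2^{1/(1+2\al)}M_n$, which follows from $(\la_n)_n$ being non-decreasing together with $(n+1)/n\leq 2$, forces $M\leq C M_n$, whence $\|s-s_{m_n}\|^{2}\leq C'M^{-2\al}$; the finite range of small $M$ is handled trivially by $\|s-s_m\|^{2}\leq\|s\|^{2}$.

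\emph{Main obstacle.} The delicate step is the forward direction. The near-optimal approximant $m^{*}$ produced by $\mathcal{A}_{_{\Mcols}}^{\al}$ may live in some $\Mcol_k'$ with $k$ arbitrarily larger than $n$, so it need not be available in the collection $\Mcol_n$ used to form $\hat m$. Controlling the error incurred by trimming its index set down to $\mathcal{J}_n$ is exactly what the linear approximation condition $\mathcal{L}_V^{\al}$ purchases us, and this is where the orthonormality of $\Phi$ enters crucially through the Pythagorean identity. The remainder of the argument is bookkeeping on ball radii and on the range of $M$.
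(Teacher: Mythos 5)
Your proposal is correct and follows essentially the same route as the paper: reduction to the deterministic statement via Corollary~\ref{corollaire}, the linear part from $m\subset V_n$ for $m\in\Mcol_n$, the nonlinear part from matching the penalty level $\lambda_n/n$ with the dimension budget $M_n=(n/\lambda_n)^{1/(1+2\alpha)}$ (the paper isolates this as a separate lemma, which you inline), and the trimming of an $\Mcols$-model down to $\mathcal{J}_n$ controlled by the Pythagorean identity and $\mathcal{L}_V^{\al}$. No substantive differences.
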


\noindent The result pointed out in Theorem \ref{main2} links the
performance of the estimator to an approximation property for the
estimated function.
This approximation property is decomposed into a linear approximation measured by $\mathcal{L}_V^{\al}$ and a  non linear approximation measured by 
$\mathcal{ A}_{_{\Mcols}}^{\al}$. The linear condition is due to the use of the reduced model
collection $\Mcol_n$ instead of $\Mcol$, which is often necessary to
ensure either
the calculability of the estimator or Condition~(\ref{eq:Kraftmainbis}). It plays the role of a minimum regularity
property that is easily satisfied.

\noindent Observe that if we have one model collection, that is for any $k$ and $k'$, $\Mcol_k=\Mcol_{k'}=\Mcol$, $\mathcal{J}_n=
\mathcal{I}$ for any $n$ and thus $\Mcols=\Mcol$. Then \[\mathcal{L}_V^{\al}=\mbox{span}\left\{\p_i:\quad i\in\mathcal{I}\right\}\]
and Theorem \ref{main2} gives
\[MS(\hat s_{\hat m},\rho_\al):=:\mathcal{ A}_{_{\Mcol}}^{\al}.\]

\noindent The spaces $\mathcal{ A}_{_{\Mcols}}^{\al}$ and $ \mathcal{L}_V^{\al}$ highly depend on the models and the approximation space. At first glance, the best choice seems to be $V_n= \L_2(\mathcal{D})$ and\[\mathcal{M} = \{ m:\quad \mathcal{I}_m \subset\mathcal{I}\}\quad\] since the infimum in the definition of $\mathcal{  A}_{_{\Mcols}}^{\al}$ becomes smaller when the collection is enriched.
There is however a price to pay when enlarging the model collection:
the penalty has to be larger to satisfy (\ref{eq:Kraftmainbis}),  which deteriorates the convergence rate.
A second issue comes from the tractability of the minimization~(\ref{eq:minimization})
itself
 which
will further limit the size of the model collection.

\noindent To avoid considering the union of $\Mcol'_k$, that can
dramatically increase
the number of models considered for a fixed $n$, leading to large
penalties, 
we can relax the
assumption that the penalty is proportional to the dimension.  
Namely, for any $n$, for any $m\in \Mcol_n'$, there exists $\tilde m\in 
\Mcol$ such that
\[m=\mbox{span}\left\{\p_i:\quad i\in \mathcal{I}_{\tilde m}\cap \mathcal{J}_n\right\}.\]
Then for any model $m\in\Mcol_n'$, we replace the
dimension $D_m$  by the larger dimension $D_{\tilde m}$ and we set
\[
\widetilde\pen_n(m)= \frac{\la_n}{n} D_{\tilde m}.
\]
The minimization of the corresponding penalized criterion over all
model in $\Mcol_n'$ leads to a result similar to Theorem \ref{main2}.  Mimicking its proof, we can state the following proposition that will be used in Section~\ref{special}:
\begin{Prop} \label{prop:approxmod}
Let $\al_0<\infty$ be fixed. 
Assume (\ref{hyplambdalim}), (\ref{hyplambda})
~(\ref{hyplambda0}) and~(\ref{eq:Kraftmainbis}) are satisfied.
Then,  the penalized  estimator  $\hat s_{\tilde  m}$ where
\[\tilde m=\arg\min_{m\in \mathcal{ M}_n'}\left\{\ga_n(\hat s_m)+\widetilde\pen_n(m)\right\}\] 
satisfies the following result:
for any $\al\in~(0,\al_0]$,
\[
MS(\tilde s_{\tilde m},\rho_\al):=:\mathcal{ A}_{_{\Mcol}}^{\al} \cap
\mathcal{L}_V^{\al}.
\]
\end{Prop}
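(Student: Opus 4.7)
The plan is to mimic the two-step proof of Theorem~\ref{main2}, systematically replacing $D_m$ by $D_{\tilde m}$ in the penalty and exploiting the Pythagorean identity
$\|s-s_m\|^2=\|s-P_{V_n}s\|^2+\|P_{V_n}s-s_m\|^2$
together with $\|P_{V_n}s-s_m\|\leq \|s-s_{\tilde m}\|$, both of which hold for $m=\mathrm{span}\{\p_i:\,i\in \mathcal{I}_{\tilde m}\cap\mathcal{J}_n\}$ because $\Phi$ is orthonormal. This decomposes the bias into a linear term controlled by $\mathcal{L}_V^\al$ and a nonlinear one controlled by $\mathcal{A}_{_{\Mcol}}^\al$, which is exactly the two spaces appearing in the maxiset.

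For the inclusion $\mathcal{A}_{_{\Mcol}}^\al\cap\mathcal{L}_V^\al\subset MS(\hat s_{\tilde m},\rho_\al)$, I apply Theorem~\ref{massart} with $x_m=(\sqrt{\ka^{-1}\la_n}-1)^2 D_{\tilde m}/2$. Condition (\ref{minpen}) is then satisfied by $\widetilde\pen_n$ because $D_m\leq D_{\tilde m}$, and the Kraft sum $\sum_{m\in\Mcol_n'}e^{-x_m}$ is dominated by $\sum_{\tilde m\in\Mcol}e^{-(\sqrt{\ka^{-1}\la_n}-1)^2 D_{\tilde m}/2}$, which is finite by (\ref{eq:Kraftmainbis}). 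I then bound the oracle-type right-hand side: I inject the Pythagorean decomposition above, absorb $\|s-P_{V_n}s\|^2$ via membership in $\mathcal{L}_V^\al$, and optimize the remaining $\inf_{\tilde m\in\Mcol}\{\|s-s_{\tilde m}\|^2+\frac{\la_n}{n}D_{\tilde m}\}$ through the standard balance $\frac{\la_n}{n}M\sim M^{-2\al}$ at $M\sim(n/\la_n)^{1/(1+2\al)}$, which yields the target rate $\rho_{n,\al}^2$; the residual $(1+\Sigma_n)/n$ is dominated by $\rho_{n,\al}^2$ since $n^{-1}=o(\rho_{n,\al}^2)$ under (\ref{hyplambdalim}).

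For the converse inclusion $MS(\hat s_{\tilde m},\rho_\al)\subset \mathcal{A}_{_{\Mcol}}^\al\cap\mathcal{L}_V^\al$, I adapt the lower-bound argument of Theorem~\ref{main} with $Q(s,n)$ replaced by $\tilde Q(s,n)=\inf_{m\in\Mcol_n'}\{\|s-s_m\|^2+\frac{\la_n}{n}D_{\tilde m}\}$. The key point, and the reason the embedding hypothesis (\ref{hyplambdaembed}) becomes unnecessary, is that $\widetilde\pen_n(m)\geq\frac{\la_n}{n}D_m$ already overpenalizes enough to play the role of the inflated penalty used in Theorem~\ref{main}: conceptually the selected model has the ``true'' complexity $D_{\tilde m}$. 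Once $\sup_n\rho_{n,\al}^{-2}\tilde Q(s,n)<\infty$ is obtained, I pick for each $n$ a near-minimizer $m_n\in\Mcol_n'$ with its associated $\tilde m_n\in\Mcol$; the Pythagorean identity gives $\|s-P_{V_n}s\|\leq\|s-s_{m_n}\|\lesssim\rho_{n,\al}$, proving $s\in\mathcal{L}_V^\al$, while $\|s-s_{\tilde m_n}\|\leq\|s-s_{m_n}\|\lesssim\rho_{n,\al}$ and $D_{\tilde m_n}\lesssim(n/\la_n)^{1/(1+2\al)}$ are the two ingredients controlling nonlinear approximation.

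To convert the discrete sequence $(\tilde m_n)_n$ into the uniform bound defining $\mathcal{A}_{_{\Mcol}}^\al$, I invoke (\ref{hyplambda}): $\la_{2n}\leq 2(1-\de)\la_n$ forces $(n/\la_n)^{1/(1+2\al)}$ to grow within a bounded factor when $n$ is doubled, so for any large $M>0$ one can select $n$ with $D_{\tilde m_n}\leq M\leq C\,D_{\tilde m_n}$, yielding $\inf_{\tilde m\in\Mcol,\,D_{\tilde m}\leq M}\|s-s_{\tilde m}\|\lesssim M^{-\al}$. The main obstacle I expect is precisely verifying that the lower-bound proof of Theorem~\ref{main} can be rewritten without (\ref{hyplambdaembed}), by checking that each use of embedding in the original argument can be rerouted through the inflated penalty $D_{\tilde m}$; once this is settled, the rest is a mechanical transcription of the proof of Theorem~\ref{main2}.
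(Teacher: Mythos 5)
Your proposal is correct and follows essentially the same route as the paper: both reduce the deterministic quantity with the inflated penalty to $\inf_{\tilde m\in\Mcol}\{\|s-s_{\tilde m}\|^2+\tfrac{\la_n}{n}D_{\tilde m}\}$ via the key inequality $\|s-s_{\tilde m}\|\le\|s-s_m\|$ (since $m\subset\tilde m$), split the bias by Pythagoras into the $\mathcal{L}_V^\al$ and $\mathcal{A}_{_{\Mcol}}^\al$ parts, and observe that the induction of Theorem~\ref{main} can be run over the fixed collection $\Mcol$ so that Assumption~(\ref{hyplambdaembed}) is no longer needed. The point you flag as the main obstacle is exactly the step the paper itself dispatches with ``mimicking the proof of Theorem~\ref{main2}.''
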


\noindent Remark that $\mathcal{M}_n$,  $\mathcal{L}_V^{\al}$ and $\mathcal{
  A}_{_{\Mcols}}^{\al}$ can be defined in a similar fashion for any
arbitrary dictionary $\Phi$. However, one
can only obtain the inclusion $MS(\hat s_{\hat m},\rho_\al) \subset \mathcal{ A}_{_{\Mcols}}^{\al} \cap
\mathcal{L}_V^{\al}$ in the general case.
\section{Comparisons of model selection estimators}\label{part3}
 The aim of this section is twofold. Firstly, we propose to illustrate our previous maxiset results to different model selection estimators built with wavelet methods by identifying precisely the spaces  $\mathcal{ A}_{_{\Mcols}}^{\al}$ and $\mathcal{L}_V^{\al}$. Secondly,  comparisons
between the performances of these estimators are provided and discussed.

\noindent We briefly recall the construction of periodic wavelets bases of the interval $[0,1]$. 
 Let $\phi$ and $\psi$
be  two compactly supported functions of $\mathbb{L}_2(\mathbb{R})$ and denote
for all $j \in \mathbb{N}
$, all $k\in \mathbb{Z} \mbox{ and all } x\in \mathbb{R}$,
$\phi_{jk}(x)=2^{^{j/2}}\phi(2^{^j}x-k)$ and  $\psi_{jk}(x)=2^{^{j/2}}\psi(2^{^j}x-k)$. 
Those functions can be periodized in such a way that
 \[\Psi=\{\phi_{00}, \psi_{jk}:\quad  j\geq 0, \ k \in \{0,\dots,2^j-1\}\}\] constitutes an
orthonormal basis of $\L_{2}([0,1])$.  Some popular examples of such
bases
are given in \cite{daub}.
The function $\phi$ is called the scaling function and $\psi$ the
corresponding wavelet. Any periodic function $s \in \mathbb{L}_2([0,1])$  can be represented as:
\[
s= \alpha_{00} \phi_{00} + \sum_{j=0}^{\infty}\sum_{k=0}^{2^j-1}\beta_{jk}\psi_{jk}
\]
 where
\[\alpha_{00}=\begin{displaystyle}\int\end{displaystyle}_{[0,1]}s(t)\phi_{00}(t)dt\]
and for any 
$j   \in    \mathbb{N}$   and   for   any   $k    \in   \{0,\dots,2^j-1\}$
\[\beta_{jk}=\begin{displaystyle}\int\end{displaystyle}_{[0,1]}s(t)\psi_{jk}(t)dt.\]
Finally, we  recall the  characterization of Besov  spaces using wavelets.
Such spaces will  play an important role in the following. In this section
we assume that 
the multiresolution analysis associated with
the basis $\Psi$ is $r$-regular with $r\geq 1$ as defined in~\cite{mey}.  In this case, for any $0<\al<r$
and any $1\leq p,q\leq\infty$, the periodic function $s$ belongs to the Besov space
$\mathcal{B}^\al_{p,q}$ if and only if 
$|\alpha_{00}|<\infty$ and
\[\sum_{j=0}^\infty 2^{jq(\al+\frac{1}{2}-\frac{1}{p})}\|\beta_{j.}\|_{\ell_p}^q<\infty\quad
\mbox{if } q<\infty,
\]
\[\sup_{j\in \mathbb{N}}2^{j(\al+\frac{1}{2}-\frac{1}{p})}\|\beta_{j.}\|_{\ell_p}<\infty\quad
\mbox{if } q=\infty\]
where $(\beta_{j.})=(\beta_{jk})_k$. This characterization allows to recall the following embeddings:
\[\mathcal{B}^\al_{p,q}\subsetneq \mathcal{B}^{\al'}_{p',q'} \mbox{ as
  soon as } \al-\frac{1}{p}\geq \alpha'-\frac{1}{p'}, \ p < p' \mbox{ and }  q\leq q'\]
and
\[\mathcal{B}^\al_{p,\infty}\subsetneq\mathcal{B}^{\al}_{2,\infty}\mbox{
  as soon as }p > 2.\]
\subsection{Collection of Sieves}\label{nested}
We consider first a single model collection corresponding to a class of nested models
\begin{equation*}\label{colnested}
\Msievecol = \{m=\mbox{span}\{\phi_{00}, \psi_{jk}: \: \: j < N_m, 0\leq k <2^j\}: \: \: N_m \in \mathbb{N}\}.
\end{equation*}
For  such a
model collection,  Theorem~\ref{main2}
could be applied with $V_n=\L_2$. One can even remove
Assumption~\eqref{hyplambda0} which imposes a minimum value on
$\lambda_{n_0}$ that depends on the rate $\rho_\alpha$:
\begin{Prop}\label{nestedtheo}
Let $0<\alpha<r$ and let $\hat{s}^{(s)}_{\hat{m}}$ be the model
selection estimator associated with the model collection
$\Msievecol$. Then, under
Assumptions~\eqref{hyplambdalim},~\eqref{hyplambda} and~\eqref{eq:Kraftmainbis},
\[
MS(\hat{s}^{(s)}_{\hat{m}},\rho_\al):=:\mathcal{B}^{\alpha}_{2,\infty}.
\]
\end{Prop}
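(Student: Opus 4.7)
The plan is to reduce the statement to identifying the approximation space $\mathcal{A}^{\al}_{\Msievecol}$ with the Besov ball $\mathcal{B}^{\al}_{2,\infty}$, and to justify separately that Assumption~\eqref{hyplambda0} can be dropped so that the conclusion extends to every $0 < \al < r$ rather than only $\al \leq \al_0$.

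First, since $\Msievecol$ does not depend on $n$, the embedding condition~\eqref{hyplambdaembed} is automatic with $\Mcol_n = \Msievecol = \Msievecols$, and I would take $\mathcal{J}_n = \mathcal{I}$ so that $V_n = \L_2([0,1])$ and the linear approximation space $\mathcal{L}_V^{\al}$ equals $\L_2([0,1])$, imposing no condition on $s$. Theorem~\ref{main2} would then give $MS(\hat{s}^{(s)}_{\hat{m}},\rho_\al) :=: \mathcal{A}^{\al}_{\Msievecol}$, at least for $\al \in (0,\al_0]$, so it remains to identify $\mathcal{A}^{\al}_{\Msievecol}$ with $\mathcal{B}^{\al}_{2,\infty}$ and to extend the range of $\al$.

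For the identification I would use the fact that the sieve is nested: recalling that a model with resolution $N_m$ has dimension $D_m = 2^{N_m}$, for any $M$ with $2^N \leq M < 2^{N+1}$,
\[
\inf_{\{m \in \Msievecol:\,D_m \leq M\}} \|s_m - s\|^2 \;=\; \sum_{j \geq N} \sum_{k=0}^{2^j - 1} \beta_{jk}^2.
\]
Combining this with the wavelet characterization $\|s\|_{\mathcal{B}^{\al}_{2,\infty}} \asymp |\alpha_{00}| + \sup_{j \geq 0} 2^{j\al}\|\beta_{j.}\|_{\ell_2}$, a geometric-series argument (using $\al > 0$) shows that $\sup_N 2^{N\al}\bigl(\sum_{j \geq N} \|\beta_{j.}\|_{\ell_2}^2\bigr)^{1/2} < \infty$ if and only if $\sup_j 2^{j\al}\|\beta_{j.}\|_{\ell_2} < \infty$, with explicit constants linking the two balls. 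Hence $\mathcal{A}^{\al}_{\Msievecol} :=: \mathcal{B}^{\al}_{2,\infty}$.

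Finally, to relax Assumption~\eqref{hyplambda0} and extend the conclusion to all $\al \in (0, r)$ I would treat the two inclusions separately. For $\mathcal{B}^{\al}_{2,\infty} \subset MS(\hat{s}^{(s)}_{\hat{m}}, \rho_\al)$, I would apply Theorem~\ref{massart} directly (which does not invoke~\eqref{hyplambda0}), after bounding $Q(s,n)$ by picking the sieve model with $2^{N_m} \asymp (n/\la_n)^{1/(1+2\al)}$ to balance bias and penalty; this yields $Q(s,n) \lesssim \rho_{n,\al}^2$. The main obstacle is the reverse inclusion: revisiting the proof of Theorem~\ref{main}, Assumption~\eqref{hyplambda0} is used only to dominate a worst-case Kraft-type complexity term that depends on the number of models of each given dimension. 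In the sieve setting this number is uniformly $1$, so the relevant sum collapses into a fast-converging geometric series $\sum_{N} e^{-c\,2^N}$ for every $c>0$, and the argument in the proof of Theorem~\ref{main} carries through for every $0 < \al < r$ without any lower bound on $\la_{n_0}$, giving $MS(\hat{s}^{(s)}_{\hat{m}},\rho_\al) \subset \mathcal{A}^{\al}_{\Msievecol} = \mathcal{B}^{\al}_{2,\infty}$ and completing the proof.
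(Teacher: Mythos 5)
Your identification of $\mathcal{A}^{\al}_{\Msievecol}$ with $\mathcal{B}^{\al}_{2,\infty}$ via tail sums of squared wavelet coefficients is correct and is what the result needs, and the inclusion $\mathcal{B}^{\al}_{2,\infty}\subset MS(\hat{s}^{(s)}_{\hat{m}},\rho_\al)$ via Theorem~\ref{massart} applied to a bias--penalty balancing sieve model is also fine. The gap is in your reverse inclusion, where you misidentify the role of Assumption~\eqref{hyplambda0}. That assumption is not there to control a Kraft-type sum over models of a given dimension (that is the separate Assumption~\eqref{eq:Kraftmain}, which is indeed trivially satisfied for the sieve). It enters the induction of Lemma~\ref{induction} because Lemma~\ref{lem:lemmastart} only bounds $\|s_{m_0}-s\|^2$ by a multiple of $\E[\|\hat s_{\hat m}-s\|^2]$ plus an additive term $\left(\frac{K(2\gamma^{-1}+1)}{\tilde K\P\{A_n\}}+2K\gamma\lambda_n\right)\frac{D_{m_0}}{Kn}$; the first summand carries no factor $\lambda_n$, and absorbing it into the penalty term $\frac{\lambda_n D_{m_0}}{Kn}$ during the induction is precisely what forces $\lambda_{n_0}\geq\Upsilon(\de,p,\al_0)$. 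Making the Kraft sum a rapidly converging series only pushes $\P\{A_n\}$ toward $1$; even with $p=1$ the threshold $\Upsilon(\de,p,\al_0)$ remains strictly positive and depends on $\al_0$, so ``carrying through the argument of Theorem~\ref{main}'' neither removes~\eqref{hyplambda0} nor extends the admissible range of $\al$.

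The paper instead replaces the probabilistic Lemma~\ref{lem:lemmastart} by a deterministic comparison (Lemma~\ref{initnest}) that exploits the nestedness of the sieve rather than the scarcity of models: any two models of $\Msievecol$ are comparable, so either $\hat m(n)\subset m_0(n)$, in which case $\|s_{m_0(n)}-s\|\leq\|s_{\hat m(n)}-s\|\leq\|\hat s_{\hat m(n)}-s\|$ directly, or $m_0(n)\subset\hat m(n)$, in which case combining the two defining minimizations (with the penalty of $m_0$ divided by $K=4$) yields $\|s_{\hat m(n)\setminus m_0(n)}\|^2\leq\frac14\|\hat s_{\hat m(n)\setminus m_0(n)}\|^2$ and hence, almost surely, $\|s_{m_0(n)}-s\|^2\leq\|\hat s_{\hat m(n)}-s\|^2$ with constant $1$ and no additive remainder. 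With this clean bound the induction has no constant term left to absorb, so no lower bound on $\lambda_{n_0}$ is required and the constant in the induction may depend on $\al$ freely, which is exactly what gives the conclusion for every $0<\al<r$. Without this (or an equivalent) deterministic comparison your proof of the inclusion $MS(\hat{s}^{(s)}_{\hat{m}},\rho_\al)\subset\mathcal{B}^{\al}_{2,\infty}$ is incomplete.
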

\noindent Remark that it suffices to choose $\lambda_n\geq\lambda_0$ with
$\lambda_0$, independent of $\alpha$, large enough to ensure Condition~\eqref{eq:Kraftmainbis}.

\noindent It is important to notice that the estimator $\hat{s}^{(s)}_{\hat{m}}$ cannot be computed in practice 
because to determine the best model $\hat{m}$ one needs to consider an
infinite number of models, which cannot be done without computing an
infinite number of wavelet coefficients. To overcome this issue, we specify a 
maximum
resolution level $j_0(n)$ for estimation where $n\mapsto j_0(n)$ is
non-decreasing. This modification is also in
the scope of
Theorem~\ref{main2}: it corresponds to
\[V_n=\mbox{span}\{\phi_{00}, \psi_{jk}: \: \: 0 \leq j < j_0(n), \:
0\leq k <2^j\}\] and the  model collection $\Msievecol_n$ defined as follows:
\begin{eqnarray*}
\Msievecol_n&=&\Msievecolprime_n=\{m \in \Msievecol: \: N_m<j_0(n)\}.
\end{eqnarray*}

\noindent  For the specific choice 
\begin{equation}\label{j0trunc} 2^{j_0(n)} \leq n\lambda_n^{-1}< 2^{j_0(n)+1},
\end{equation}
\noindent we obtain:
\begin{align*} 
\mathcal{L}^\alpha_{V}&=\{s=\alpha_{00} \phi_{00} + \sum_{j= 0}^\infty\sum_{k=0}^{2^j-1}\beta_{jk}\psi_{jk}\in\L_2: \: \sup_{n \in \mathbb{N}^*} \: 2^{\frac{2 j_0(n) \alpha}{1+2\alpha}}\|s-P_{V_n}s\|^2<\infty\}\\
&=\{s=\alpha_{00} \phi_{00} + \sum_{j= 0}^\infty\sum_{k=0}^{2^j-1}\beta_{jk}\psi_{jk}\in\L_2: \: \sup_{n \in \mathbb{N}^*} \: 2^{\frac{2 j_0(n) \alpha}{1+2\alpha}}\sum_{j\geq j_0(n)}\sum_k\beta_{jk}^2<\infty\}\\
&=\mathcal{B}^{\frac{\alpha}{1+2\alpha}}_{2,\infty}.
\end{align*}
Since $\mathcal{B}^{\frac{\alpha}{1+2\alpha}}_{2,\infty}
\cap \mathcal{B}^{\alpha}_{2,\infty}$ reduces to
$\mathcal{B}^{\alpha}_{2,\infty}$, arguments of the proofs of Theorem~\ref{main2} and Proposition~\ref{nestedtheo} give:
\begin{Prop}\label{larms0}
Let $0<\alpha<r$ and let $\hat{s}^{(st)}_{\hat{m}}$ be the model selection estimator associated with the model collection $\Msievecol_n$. Then, under Assumptions \eqref{hyplambdalim}, \eqref{hyplambda} and \eqref{eq:Kraftmainbis}
\[MS(\hat{s}^{(st)}_{\hat{m}},\rho_\al):=:\mathcal{B}^{\alpha}_{2,\infty}.\]
\end{Prop}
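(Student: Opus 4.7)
The plan is to combine the general maxiset identification of Theorem~\ref{main2} with the explicit computation of $\mathcal{L}_V^\alpha$ already carried out in the preamble of the statement, and then to identify the nonlinear approximation space $\mathcal{A}_{\Msievecols}^\alpha$ via the wavelet characterization of Besov spaces.

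First I would verify that the truncated sieve collection $\Msievecol_n = \Msievecolprime_n = \{m\in\Msievecol:N_m<j_0(n)\}$ fits the framework of Section~\ref{basis}. It is obtained by restricting $\Msievecol$ to the wavelet indices $\mathcal{J}_n=\{(0,0)\}\cup\{(j,k):0\leq j<j_0(n),\ 0\leq k<2^j\}$. Since $n\mapsto j_0(n)$ is non-decreasing, the embedding $\Msievecol_n\subset\Msievecol_{n+1}$ of~\eqref{hyplambdaembed} holds, and $\Msievecols=\bigcup_n\Msievecol_n=\Msievecol$. Moreover, each $\Msievecol_n$ remains a totally ordered (nested) collection of models, a feature inherited from $\Msievecol$ and crucial for the next step.

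Next I would invoke Theorem~\ref{main2} to get
\[
MS(\hat s^{(st)}_{\hat m},\rho_\al):=:\mathcal{A}_{\Msievecol}^\alpha\cap\mathcal{L}_V^\alpha.
\]
Here one must also drop the extra assumption~\eqref{hyplambda0}, just as in Proposition~\ref{nestedtheo}: the portion of the proof of Theorem~\ref{main} that required a lower bound on $\lambda_{n_0}$ depending on $\alpha_0$ can, for nested collections, be replaced by a direct telescoping/peeling argument that exploits the total ordering of the models and bypasses the uniform bound. This is the main technical point, and it is the only non-routine step; it is handled verbatim as in the proof of Proposition~\ref{nestedtheo}.

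It then remains to identify the two approximation sets. The computation $\mathcal{L}_V^\alpha=\mathcal{B}^{\alpha/(1+2\alpha)}_{2,\infty}$ is already given in the paragraph preceding the statement, using~\eqref{j0trunc} and the wavelet characterization of Besov spaces. For $\mathcal{A}_{\Msievecol}^\alpha$, note that the dimension of the sieve model indexed by $N_m$ equals $2^{N_m}$, so the best model of dimension $\leq M$ is the multiresolution space at level $N(M)=\lfloor\log_2 M\rfloor$, and its squared approximation error is $\sum_{j\geq N(M)}\sum_k\beta_{jk}^2$. Hence $s\in\mathcal{A}_{\Msievecol}^\alpha$ if and only if $\sup_{N\in\mathbb{N}}2^{2N\alpha}\sum_{j\geq N}\sum_k\beta_{jk}^2<\infty$, which is exactly the wavelet characterization of $\mathcal{B}^\alpha_{2,\infty}$.

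Finally, since $\alpha>\alpha/(1+2\alpha)$ whenever $\alpha>0$, the Besov embedding gives $\mathcal{B}^\alpha_{2,\infty}\subset\mathcal{B}^{\alpha/(1+2\alpha)}_{2,\infty}$, so $\mathcal{A}_{\Msievecol}^\alpha\cap\mathcal{L}_V^\alpha=\mathcal{B}^\alpha_{2,\infty}$, which yields the claim.
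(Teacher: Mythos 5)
Your proposal is correct and follows essentially the same route as the paper, which itself only says that the result follows from the computation $\mathcal{L}_V^{\al}=\mathcal{B}^{\al/(1+2\al)}_{2,\infty}$ together with the arguments of Theorem~\ref{main2} and Proposition~\ref{nestedtheo} (the nested-collection argument via Lemma~\ref{initnest} being exactly what lets you discard Assumption~\eqref{hyplambda0}). Your explicit identification of $\mathcal{A}_{\Msievecol}^{\al}$ with $\mathcal{B}^{\al}_{2,\infty}$ through the dimension count $D_m=2^{N_m}$ and the tail-sum characterization, and the final intersection step, match what the paper leaves implicit.
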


\noindent This tractable procedure is thus as efficient as the original one. 
We  obtain   the  maxiset  behavior   of  the  non  adaptive   linear  wavelet
procedure pointed out in \cite{riv-a} but here the procedure is completely data-driven.
\subsection{The largest model collections}\label{strut}
In this paragraph we enlarge the model collections in order
to obtain much larger maxisets.
We start with the following
model collection
\[
\Mwealthcol = \{m=\mbox{span}\{\phi_{00}, \psi_{jk}: \: \: (j,k) \in
\mathcal{I}_m\}: \: \mathcal{I}_m \in \mathcal{P}(\mathcal{I}) \}
\]
where 
\[
\mathcal{I}=\bigcup_{j\geq 0} \{ (j,k):\quad k \in \{0,1,\dots,2^j-1\}\}
\]
 and
$\mathcal{P}(\mathcal{I})$ is the set of all subsets of $\mathcal{I}$.
This model collection is so rich that whatever the
sequence $(\lambda_n)_n$, Condition~\eqref{eq:Kraftmainbis} (or even
Condition~\eqref{kraft}) is not satisfied.
To reduce the cardinality of the collection, we restrict the maximum
resolution level to the resolution level $j_0(n)$ defined in (\ref{j0trunc}) and consider
the collections $\Mwealthcol_n$ defined from
$\Mwealthcol$  by
\begin{eqnarray*}
\Mwealthcol_n&=&\Mwealthcolprime_n=\left\{m \in \Mwealthcol:\quad \mathcal{I}_m \in \mathcal{P}(\mathcal{I}^{j_0})
\right\}
\end{eqnarray*}
where
\[
 \mathcal{I}^{j_0} =
    \bigcup_{0 \leq j < j_0(n)} \{ (j,k): \quad k \in \{0,1,\dots,2^j-1\}\}.
\]

\noindent Remark that this corresponds to the same choice of $V_n$ as in the
previous paragraph and the corresponding estimator fits perfectly within the framework of Theorem~\ref{main2}.

\noindent 
The classical logarithmic penalty 
\begin{eqnarray*}\label{stpen}
\pen_n(m)=\frac{\lambda_0 \log(n) D_m}{n},
\end{eqnarray*}
which corresponds to $\lambda_n=\lambda_0 \log(n)$,
 is sufficient to ensure
Condition (\ref{eq:Kraftmainbis}) as soon as $\lambda_0$ is a constant large enough 
(the choice $\lambda_n=\lambda_0$ is not sufficient). 
The identification of  the
corresponding maxiset 
focuses on the characterization of
the space $\mathcal{A}_{{\Mwealthcol}}^{\al}$  since,
as previously,
$\mathcal{L}_V^{\al}=\mathcal{B}^{\frac{\alpha}{1+2\alpha}}_{2,\infty}$.
We rely on  sparsity properties of
$\mathcal{A}_{{\Mwealthcol}}^{\al}$.
 In our  context, sparsity means that  there is a
 \emph{small} proportion  of \emph{large} coefficients of a
signal. Let introduce for, for $n\in\Ne^*$, the notation
\[|\beta|_{(n)}=\inf\left\{u: \quad \mbox{card}\left\{(j,k)\in \mathbb{N}\times\{0,1,\dots,2^j-1\}: \ |\beta_{jk}|>u\right\}<n\right\}\]
to represent the non-increasing rearrangement of the wavelet coefficient of a periodic signal $s$:
\[|\beta|_{(1)}\geq |\beta|_{(2)}\geq\cdots\geq|\beta|_{(n)}\geq\cdots.\]
As the best model $m\in\Mwealthcol$ of prescribed dimension $M$ is
obtained by choosing the subset of index corresponding to the $M$ largest
wavelet coefficients,  
a simple identification of the space $\mathcal{A}_{{\Mwealthcol}}^{\al}$ is
\[\mathcal{
  A}_{{\Mwealthcol}}^{\al}=
  \left\{s=\alpha_{00} \phi_{00} + \sum_{j= 0}^\infty\sum_{k=0}^{2^j-1}\beta_{jk}\psi_{jk}\in\L_2:\quad
  \sup_{M\in\Ne^*}\: M^{2\alpha}\sum_{i=M+1}^{\infty}|\beta|_{(i)}^2 <\infty\right\}.\]
Theorem 2.1 of \cite{kp-a} provides a characterization of this space as a
weak Besov space:
\[\mathcal{  A}_{{\Mwealthcol}}^{\al}=\mathcal{W}_{\frac{2}{1+2\al}}\]
with for any  $q \in ]0,2[$,
\begin{eqnarray*}\label{WB1}
\mathcal{W}_q=\left\{s=\alpha_{00} \phi_{00} + \sum_{j= 0}^\infty\sum_{k=0}^{2^j-1}\beta_{jk}\psi_{jk}\in\L_2:\quad
\sup_{n\in\Ne^*}n^{1/q}|\beta|_{(n)}<\infty\right\}. 
\end{eqnarray*}
Following their definitions, the larger  $\al$, the smaller $q=2/(1+2\al)$ and  the sparser  the sequence  $(\beta_{jk})_{j,k}$. 
Lemma 2.2 of \cite{kp-a} shows that the spaces $\mathcal{W}_q$ ($0<q<2$) have other
characterizations in terms of wavelet coefficients:
\begin{eqnarray*}\label{WB2}
 \mathcal{W}_q&=&\left\{s=\alpha_{00} \phi_{00} + \sum_{j= 0}^\infty\sum_{k=0}^{2^j-1}\beta_{jk}\psi_{jk}\in\L_2:\quad
 \sup_{u>0}u^{q-2}\sum_{j}\sum_k\beta_{jk}^21_{|\beta_{jk}|\leq u}<\infty\right\}\\
&=&\left\{s=\alpha_{00} \phi_{00} + \sum_{j= 0}^\infty\sum_{k=0}^{2^j-1}\beta_{jk}\psi_{jk}\in\L_2:\quad
 \sup_{u>0}u^{q}\sum_{j}\sum_k1_{|\beta_{jk}|> u}<\infty\right\}\nonumber.
\end{eqnarray*}
We obtain thus the following proposition.
\begin{Prop}\label{larms1}
Let $\alpha_0<r$ be fixed, let $0<\alpha\leq \alpha_0$ and let $\hat{s}^{(l)}_{\hat{m}}$ be the model selection estimator associated with the model collection $\Msievecol_n$. Then, under Assumptions \eqref{hyplambdalim}, \eqref{hyplambda}, \eqref{hyplambda0} and \eqref{eq:Kraftmainbis}:
\[
MS\left(\hat{s}^{(l)}_{\hat{m}},\rho_{\alpha}\right):=:\mathcal{B}^{\frac{\al}{1+2\al}}_{2,\infty} \cap \mathcal{W}_\frac{2}{1+2\al}. 
\]
\end{Prop}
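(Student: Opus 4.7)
The plan is to derive this proposition as a direct corollary of Theorem~\ref{main2} applied to the collection $\Mwealthcol_n$ (the symbol $\Msievecol_n$ in the statement appears to be a typo for $\Mwealthcol_n$, which is the subject of this subsection) with the choice $V_n=\mathrm{span}\{\phi_{00},\psi_{jk}:0\leq j<j_0(n),\,0\leq k<2^j\}$ and $\lambda_n=\lambda_0\log(n)$ for a large enough $\lambda_0$. Theorem~\ref{main2} then yields $MS(\hat{s}^{(l)}_{\hat m},\rho_\al):=:\mathcal{A}_{\Mwealthcols}^\al\cap\mathcal{L}_V^\al$, and what remains is the identification of the two factors.

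First I would check the hypotheses of Theorem~\ref{main2}. Conditions~(\ref{hyplambdalim}), (\ref{hyplambda}) and~(\ref{hyplambda0}) follow directly from the monotonicity and logarithmic growth of $\lambda_n$ as soon as $\lambda_0$ is large enough. For the Kraft-type condition~(\ref{eq:Kraftmainbis}), I would group models in $\Mwealthcol_n$ by their dimension $D$; since $\Mwealthcol_n$ consists of all subsets of $\mathcal{I}^{j_0(n)}$ and $|\mathcal{I}^{j_0(n)}|=2^{j_0(n)}\leq n/\lambda_n$, the bound $\binom{2^{j_0(n)}}{D}\leq 2^{j_0(n)D}$ gives
\[
\sum_{m\in\Mwealthcol_n}\!\exp\!\Big(\!-\tfrac{(\sqrt{\kappa^{-1}\lambda_n}-1)^2 D_m}{2}\Big) \leq \sum_{D\geq 0}\exp\!\Big(D\bigl(j_0(n)\log 2-\tfrac{(\sqrt{\kappa^{-1}\lambda_n}-1)^2}{2}\bigr)\Big).
\]
Since $j_0(n)\log 2\leq \log n$ while $\lambda_n=\lambda_0\log n$, the exponent is negative and bounded away from $0$ for $\lambda_0$ large, so the series converges geometrically with sum uniformly smaller than $\sqrt{1-p}$.

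Next I would identify $\mathcal{L}_V^\al$: the space $V_n$ is the same as in the paragraph preceding Proposition~\ref{larms0}, so the computation carried out there applies verbatim and delivers $\mathcal{L}_V^\al=\mathcal{B}^{\al/(1+2\al)}_{2,\infty}$. For the nonlinear approximation space $\mathcal{A}_{\Mwealthcols}^\al$, I would observe that $\Mwealthcols=\bigcup_n\Mwealthcol_n$ contains every \emph{finite} subset of $\mathcal{I}$ (since $j_0(n)\to\infty$), so the best $\L_2$-approximant of $s$ in $\Mwealthcols$ with dimension at most $M$ is obtained by selecting the indices of the $M$ largest wavelet coefficients of $s$; thus
\[
\inf_{m\in\Mwealthcols,\,D_m\leq M}\|s_m-s\|^2=\sum_{i>M}|\beta|_{(i)}^2,
\]
and $\mathcal{A}_{\Mwealthcols}^\al=\{s\in\L_2:\sup_M M^{2\al}\sum_{i>M}|\beta|_{(i)}^2<\infty\}$. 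Invoking Theorem~2.1 of \cite{kp-a}, already quoted in the paper, identifies this set with the weak Besov space $\mathcal{W}_{2/(1+2\al)}$, giving the claimed characterization.

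The genuinely nontrivial step is the uniform Kraft-type estimate: it is precisely what forces the logarithmic penalty factor (and hence the logarithmic factor hidden in $\rho_{n,\al}$), and it exploits the delicate balance between the combinatorial explosion $2^{j_0(n)}\lesssim n/\lambda_n$ of the model collection and the size of the penalty $\lambda_n\asymp\log n$. The remaining identifications are either direct applications of Theorem~\ref{main2} or recalls of approximation-theoretic facts already stated in the paper.
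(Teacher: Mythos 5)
Your proposal is correct and follows essentially the same route as the paper: apply Theorem~\ref{main2} with $V_n$ truncated at level $j_0(n)$, identify $\mathcal{L}_V^{\al}$ with $\mathcal{B}^{\al/(1+2\al)}_{2,\infty}$ exactly as in the sieve case, and identify $\mathcal{A}_{\Mwealthcol}^{\al}$ with $\mathcal{W}_{2/(1+2\al)}$ via the largest-coefficients argument and Theorem~2.1 of \cite{kp-a}; you also correctly spotted that $\Msievecol_n$ in the statement should read $\Mwealthcol_n$. The only cosmetic point is that your geometric series for the Kraft condition should start at $D=1$ (the $D=0$ term alone already exceeds $\sqrt{1-p}$), but this does not affect the argument.
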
 
  
\noindent Observe that the estimator $\hat s_{\hat m}^{(l)}$ is easily tractable from a computational
point of view as  the minimization can be rewritten coefficientwise:
\begin{eqnarray*}
\hat{m}(n)&=&
\argmin_{m\in\Mwealthcol_n}  \left\{\gamma_n(\hat s_{m}) + \frac{\lambda_n}{n} D_m\right\}\\
&=& \argmin_{m\in\Mwealthcol_n} \left\{\sum_{j=0}^{j_0(n)-1}\sum_{k=0}^{2^j-1} \left( \hat \beta_{jk}^2 \mathbf{1}_{(j,k)\notin   \mathcal{I}_m}  +
  \frac{\lambda_n}{n}
\mathbf{1}_{(j,k) \in \mathcal{I}_m}\right)\right\}.\end{eqnarray*}
The best subset $\mathcal{I}_{\hat m}$ is thus the set $\{ (j,k) \in
  \mathcal{I}^{j_0}:\quad |\hat\beta_{jk}|>\sqrt{\lambda_n/n}\}$
  and $\hat s^{(l)}_{\hat m}$ corresponds to the well-known hard thresholding estimator,
\[\hat{s}^{(l)}_{\hat{m}}=\hat{\alpha}_{00}\phi_{00}+\sum_{j=0}^{j_0(n)-1}\sum_{k=0}^{2^j-1}\hat{\beta}_{jk}\mathbf{1}_{_{|\hat{\beta}_{jk}|>\sqrt{\frac{\lambda_n}{n}}}} \ \psi_{jk}.
\]
  Proposition \ref{larms1} corresponds thus to the maxiset result
  established by Kerkyacharian and Picard\cite{kp-a}.
\subsection{A special strategy for Besov spaces}\label{special}
We consider now the model collection proposed by Massart~\cite{mas}. This collection can be viewed as an hybrid collection between the collections of Sections~\ref{nested} and~\ref{strut}. This strategy turns 
out to be minimax for all Besov spaces
$\mathcal{B}^{\alpha}_{p,\infty}$ when $\alpha > \max(1/p -
1/2,0)$ and $1\leq p \leq \infty$.

\noindent More precisely, for a chosen $\theta>2$, define the model collection by 
\begin{eqnarray*}\label{lastmodel}
\Mstratcol = \{ m=\mbox{span}\{\phi_{00}, \psi_{jk}: \: \: (j,k) \in
\mathcal{I}_m\}: J \in \mathbb{N}, \: \mathcal{I}_m\in
\mathcal{P}_J(\mathcal{I})\},\end{eqnarray*}
where for any $J \in \mathbb{N},$ $\mathcal{P}_J(\mathcal{I})$ is the
set of all subsets $\mathcal{I}_m$ of $\mathcal{I}$ that can be
written
\begin{align*}
\mathcal{I}_m=& \left\{(j,k):\quad 0 \leq j < J, 0\leq k<2^j \right\}\\
&\quad\bigcup \cup_{j\geq J}
\left\{ (j,k):\quad k \in A_{j}, |A_{j}|=\lfloor2^{J}(j-J+1)^{-\theta}\rfloor
\right\}
\end{align*}
with $\lfloor x \rfloor := \max\{ n \in \mathbb{N}:\ n \leq x\}$.

\noindent As remarked in \cite{mas}, for any $J \in \mathbb{N}$ and
any $\mathcal{I}_m \in \mathcal{P}_J(\mathcal{I})$,
the dimension $D_{m}$ of the corresponding model $m$ depends only on $J$ and is  such that
\[2^J \leq D_{m} \leq 2^J \left(1+\sum_{n\geq 1}n^{-\theta}
\right).\]
We denote by $D_J$ this common dimension.
Note that the model collection $\Mstratcol$ does not vary with
$n$. Using Theorem~\ref{main2} with $V_n=\L_2$, we
have the following proposition. 
\begin{Prop}\label{massarttheo}
Let $\alpha_0<r$ be fixed, let $0<\alpha\leq \alpha_0$ and let $\hat{s}^{(h)}_{\hat{m}}$ be the model selection estimator associated with the model collection $\Mstratcol$. Then, under Assumptions \eqref{hyplambdalim}, \eqref{hyplambda}, \eqref{hyplambda0} and \eqref{eq:Kraftmainbis}:
\[
MS\left(\hat{s}^{(h)}_{\hat{m}},\rho_{\alpha}\right):=:\mathcal{
  A}_{_{\Mstratcol}}^{\al}, 
\]
with
\begin{align*}
\mathcal{
A}_{_{\Mstratcol}}^{\al} &=
\left\{s=\alpha_{00} \phi_{00} +
\sum_{j\geq
  0}\sum_{k=0}^{2^j-1}\beta_{jk}\psi_{jk}\in\L_2:
\vphantom{\sup_{J \geq 0}\left\{2^{2J\alpha}\sum_{j\geq J} \sum_{k\geq \lfloor2^{J}(j-J+1)^{-\theta}\rfloor} |\beta_{j}|_{(k)}^2\right\}<\infty}
\right.\\
&\quad\quad\quad\quad \left. 
\sup_{J \geq 0}\: 2^{2J\alpha}\sum_{j\geq J} \sum_{k\geq
 \lfloor2^{J}(j-J+1)^{-\theta}\rfloor}
 |\beta_{j}|_{(k)}^2<\infty
\right\},
\end{align*}
where $(|\beta_{j}|_{(k)})_k$ is the reordered sequence of coefficients $(\beta_{jk})_k$: \[|\beta_{j}|_{(1)}\geq|\beta_{j}|_{(2)}\cdots|\beta_{j}|_{(k)}\geq\cdots\geq|\beta_{j}|_{(2^j)}.\]
\end{Prop}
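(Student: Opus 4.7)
My plan is to obtain Proposition~\ref{massarttheo} as a direct specialization of Theorem~\ref{main2}, followed by an explicit identification of the approximation set $\mathcal{A}^{\al}_{\Mstratcol}$ in terms of the reordered wavelet coefficients. The first step is to fit Massart's collection into the framework of Section~\ref{basis}. Because the collection $\Mstratcol$ does not vary with $n$, I take $\mathcal{J}_n = \mathcal{I}$ for every $n$, so that $V_n = \L_2([0,1])$, $\Mcol'_n = \Mstratcol$, $\Mcol_n = \Mstratcol$ and $\Mcols = \Mstratcol$. The embedding condition~(\ref{hyplambdaembed}) is then trivial, and the remaining hypotheses~(\ref{hyplambdalim}), (\ref{hyplambda}), (\ref{hyplambda0}) and~(\ref{eq:Kraftmainbis}) are exactly those of the proposition. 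Moreover, since $P_{V_n}s = s$ for every $s\in\L_2$, the linear space $\mathcal{L}^\al_V$ equals $\L_2$, so Theorem~\ref{main2} gives $MS(\hat s^{(h)}_{\hat m},\rho_\al) :=: \mathcal{A}^{\al}_{\Mstratcol}$.

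It then remains to express $\mathcal{A}^{\al}_{\Mstratcol}$ in the form announced. For a fixed $J\in\Ne$ and any $m\in\Mstratcol$ with index set $\mathcal{I}_m\in \mathcal{P}_J(\mathcal{I})$, the model has prescribed dimension $D_J$ satisfying $2^J\leq D_J\leq 2^J(1+\sum_{n\geq 1}n^{-\theta})$, so picking $m$ of dimension at most $M$ amounts to choosing the largest $J$ with $D_J\leq M$, which makes $2^J$ and $M$ comparable up to a multiplicative constant. For that $J$ the best $\L_2$-approximation $s_m$ is obtained coefficientwise: one keeps every coefficient at levels $j<J$, and at each level $j\geq J$ one retains the set $A_j$ of indices corresponding to the $\lfloor 2^J(j-J+1)^{-\theta}\rfloor$ largest $|\beta_{jk}|$. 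The corresponding squared bias is exactly
\[
\|s-s_m\|^2 = \sum_{j\geq J}\sum_{k\geq \lfloor 2^J(j-J+1)^{-\theta}\rfloor} |\beta_j|_{(k)}^2.
\]
Combining this with $M\asymp 2^J$, the condition $\sup_M M^{2\al}\inf_{\{m\in\Mstratcol:\,D_m\leq M\}}\|s-s_m\|^2 <\infty$ is equivalent to the condition $\sup_{J\geq 0} 2^{2J\al}\sum_{j\geq J}\sum_{k\geq \lfloor 2^J(j-J+1)^{-\theta}\rfloor}|\beta_j|_{(k)}^2 <\infty$ appearing in the statement.

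The step I expect to require most care is the coefficientwise identification of the best approximation, more specifically the control near the floor functions and near the level $j=J$: one must check that allowing $J'\neq J$ cannot produce a strictly better approximation at dimension $\leq D_J$, and that the arithmetic $2^J\asymp D_J$ transfers cleanly between the approximation index $M$ and the parameter $J$, so that the suprema over $M\in\Ne^\star$ and over $J\geq 0$ indeed define the same ball structure up to constants. Once this equivalence is established at the level of balls, the conclusion $MS(\hat s^{(h)}_{\hat m},\rho_\al)\mathbin{:=:}\mathcal{A}^{\al}_{\Mstratcol}$ is obtained, as required.
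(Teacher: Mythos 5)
Your proposal is correct and follows essentially the same route as the paper: the authors obtain Proposition~\ref{massarttheo} precisely by applying Theorem~\ref{main2} with $\mathcal{J}_n=\mathcal{I}$ (hence $V_n=\L_2$, $\mathcal{L}^\al_V=\L_2$ and $\Mcols=\Mstratcol$), and the displayed characterization of $\mathcal{A}^{\al}_{\Mstratcol}$ rests on exactly the observations you make — that all models built from $\mathcal{P}_J(\mathcal{I})$ share the dimension $D_J\asymp 2^J$, and that the best such model retains the largest coefficients levelwise. Your added care about monotonicity in $J$ and the $M\asymp 2^J$ bookkeeping is a sound filling-in of details the paper leaves implicit.
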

\noindent Remark that, as in Section \ref{nested}, as soon as   $\lambda_n\geq \lambda_0$ with
$\lambda_0$ large enough, Condition~\eqref{eq:Kraftmainbis} holds.

\noindent This large set cannot be characterized in terms of classical
spaces. Nevertheless it is undoubtedly a large functional space, since
as proved in Section~\ref{sec:space-embeddings},  for every
$\alpha>0$ and every $p\geq
1$ satisfying $p>2/(2\alpha+1)$ we get
 \begin{eqnarray}
\mathcal{B}^\alpha_{p,\infty} &\subsetneq& \mathcal{A}^\alpha_{{\Mstratcol}}. \label{nestesdbattue}
\end{eqnarray}

\noindent This new procedure is not computable since one needs an
infinite number of wavelet coefficients to perform it. The problem of
calculability can be solved by introducing, as previously, a
maximum scale $j_0(n)$ as defined in \eqref{j0trunc}. 
We consider the class of collection models  $(\Mstratcol_n)_n$ defined as follows:
\begin{align*}
\Mstratcol_n &= \{ m=\mbox{span}\{\phi_{00}, \psi_{jk}: \: \: (j,k) \in
\mathcal{I}_m, j<j_0(n) \}:\\
&\quad\quad\quad\quad\quad\quad\quad\quad\quad\quad\quad\quad\quad\quad\quad\quad
 J \in \mathbb{N}, \: \mathcal{I}_m\in
\mathcal{P}_J(\mathcal{I})\}.
\end{align*}
This model collection does not satisfy the embedding condition
$\Mstratcol_n\subset\Mstratcol_{n+1}$. Nevertheless, 
we can use Proposition~\ref{prop:approxmod} with 
\[
\widetilde{\pen}_n(m)=\frac{\lambda_n}{n}D_J
\] if $m$
is obtained from an index subset ${\mathcal I}_{m}$ in $\mathcal{P}_J(\mathcal{I})$.
This slight over-penalization leads to the following result.
\begin{Prop}\label{massartcutheo}
Let $\alpha_0<r$ be fixed, let $0<\alpha\leq \alpha_0$ and let $\hat{s}^{(ht)}_{\tilde{m}}$ be the model selection estimator associated with the model collection $\Mstratcol_n$. Then, under Assumptions \eqref{hyplambdalim}, \eqref{hyplambda}, \eqref{hyplambda0} and \eqref{eq:Kraftmainbis}:
\[
MS\left(\hat{s}^{(ht)}_{\tilde{m}},\rho_{\alpha}\right):=:\mathcal{B}^{\frac{\al}{1+2\al}}_{2,\infty} \cap \mathcal{
  A}_{_{\Mstratcol}}^{\al}. 
\]
\end{Prop}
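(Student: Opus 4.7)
The strategy is to recast this estimator as a direct instance of the setup of Proposition~\ref{prop:approxmod}, and then identify the two ingredients $\mathcal{A}^\alpha_{\Mstratcol}$ and $\mathcal{L}^\alpha_V$ that appear in its conclusion.

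First, I would specialize the framework of Section~\ref{basis} by taking $\Phi=\Psi$ (the periodic wavelet basis), $\Mcol=\Mstratcol$, and the index restriction
\[
\mathcal{J}_n = \{(j,k):\ 0\leq j<j_0(n),\ 0\leq k<2^j\},
\]
with $j_0(n)$ given by (\ref{j0trunc}). With this choice $V_n=\mbox{span}\{\phi_{00},\psi_{jk}:(j,k)\in\mathcal{J}_n\}$, and the auxiliary collection $\Msievecolprime_n$ defined by (\ref{eq:structureaux}) coincides exactly with $\Mstratcol_n$: every $m\in\Mstratcol_n$ is obtained from some $\tilde m\in\Mstratcol$ by intersecting its index set with $\mathcal{J}_n$. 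Moreover, if the index set of $\tilde m$ lies in $\mathcal{P}_J(\mathcal{I})$ then $D_{\tilde m}=D_J$, so the penalty $\widetilde{\pen}_n(m)=\lambda_n D_J/n$ used to define $\hat s^{(ht)}_{\tilde m}$ is exactly $\lambda_n D_{\tilde m}/n$, matching the definition required by Proposition~\ref{prop:approxmod}. Since assumptions (\ref{hyplambdalim}), (\ref{hyplambda}), (\ref{hyplambda0}) and (\ref{eq:Kraftmainbis}) are supposed to hold, a direct application of Proposition~\ref{prop:approxmod} then yields
\[
MS(\hat s^{(ht)}_{\tilde m},\rho_\alpha) :=: \mathcal{A}^\alpha_{\Mstratcol}\cap\mathcal{L}^\alpha_V.
\]

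It remains to identify $\mathcal{L}^\alpha_V$. This computation has already been carried out in Section~\ref{nested}, for the same sequence $V_n$ and the same truncation level $j_0(n)$ prescribed by (\ref{j0trunc}): using $\|s-P_{V_n}s\|^2=\sum_{j\geq j_0(n)}\sum_k\beta_{jk}^2$ together with the rate equivalence $\rho_{n,\alpha}^2\asymp 2^{-2j_0(n)\alpha/(1+2\alpha)}$, the wavelet characterization of Besov spaces gives
\[
\mathcal{L}^\alpha_V \;=\; \mathcal{B}^{\frac{\alpha}{1+2\alpha}}_{2,\infty},
\]
and combining this with the previous display produces the announced identity.

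I do not anticipate any serious obstacle here, as the bulk of the analytical work has been absorbed into Proposition~\ref{prop:approxmod}. The only point requiring care is to notice that although $\Mstratcol_n$ fails the embedding property (\ref{hyplambdaembed}), it is precisely of the form $\Msievecolprime_n$ with the overpenalization $\widetilde{\pen}_n$ that replaces $D_m$ by the dimension $D_{\tilde m}$ of the untruncated ancestor $\tilde m\in\Mstratcol$; this is exactly the situation for which Proposition~\ref{prop:approxmod} was designed, so no additional probabilistic argument is needed beyond checking this bookkeeping.
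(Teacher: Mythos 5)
Your proposal is correct and follows essentially the same route as the paper: the authors likewise obtain Proposition~\ref{massartcutheo} by invoking Proposition~\ref{prop:approxmod} with $\Mcol=\Mstratcol$, the truncation $\mathcal{J}_n$ at level $j_0(n)$, and the over-penalty $\widetilde{\pen}_n(m)=\lambda_n D_J/n$, then identifying $\mathcal{L}^\alpha_V=\mathcal{B}^{\frac{\alpha}{1+2\alpha}}_{2,\infty}$ exactly as computed in Section~\ref{nested}. The bookkeeping you single out (that $\Mstratcol_n$ is of the form $\Msievecolprime_n$ and that $D_{\tilde m}=D_J$) is precisely the content of the paper's argument.
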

\noindent Modifying Massart's strategy in order to obtain a practical estimator
changes the maxiset performance.
The previous set $\mathcal{
  A}_{_{\Mstratcol}}^{\al}$ is intersected with the strong Besov space
  $\mathcal{B}^{\al/(1+2\al)}_{2,\infty}$. Nevertheless, as it  will be proved in Section \ref{sec:space-embeddings}, the maxiset
  $MS\left(\hat{s}^{(ht)}_{\tilde{m}},\rho_{\alpha}\right)$ is
  still a large functional space. Indeed, for every
$\alpha>0$ and every $p$ satisfying $p\geq \max(1,2\left(\frac{1}{1+2\alpha}+2\alpha \right)^{-1})$
 \begin{eqnarray}
\mathcal{B}^\alpha_{p,\infty} &\subseteq&
\mathcal{B}^{\frac{\al}{1+2\al}}_{2,\infty} \cap  \mathcal{A}^\alpha_{{\Mstratcol}}. \label{nestesdbattuecut}
\end{eqnarray}
 \subsection{Comparisons of model selection estimators}
In this paragraph, we compare the maxiset performances of the different model selection procedures described previously. For a chosen rate of convergence let us recall that the larger the maxiset, the better the estimator. To begin, we propose to focus  on the model selection estimators which are tractable from the computational point of view. Gathering Propositions \ref{larms0}, \ref{larms1} and \ref{massartcutheo} we obtain the following comparison.
\begin{Prop}\label{vs1}
Let $0<\alpha<r.$ 
\begin{itemize}
\item[-] If for every $n,$ $\lambda_n = \lambda_0 \log(n)$ with $\lambda_0$ large enough, then
\begin{eqnarray}\label{incmax}
MS(\hat{s}^{(st)}_{\hat{m}},\rho_\alpha)\subsetneq MS(\hat{s}^{(ht)}_{\tilde{m}},\rho_\alpha)\subsetneq MS(\hat{s}^{(l)}_{\hat{m}},\rho_\alpha).\end{eqnarray}
\item[-]  If for every $n,$ $\lambda_n=\lambda_0$ with $\lambda_0$ large enough, then
\begin{eqnarray}\label{incmax1}
MS(\hat{s}^{(st)}_{\hat{m}},\rho_\alpha) \subsetneq
MS(\hat{s}^{(ht)}_{\tilde{m}},\rho_\alpha).\end{eqnarray}
\end{itemize}
\end{Prop}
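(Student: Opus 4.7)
\medskip
\noindent\textbf{Proof plan.} The plan is to plug in the three maxiset identifications already established in Propositions~\ref{larms0},~\ref{larms1} and~\ref{massartcutheo}, namely
\begin{align*}
MS(\hat{s}^{(st)}_{\hat{m}},\rho_\al)&:=:\mathcal{B}^{\alpha}_{2,\infty},\\
MS(\hat{s}^{(ht)}_{\tilde{m}},\rho_\al)&:=:\mathcal{B}^{\alpha/(1+2\alpha)}_{2,\infty}\cap\mathcal{A}^\alpha_{\Mstratcol},\\
MS(\hat{s}^{(l)}_{\hat{m}},\rho_\al)&:=:\mathcal{B}^{\alpha/(1+2\alpha)}_{2,\infty}\cap\mathcal{W}_{2/(1+2\al)},
\end{align*}
so that, once the admissible regime for $(\lambda_n)_n$ is fixed, the comparison reduces to purely deterministic inclusions between function spaces.

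For the non-strict version of the chain~\eqref{incmax}, two steps suffice. First, $\mathcal{B}^\alpha_{2,\infty}\subseteq\mathcal{B}^{\alpha/(1+2\alpha)}_{2,\infty}$ is the trivial embedding in smoothness on a bounded domain, while $\mathcal{B}^\alpha_{2,\infty}\subseteq\mathcal{A}^\alpha_{\Mstratcol}$ is inclusion~\eqref{nestesdbattue} applied with $p=2$, which is admissible since $2>2/(1+2\alpha)$ whenever $\alpha>0$; intersecting yields $MS(\hat{s}^{(st)}_{\hat{m}},\rho_\al)\subseteq MS(\hat{s}^{(ht)}_{\tilde{m}},\rho_\al)$. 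Second, $\Mstratcol\subseteq\Mwealthcol$ makes the best-$n$-term error infimum smaller over $\Mwealthcol$, so $\mathcal{A}^\alpha_{\Mstratcol}\subseteq\mathcal{A}^\alpha_{\Mwealthcol}=\mathcal{W}_{2/(1+2\al)}$, where the last identity is the Kerkyacharian--Picard characterization recalled in Section~\ref{strut}; intersecting with the Besov piece gives $MS(\hat{s}^{(ht)}_{\tilde{m}},\rho_\al)\subseteq MS(\hat{s}^{(l)}_{\hat{m}},\rho_\al)$.

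To upgrade these inclusions to strict ones, I would exhibit separating functions. For $MS(\hat{s}^{(st)}_{\hat{m}},\rho_\al)\subsetneq MS(\hat{s}^{(ht)}_{\tilde{m}},\rho_\al)$, pick any $p\in(\max(1,2/(1+2\alpha)),2)$, so that~\eqref{nestesdbattue} gives $\mathcal{B}^\alpha_{p,\infty}\subseteq\mathcal{A}^\alpha_{\Mstratcol}$ and the classical Besov embedding gives $\mathcal{B}^\alpha_{p,\infty}\subseteq\mathcal{B}^{\alpha/(1+2\alpha)}_{2,\infty}$; any element of $\mathcal{B}^\alpha_{p,\infty}\setminus\mathcal{B}^\alpha_{2,\infty}$ (which exists because $\mathcal{B}^\alpha_{2,\infty}\subsetneq\mathcal{B}^\alpha_{p,\infty}$ is strict for $p<2$ on a bounded domain) then separates the two maxisets. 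The main obstacle is the second strict inclusion $MS(\hat{s}^{(ht)}_{\tilde{m}},\rho_\al)\subsetneq MS(\hat{s}^{(l)}_{\hat{m}},\rho_\al)$: I would construct a wavelet sequence $(\beta_{jk})$ whose large entries are distributed across scales in a way that, for infinitely many $J$, exceeds the rigid per-level budget $\lfloor 2^{J}(j-J+1)^{-\theta}\rfloor$ imposed by $\Mstratcol$, while remaining sparse enough to lie in $\mathcal{W}_{2/(1+2\al)}$ and smooth enough to lie in $\mathcal{B}^{\alpha/(1+2\alpha)}_{2,\infty}$; this forces the supremum defining $\mathcal{A}^\alpha_{\Mstratcol}$ to diverge, thereby separating $MS(\hat{s}^{(l)}_{\hat{m}},\rho_\al)$ from $MS(\hat{s}^{(ht)}_{\tilde{m}},\rho_\al)$. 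Finally,~\eqref{incmax1} is immediate from the first strict inclusion above, since the maxiset characterizations of $\hat{s}^{(st)}_{\hat{m}}$ and $\hat{s}^{(ht)}_{\tilde{m}}$ are unchanged when $\lambda_n\equiv\lambda_0$, whereas $\hat{s}^{(l)}_{\hat{m}}$ drops out of the comparison because Condition~\eqref{eq:Kraftmainbis} fails in that regime.
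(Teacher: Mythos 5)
Your overall strategy is exactly the paper's: substitute the maxiset identifications of Propositions~\ref{larms0}, \ref{larms1} and~\ref{massartcutheo} and reduce the statement to deterministic embeddings between $\mathcal{B}^{\alpha}_{2,\infty}$, $\mathcal{B}^{\alpha/(1+2\alpha)}_{2,\infty}$, $\mathcal{A}^{\alpha}_{\Mstratcol}$ and $\mathcal{W}_{2/(1+2\alpha)}$, which is what Section~\ref{sec:space-embeddings} carries out. The non-strict inclusions are fine ($\mathcal{B}^{\alpha}_{2,\infty}\subseteq\mathcal{A}^{\alpha}_{\Mstratcol}$ via \eqref{nestesdbattue} with $p=2$, and $\mathcal{A}^{\alpha}_{\Mstratcol}\subseteq\mathcal{A}^{\alpha}_{\Mwealthcol}=\mathcal{W}_{2/(1+2\alpha)}$ by enlarging the model collection), and so is the first strictness, modulo one slip: the embedding $\mathcal{B}^{\alpha}_{p,\infty}\subseteq\mathcal{B}^{\alpha/(1+2\alpha)}_{2,\infty}$ for $p<2$ requires $\alpha-1/p\geq \alpha/(1+2\alpha)-1/2$, i.e. $p\geq 2\left((1+2\alpha)^{-1}+2\alpha\right)^{-1}$, a threshold strictly larger than your $2/(1+2\alpha)$. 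So \emph{not} every $p$ in your interval works, but some $p<2$ close enough to $2$ does, and an element of $\mathcal{B}^{\alpha}_{p,\infty}\setminus\mathcal{B}^{\alpha}_{2,\infty}$ then separates the first two maxisets; this is the paper's embedding $(iii)$. Your treatment of \eqref{incmax1} (only the first strict inclusion is needed; $\hat{s}^{(l)}_{\hat{m}}$ drops out because \eqref{eq:Kraftmainbis} fails for constant $\lambda_n$) also matches the paper.

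The genuine gap is the second strict inclusion $MS(\hat{s}^{(ht)}_{\tilde{m}},\rho_\alpha)\subsetneq MS(\hat{s}^{(l)}_{\hat{m}},\rho_\alpha)$, which you correctly flag as the main obstacle but leave as an unexecuted sketch. One must actually exhibit $s_1\in\mathcal{B}^{\alpha/(1+2\alpha)}_{2,\infty}\cap\mathcal{W}_{2/(1+2\alpha)}$ with $s_1\notin\mathcal{A}^{\alpha}_{\Mstratcol}$, and this is the real mathematical content of the proposition. The paper's proof of its embedding $(\nu i)$ takes $\beta_{jk}=2^{-j/2}$ for $k<2^{j/(1+2\alpha)}$ and $\beta_{jk}=0$ otherwise: equal-sized coefficients at each level make membership in $\mathcal{B}^{\alpha/(1+2\alpha)}_{2,\infty}$ and $\mathcal{W}_{2/(1+2\alpha)}$ a short computation, while the count $2^{j/(1+2\alpha)}$ of nonzero coefficients at level $j$ eventually exceeds the budget $\lfloor 2^{J}(j-J+1)^{-\theta}\rfloor$. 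Even then, showing that the supremum defining $\mathcal{A}^{\alpha}_{\Mstratcol}$ diverges is not immediate: it requires locating the crossover level $J^\star$ solving $2^{J^\star/(1+2\alpha)}=2^{J}(J^\star-J+1)^{-\theta}$ and bounding the residual sum from below by $C\,J^{2\alpha\theta}\,2^{-2J\alpha}$, i.e. beating $2^{-2J\alpha}$ by a logarithmic factor. Without this construction and estimate, the rightmost strict inclusion in \eqref{incmax} is not established.
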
  
\noindent It means the followings.
\begin{itemize}
\item[-]  If for every $n,$ $\lambda_n = \lambda_0 \log(n)$ with
  $\lambda_0$ large enough, then, according to the maxiset point of
  view, the estimator
  $\hat{s}^{(l)}_{\hat{m}}$ strictly outperforms the estimator
  $\hat{s}^{(ht)}_{\tilde{m}}$ which strictly outperforms the estimator
  $\hat{s}^{(st)}_{\hat{m}}$.
\item[-]
  If for every $n,$ $\lambda_n = \lambda_0$ or $\lambda_n = \lambda_0 \log(n)$  with
  $\lambda_0$ large enough, then, according to the maxiset point of
  view, the estimator
  $\hat{s}^{(ht)}_{\tilde{m}}$ strictly outperforms the estimator
  $\hat{s}^{(st)}_{\hat{m}}$.
\end{itemize}
The corresponding embeddings of functional spaces are proved in Section \ref{sec:space-embeddings}.
The hard thresholding
estimator $\hat{s}^{(l)}_{\hat{m}}$ appears as the best estimator when $\lambda_n$ grows
logarithmically while estimator $\hat{s}^{(ht)}_{\tilde{m}}$ is the
best estimator when $\lambda_n$ is constant. In both cases, those
estimators perform very well since their maxiset contains all
the Besov spaces $\mathcal{B}^{\frac{\alpha}{1+2\alpha}}_{p,\infty}$
with $p\geq \max\left(1,\left(\frac{1}{1+2\alpha}+2\alpha \right)^{-1}\right)$.

\noindent We forget now the calculability issues and consider 
the maxiset of the original procedure proposed by
Massart. Propositions~\ref{larms1}, \ref{massarttheo}
and~\ref{massartcutheo} lead then to the following result.
\begin{Prop}\label{vs2}
Let $0<\alpha<r$. 
\begin{itemize}
\item[-] If  for any $n,$ $\lambda_n = \lambda_0 \log(n)$ with $\lambda_0$ large enough
then
\begin{align}
\label{incmax2}
MS(\hat{s}^{(h)}_{\hat{m}},\rho_\alpha) \not\subset
MS(\hat{s}^{(l)}_{\hat{m}},\rho_\alpha)
\quad\text{and}\quad
MS(\hat{s}^{(l)}_{\hat{m}},\rho_\alpha) \not\subset
MS(\hat{s}^{(h)}_{\hat{m}},\rho_\alpha).
\end{align}
\item[-] If  for any $n,$ $\lambda_n = \lambda_0$ or $\lambda_n =
  \lambda_0 \log(n)$ with $\lambda_0$ large enough
then
\begin{eqnarray}\label{incmax3}
MS(\hat{s}^{(ht)}_{\tilde{m}},\rho_\alpha) \subsetneq MS(\hat{s}^{(h)}_{\hat{m}},\rho_\alpha).\end{eqnarray}
\end{itemize}
\end{Prop}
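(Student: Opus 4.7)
My proof proposal would rely entirely on the three maxiset identifications of Propositions~\ref{larms1}, \ref{massarttheo}, and~\ref{massartcutheo}, namely $MS(\hat{s}^{(l)}_{\hat{m}},\rho_\alpha) \,{:=:}\, \mathcal{B}^{\frac{\alpha}{1+2\alpha}}_{2,\infty} \cap \mathcal{W}_{\frac{2}{1+2\alpha}}$, $MS(\hat{s}^{(h)}_{\hat{m}},\rho_\alpha) \,{:=:}\, \mathcal{A}^\alpha_{\Mstratcol}$, and $MS(\hat{s}^{(ht)}_{\tilde{m}},\rho_\alpha) \,{:=:}\, \mathcal{B}^{\frac{\alpha}{1+2\alpha}}_{2,\infty} \cap \mathcal{A}^\alpha_{\Mstratcol}$. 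Each claim of the proposition then becomes a set-theoretic comparison between these approximation spaces. The inclusion $\mathcal{B}^{\frac{\alpha}{1+2\alpha}}_{2,\infty} \cap \mathcal{A}^\alpha_{\Mstratcol} \subseteq \mathcal{A}^\alpha_{\Mstratcol}$ in~(\ref{incmax3}) is trivial, so the task reduces to strictness, i.e., exhibiting $s \in \mathcal{A}^\alpha_{\Mstratcol} \setminus \mathcal{B}^{\frac{\alpha}{1+2\alpha}}_{2,\infty}$. The same function also settles the first non-inclusion in~(\ref{incmax2}), since $MS(\hat{s}^{(l)}_{\hat{m}},\rho_\alpha) \subset \mathcal{B}^{\frac{\alpha}{1+2\alpha}}_{2,\infty}$.

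For that first counter-example I would take the sparse ``needle'' signal whose only nonzero wavelet coefficients are $\beta_{j,0} = (j+1)^{-\gamma}$ for $\gamma > 1/2 + \alpha\theta$. At each level $J$, the model $m \in \Mstratcol$ accommodates the single coefficient at every scale $j \leq J + 2^{J/\theta}$, so the uncovered tail collapses to $\sum_{j > J + 2^{J/\theta}}(j+1)^{-2\gamma} \lesssim 2^{J(1-2\gamma)/\theta}$, which is $O(2^{-2J\alpha})$ thanks to the choice of $\gamma$; hence $s \in \mathcal{A}^\alpha_{\Mstratcol}$. On the other hand, the polynomial tail $\sum_{j \geq j_0}(j+1)^{-2\gamma} \sim j_0^{1-2\gamma}$ cannot match the exponential rate $2^{-2j_0\alpha/(1+2\alpha)}$ required by $\mathcal{B}^{\frac{\alpha}{1+2\alpha}}_{2,\infty}$, and therefore $s \notin \mathcal{B}^{\frac{\alpha}{1+2\alpha}}_{2,\infty}$, which finishes~(\ref{incmax3}) and the first half of~(\ref{incmax2}).

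The reverse non-inclusion in~(\ref{incmax2}), $MS(\hat{s}^{(l)}_{\hat{m}},\rho_\alpha) \not\subseteq MS(\hat{s}^{(h)}_{\hat{m}},\rho_\alpha)$, is the main obstacle. It must exploit the rigidity built into $\Mstratcol$: at level $J$, only $\lfloor 2^J(j-J+1)^{-\theta}\rfloor$ coefficients may be retained at scale $j \geq J$, a constraint to which $\mathcal{W}_{\frac{2}{1+2\alpha}}$ is entirely blind. I would build ``block'' atoms $s_\ell$ consisting of $N_\ell = 2^{J_\ell}$ equal coefficients of magnitude $a_\ell = 2^{-J_\ell(1+2\alpha)/2}$ all placed at a single scale $j_\ell = \lceil J_\ell(1+2\alpha)\rceil$. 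The choice of $j_\ell$ saturates the Besov tail condition and the choice of $a_\ell$ saturates the weak-Besov condition, so the $\mathcal{B}^{\frac{\alpha}{1+2\alpha}}_{2,\infty}$ and $\mathcal{W}_{\frac{2}{1+2\alpha}}$ norms of $s_\ell$ stay bounded uniformly in $\ell$. Evaluating the sup defining $\mathcal{A}^\alpha_{\Mstratcol}$ at the critical index $J = J_\ell + k$ with $k \simeq \theta\log_2(j_\ell-J_\ell)$ reduces to a quantity of order $J_\ell^{2\theta\alpha}$, which diverges as $\ell \to \infty$. Taking $s = \sum_{\ell \geq 1} s_\ell$ with $J_\ell = \ell$ and pairwise disjoint scales $j_\ell$, the disjointness of the supports ensures that the two controlled norms of $s$ are bounded by the sup over $\ell$ of the corresponding norms of $s_\ell$, while the per-block blowup of the $\mathcal{A}^\alpha_{\Mstratcol}$ quasinorm survives the superposition, producing the required function.

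The main technical difficulty lies exactly in that last step: verifying that the critical-index computation $k \simeq \theta\log_2(j_\ell-J_\ell)$ is robust to the logarithmic corrections and floor functions, and checking that summing block atoms at pairwise distinct scales really does preserve the divergence of the $\mathcal{A}^\alpha_{\Mstratcol}$ quasinorm while keeping the Besov and weak-Besov norms finite. The remaining parts of Proposition~\ref{vs2} are essentially bookkeeping: translating each maxiset identification into a set inclusion (modulo the ball-radius convention encoded by the symbol $:=:$) and matching the rates $\rho_{n,\alpha}$ separately in the constant and logarithmic regimes of $\lambda_n$ stated in the two bullets of the proposition.
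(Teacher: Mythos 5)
Your proposal is correct and follows essentially the same route as the paper: the proposition is reduced, via the maxiset identifications of Propositions~\ref{larms1}, \ref{massarttheo} and~\ref{massartcutheo}, to the two non-embeddings $\mathcal{A}^{\alpha}_{_{\Mstratcol}}\not\subset\mathcal{B}^{\frac{\alpha}{1+2\alpha}}_{2,\infty}$ and $\mathcal{B}^{\frac{\alpha}{1+2\alpha}}_{2,\infty}\cap\mathcal{W}_{\frac{2}{1+2\alpha}}\not\subset\mathcal{A}^{\alpha}_{_{\Mstratcol}}$, each settled by an explicit counterexample. Your needle signal $(j+1)^{-\gamma}\psi_{j,0}$ and your equal-coefficient blocks at scales $j_\ell=\lceil J_\ell(1+2\alpha)\rceil$ are minor variants of the paper's $s_0=\sum_j 2^{-\sqrt j}\psi_{j,0}$ and $s_1$ (which places $2^{j/(1+2\alpha)}$ coefficients of size $2^{-j/2}$ at \emph{every} scale), and your critical-index computation $k\simeq\theta\log_2(j_\ell-J_\ell)$ yielding divergence of order $J_\ell^{2\alpha\theta}$ matches the paper's estimate $2^{2J\alpha}E_J\gtrsim(\log)^{2\alpha\theta}$.
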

\noindent Hence, within the maxiset framework,  the estimator $\hat{s}^{(h)}_{\hat{m}}$ strictly outperforms
the estimator $\hat{s}^{(ht)}_{\tilde{m}}$
while
the estimators $\hat{s}^{(h)}_{\hat{m}}$ and $\hat{s}^{(l)}_{\hat{m}}$
are not comparable. Note that we did not consider the maxisets of
the estimator $\hat{s}^{(s)}_{\hat{m}}$ in this section as they are identical
to the ones of the tractable estimator $\hat{s}^{(st)}_{\hat{m}}$.
\begin{figure}[ht]
  \centering
\includegraphics[width=13cm]{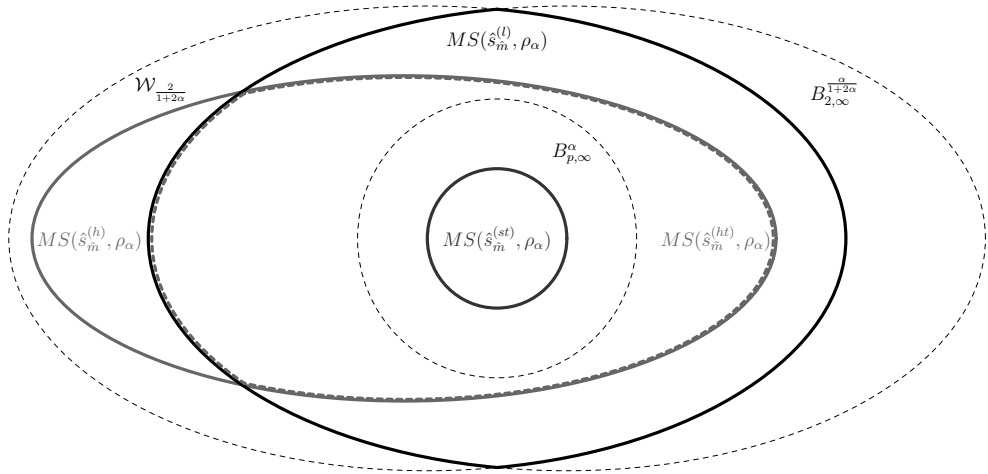}
  \caption{Maxiset embeddings when $\lambda_n = \lambda_0 \log(n)$ and $\max(1,2\left(\frac{1}{1+2\alpha}+2\alpha \right)^{-1})\leq p\leq 2$.}
\label{fig:Maxilog}
\end{figure}
\begin{figure}[ht]
  \centering
\includegraphics[width=13cm]{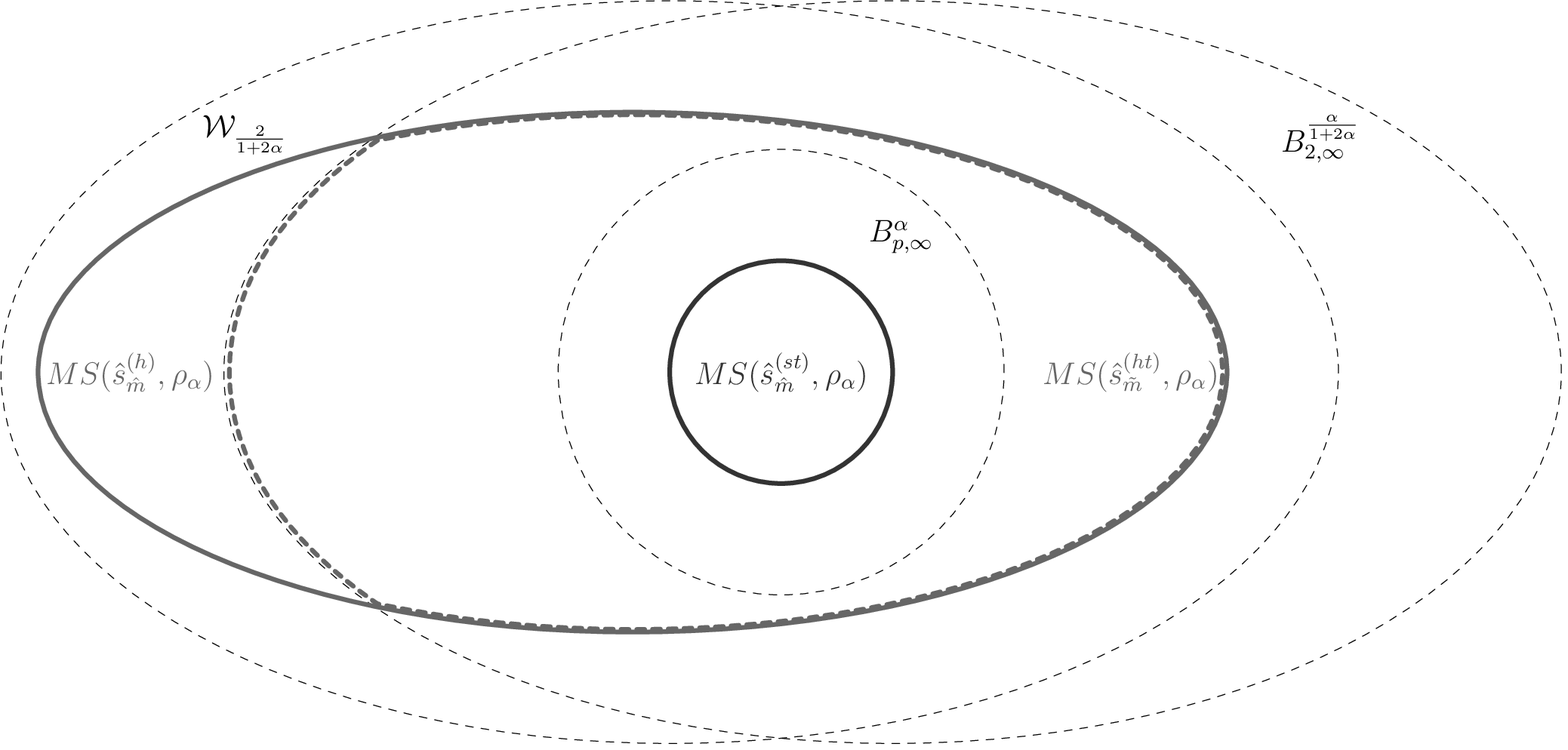}
  \caption{Maxiset embeddings when $\lambda_n = \lambda_0$ and $\max(1,2\left(\frac{1}{1+2\alpha}+2\alpha \right)^{-1})\leq p\leq 2$.}
\label{fig:Maxicst}
\end{figure}
We summarize all those embeddings in
Figure~\ref{fig:Maxilog} and Figure~\ref{fig:Maxicst}:
Figure
\ref{fig:Maxilog} represents these maxiset embeddings for the choice $\lambda_n
= \lambda_0 \log(n)$, while Figure \ref{fig:Maxicst} represents these
maxiset embeddings for the choice $\lambda_n = \lambda_0$.
\section{Proofs}\label{proofs}
For any functions $u$ and $u'$ of $\L_2(\mathcal{D})$, we denote by $\langle u,u'\rangle$ the $\L_2$-scalar product between $u$ and $u'$:
$$\langle u,u'\rangle=\int_\mathcal{D}u(t)u'(t)dt.$$
We denote by $C$ a constant whose value may change at each line.
\subsection{Proof of Theorem \ref{main}}
Without loss of generality, we assume that $n_0=1$. We start by constructing a different representation of the white noise
model. For any model $m$, we define $\Ws_m$,
the projection of the noise on $m$ by
$$
\Ws_m = \sum_{i=1}^{D_m} W_{e^m_i} e^m_i,\quad W_{e^m_i}=\int_\mathcal{D} e^m_i(t)dW_t,$$
where $\{e^m_i\}_{i=1}^{D_m}$ is any orthonormal basis of $m$. 
For any
function $s\in m$, we have :
$$
W_s = \int_\mathcal{D} s(t)dW_t= \sum_{i=1}^{D_m} \langle s, e^m_i \rangle W_{e^m_i} = \langle
\Ws_m , s \rangle.
$$
The key observation is now that with high probability,
$\|\Ws_m\|^2$ can be controlled simultaneously over all models. More
precisely, for any $m,m' \in \Mcol_n$, we define the space $m+m'$ as
the space spanned by the functions of $m$ and $m'$ and control the
norm of $\|\Ws_{m+m'}\|^2$.

\begin{Lemma}\label{cirelson}
Let $n$ be fixed and
\[
A_n = \left\{ \sup_{m \in \Mcol_n}\sup_{m' \in \Mcol_n}\left\{(D_m+D_{m'})^{-1} \|
\Ws_{m+m'}\|^2\right\} \leq  \lambda_n \right\}.
\]
Then, under Assumption (\ref{eq:Kraftmain}), we have
$\P\{A_n\}\geq p$.
\end{Lemma}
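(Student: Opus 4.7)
The plan is to recognize $\|\Ws_{m+m'}\|$ as a 1-Lipschitz functional of the underlying Gaussian measure, apply Cirelson's concentration inequality on each pair $(m,m')$, and then use Assumption (\ref{eq:Kraftmain}) to conclude via a union bound over $\Mcol_n \times \Mcol_n$.

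First I would note that, for any model $M$ (in particular $M = m+m'$), if $\{f_i\}_{i=1}^{D_M}$ is an orthonormal basis of $M$, then $\|\Ws_M\|^2 = \sum_{i=1}^{D_M} W_{f_i}^2$, where the $W_{f_i}$ are i.i.d.\ $\mathcal{N}(0,1)$. Thus $\E[\|\Ws_M\|^2] = D_M$ and by Jensen $\E[\|\Ws_M\|] \leq \sqrt{D_M}$. Moreover $\|\Ws_M\| = \sup_{s \in M,\ \|s\|\leq 1} W_s$ is 1-Lipschitz with respect to the underlying Gaussian in the sense of Cirelson--Ibragimov--Sudakov, so
\[
\P\!\left(\|\Ws_M\| \geq \sqrt{D_M} + t\right) \leq e^{-t^2/2},\qquad t\geq 0.
\]
Crucially, $\dim(m+m') \leq D_m + D_{m'}$, so specializing to $M = m+m'$ and choosing $t = (\sqrt{\lambda_n}-1)\sqrt{D_m+D_{m'}}$ (which is nonnegative since (\ref{eq:Kraftmain}) forces $\lambda_n\geq 1$), I would get
\[
\P\!\left(\|\Ws_{m+m'}\|^2 \geq \lambda_n (D_m+D_{m'})\right)
\leq \exp\!\left(-\tfrac{(\sqrt{\lambda_n}-1)^2(D_m+D_{m'})}{2}\right).
\]

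Next I would take a union bound over all pairs $(m,m')\in \Mcol_n\times\Mcol_n$ to obtain
\[
\P(A_n^c) \leq \sum_{m\in\Mcol_n}\sum_{m'\in\Mcol_n}
e^{-\frac{(\sqrt{\lambda_n}-1)^2(D_m+D_{m'})}{2}}
= \Bigl(\sum_{m\in\Mcol_n} e^{-\frac{(\sqrt{\lambda_n}-1)^2 D_m}{2}}\Bigr)^{\!2}.
\]
Applying Assumption (\ref{eq:Kraftmain}) directly bounds this by $1-p$, giving $\P(A_n)\geq p$ as claimed.

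The only delicate point is the concentration step: I must justify that $\|\Ws_{m+m'}\|$ really is 1-Lipschitz in the right variables, which follows from representing it as a supremum of linear Gaussian functionals on the unit ball of $m+m'$ and invoking the Cirelson--Ibragimov--Sudakov inequality (or equivalently, the Gaussian concentration inequality for Lipschitz functions with variance proxy $1$). The rest is the elementary observation $\dim(m+m')\leq D_m+D_{m'}$ together with a union bound whose structure exactly matches the shape of the Kraft-type hypothesis (\ref{eq:Kraftmain}).
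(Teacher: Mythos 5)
Your proposal is correct and follows essentially the same route as the paper: the Cirelson--Ibragimov--Sudakov concentration inequality applied to $\|\Ws_{m+m'}\|$ with $\E[\|\Ws_{m+m'}\|]\leq\sqrt{D_m+D_{m'}}$ and $t=(\sqrt{\lambda_n}-1)\sqrt{D_m+D_{m'}}$, followed by a union bound over pairs that factors into the square of the sum in Assumption (\ref{eq:Kraftmain}). Your explicit remarks on the Lipschitz justification and on the sign of $t$ are slightly more careful than the paper's, but the argument is the same.
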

\begin{proof}
The Cirelson-Ibragimov-Sudakov inequality (see \cite{mas}, page 10)
implies that for any $t>0$, any $m\in \Mcol_n$ and any $m'\in \Mcol_n$
\[
\P\left\{\|\mathbf{W}_{m+m'}\|\geq \E\left[\|\Ws_{m+m'}\|\right]+t\right\}\leq e^{-\frac{t^2}{2}}.
\]
Since 
\[
\E\left[\|\Ws_{m+m'}\|\right]\leq
\sqrt{\E\left[\|\Ws_{m+m'}\|^2\right]}\leq \sqrt{D_m+D_{m'}},
\] with
$t=\sqrt{\lambda_n (D_m+D_{m'})}-\sqrt{D_m+D_{m'}}$, we obtain
\[
\P\left\{\|\Ws_{m+m'}\|^2\geq  \lambda_n (D_m+D_{m'})
\right\}\leq e^{-\frac{\left(\sqrt{\lambda_n}-1\right)^2 (D_m+D_{m'})}{2}}.
\]
Assumption~\eqref{eq:Kraftmain} implies thus that
\begin{align*}
1-\P\{A_n\} &\leq \sum_{m\in\Mcol_n}  \sum_{m'\in\Mcol_n}
\P\left\{\|\Ws_{m+m'}\|^2 \geq 
  \lambda_n (D_m +D_m')\right\}\\
&
\leq 
\sum_{m\in\Mcol_n} \sum_{m'\in\Mcol_n}
e^{-\frac{\left(\sqrt{\lambda_n}-1\right)^2(D_m+D_{m'})}{2}}\\
&  \leq \left(\sum_{m\in\Mcol_n} 
e^{-\frac{\left(\sqrt{\lambda_n}-1\right)^2 D_m}{2}}
\right)^2 \leq 1-p.
\end{align*}
\end{proof}
\noindent We define $m_0(n)$ (denoted $m_0$ when there is no ambiguity), the model
that minimizes a quantity close to $Q(s,n)$:
\begin{align*}
m_0(n) = \argmin_{m\in\Mcol_n} \left\{\|s_m-s\|^2 + \frac{\lambda_n}{Kn} D_m\right\},
\end{align*}
where $K$ is an absolute constant larger than 1 specified later.
The proof of the theorem begins by a bound on
$\|s_{m_0}-s\|^2 $:
\begin{Lemma}\label{lem:lemmastart}
For any $0<\gamma<1$,
\begin{align}
\label{eq:bound2}
\|s_{m_0}-s\|^2 
&\leq \frac{\tilde K + 4\gamma^{-1}}{\tilde K \P\{A_n\}}
\E\left[\|\hat s_{\hat m}-s\|^2\right]
+
\left(
\frac{
K(2\gamma^{-1}+1)
}{\tilde K \P\{A_n\}}
+ \frac{2K\gamma\lambda_n}{\tilde K}
\right)
 \frac{D_{m_0}}{Kn}
\end{align}
if the constant  $\tilde K = K(1-\gamma)-2\gamma^{-1}-1$ satisfies $\tilde K>0$.
\end{Lemma}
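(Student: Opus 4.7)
The plan is to establish a pathwise bound of the required shape on the event $A_n$ of Lemma~\ref{cirelson}, and then convert it into an expectation bound by exploiting that $\|s_{m_0}-s\|^2$ is deterministic. Indeed, any pathwise inequality $\|s_{m_0}-s\|^2\leq Z$ valid on $A_n$ yields $\|s_{m_0}-s\|^2\,\P\{A_n\}\leq\E[Z\mathbf{1}_{A_n}]$, and dividing by $\P\{A_n\}$ introduces the $\P\{A_n\}^{-1}$ factor. However, contributions to $Z$ that are already deterministic survive with a $\P\{A_n\}$ that cancels against the division, explaining the split of the coefficient of $D_{m_0}/(Kn)$ into a $\P\{A_n\}^{-1}$-weighted piece and a $\P\{A_n\}^{-1}$-free piece.

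To build the pathwise inequality I would combine two ingredients. First, the definition of $m_0$ applied to the random $\hat m$, together with Pythagoras ($\|s_{\hat m}-s\|^2\leq\|\hat s_{\hat m}-s\|^2$), yields
\[
K\|s_{m_0}-s\|^2\leq K\|\hat s_{\hat m}-s\|^2+\frac{\lambda_n D_{\hat m}}{n}.
\]
Second, the minimality of $\hat m$ for the empirical criterion, rewritten via the identity $\gamma_n(u)+\|s\|^2=\|u-s\|^2-2n^{-1/2}W_{u-s}$, gives
\[
\frac{\lambda_n D_{\hat m}}{n}\leq \|\hat s_{m_0}-s\|^2-\|\hat s_{\hat m}-s\|^2+\frac{\lambda_n D_{m_0}}{n}+2n^{-1/2}W_{\hat s_{\hat m}-\hat s_{m_0}}.
\]
On $A_n$, since $\hat s_{\hat m}-\hat s_{m_0}\in\hat m+m_0$, Cauchy-Schwarz together with $\|\Ws_{\hat m+m_0}\|^2\leq\lambda_n(D_{\hat m}+D_{m_0})$ and Young's inequality with parameter $\gamma$ (oriented so that the coefficient of $\lambda_n D_{\hat m}/n$ on the right is $\gamma<1$) dominate the cross term by $\gamma^{-1}\|\hat s_{\hat m}-\hat s_{m_0}\|^2+\gamma\lambda_n(D_{\hat m}+D_{m_0})/n$. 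Combining this with $\|\hat s_{\hat m}-\hat s_{m_0}\|^2\leq 2\|\hat s_{\hat m}-s\|^2+2\|\hat s_{m_0}-s\|^2$ and the Pythagoras identity $\|\hat s_{m_0}-s\|^2=\|s_{m_0}-s\|^2+n^{-1}\|\Ws_{m_0}\|^2$ produces, on $A_n$, an inequality of the form
\[
\tilde K\,\|s_{m_0}-s\|^2\leq(\tilde K+4\gamma^{-1})\,\|\hat s_{\hat m}-s\|^2+(1+2\gamma^{-1})\,n^{-1}\|\Ws_{m_0}\|^2+C_\gamma\,\frac{\lambda_n D_{m_0}}{n},
\]
with $\tilde K=K(1-\gamma)-2\gamma^{-1}-1$ arising precisely as the residual coefficient of $\|s_{m_0}-s\|^2$ after absorbing the $(1+2\gamma^{-1})\|s_{m_0}-s\|^2$ coming from Pythagoras into $K(1-\gamma)\|s_{m_0}-s\|^2$.

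The crucial observation for reproducing the stated mixed form of constants is that $n^{-1}\|\Ws_{m_0}\|^2$ should \emph{not} be bounded through the event $A_n$: since $\E[n^{-1}\|\Ws_{m_0}\|^2]=D_{m_0}/n$, one has directly $\E[n^{-1}\|\Ws_{m_0}\|^2\mathbf{1}_{A_n}]\leq D_{m_0}/n$, producing a contribution proportional to $D_{m_0}/n$ (without $\lambda_n$) that inherits the $\P\{A_n\}^{-1}$ factor upon division. In contrast, the purely deterministic term $\lambda_n D_{m_0}/n$ yields a contribution proportional to $\lambda_n D_{m_0}/n$ with no $\P\{A_n\}^{-1}$, corresponding to the $2K\gamma\lambda_n/\tilde K$ piece in the lemma. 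The main obstacle is the delicate bookkeeping: one must orient Young's inequality correctly so that the coefficient of $\lambda_n D_{\hat m}/n$ remains strictly below one for $\gamma\in(0,1)$, then track the two uses of Pythagoras (for $\|\hat s_{m_0}-s\|^2$ and for $\|s_{\hat m}-s\|^2$) to recover exactly the coefficient $\tilde K$ and the precise two-part form of the coefficient of $D_{m_0}/(Kn)$.
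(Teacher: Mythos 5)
Your proposal is correct and follows essentially the same route as the paper's proof: the empirical minimality of $\hat m$ plus the deterministic minimality of $m_0$, Young's inequality on the cross term $2n^{-1/2}\langle \Ws_{\hat m+m_0},\hat s_{\hat m}-\hat s_{m_0}\rangle$ combined with the event $A_n$, the two Pythagoras identities, and then taking expectations so that the deterministic left side picks up $\P\{A_n\}$ while $\E[n^{-1}\|\Ws_{m_0}\|^2]=D_{m_0}/n$ produces the $\lambda_n$-free piece of the coefficient. Your pathwise inequality $\tilde K\|s_{m_0}-s\|^2\leq(\tilde K+4\gamma^{-1})\|\hat s_{\hat m}-s\|^2+(1+2\gamma^{-1})n^{-1}\|\Ws_{m_0}\|^2+2\gamma\lambda_n D_{m_0}/n$ on $A_n$ is exactly the paper's intermediate bound rescaled by $K(1-\gamma)$, and the final constants match.
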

\begin{proof}
By definition,
$$
  \gamma_n(\hat s_{\hat m}) + \lambda_n \frac{D_{\hat m}}{n} \leq \gamma_n(\hat s_{m_0}) + \lambda_n \frac{D_{m_0}}{n}.$$
Thus,
\begin{align*}
 \lambda_n \frac{D_{\hat m} - D_{m_0}}{n} 
 & \leq
 \gamma_n(\hat s_{m_0}) - \gamma_n(\hat s_{\hat m})\\
 & \leq -2 Y_n(\hat s_{m_0}) + \|\hat s_{m_0}\|^2
 + 2 Y_n(\hat s_{\hat m}) - \|\hat s_{\hat m}\|^2 \\
& \leq - 2 \langle \hat s_{m_0} , s \rangle + \|\hat s_{m_0}\|^2
 + 2 \langle \hat s_{\hat m} , s \rangle - \|\hat s_{\hat m}\|^2 +
 \frac{2}{\sqrt{n}} W_{\hat s_{\hat m} - \hat s_{m_0}}\\
& \leq \|\hat s_{m_0}-s\|^2 - \|\hat s_{\hat m}-s\|^2 +
\frac{2}{\sqrt{n}} W_{\hat s_{\hat m} - \hat s_{m_0}}. 
\end{align*}
Let $0<\gamma<1$. As $\hat s_{\hat m} - \hat
  s_{m_0}$ is supported by the space $\hat m + m_0$ spanned by the functions of $\hat
  m$ and $m_0$, we obtain with the previous definition
\begin{align*}
 \lambda_n \frac{D_{\hat m} - D_{m_0}}{n} 
 & 
\leq \|\hat s_{m_0}-s\|^2 - \|\hat s_{\hat m}-s\|^2 +
\frac{2}{\sqrt{n}} \langle \Ws_{\hat m + m_0} , \hat s_{\hat m} - \hat
  s_{m_0} \rangle\\
\begin{split}
& \leq \|\hat s_{m_0}-s\|^2 - \|\hat s_{\hat m}-s\|^2 +
\frac{\gamma}{n} \|\Ws_{\hat m + m_0}\|^2\\ 
&\qquad\qquad+ \frac{2}{\gamma}\left( \|\hat s_{m_0}-s\|^2 + \|\hat
s_{\hat m}-s\|^2 \right)
\end{split}\\
& \leq \left(\frac{2}{\gamma}+1\right) \|\hat s_{m_0}-s\|^2 + \left(\frac{2}{\gamma}-1\right)
\|\hat s_{\hat m}-s\|^2 + \frac{\gamma}{n} \|\Ws_{\hat m + m_0}\|^2.
\end{align*}
We multiply now by  $\mathbf{1}_{A_n}$ to obtain
\begin{align*}
\begin{split}
 \lambda_n \mathbf{1}_{A_n} \frac{D_{\hat m} - D_{m_0}}{n} 
& \leq \left(\frac{2}{\gamma}+1\right) \mathbf{1}_{A_n} \|\hat s_{m_0}-s\|^2 + \left(\frac{2}{\gamma}-1\right)
\mathbf{1}_{A_n} \|\hat s_{\hat m}-s\|^2 \\
&\qquad\qquad+ \mathbf{1}_{A_n}
\frac{\gamma}{n} \|\Ws_{\hat m + m_0}\|^2.
\end{split}
\\
\intertext{Using now the definition of $A_n$ and Lemma \ref{cirelson}, it yields}
\begin{split}
 \lambda_n \mathbf{1}_{A_n} \frac{D_{\hat m} - D_{m_0}}{n} 
& \leq \left(\frac{2}{\gamma}+1\right) \mathbf{1}_{A_n} \|\hat s_{m_0}-s\|^2 + \left(\frac{2}{\gamma}-1\right)
\mathbf{1}_{A_n} \|\hat s_{\hat m}-s\|^2 \\
&\qquad\qquad+
 \gamma \lambda_n  \mathbf{1}_{A_n} \frac{D_{\hat m} +
   D_{m_0}}{n}
\end{split}
\end{align*}
and thus
\begin{align*}
\begin{split}
\left(1-\gamma\right) \lambda_n \mathbf{1}_{A_n} \frac{D_{\hat m} - D_{m_0}}{n} 
& \leq   \left(\frac{2}{\gamma}+1\right) \mathbf{1}_{A_n} \|\hat s_{m_0}-s\|^2
\\
&\qquad+ \left(\frac{2}{\gamma}-1\right)
\mathbf{1}_{A_n} \|\hat s_{\hat m}-s\|^2 +
 2\gamma\lambda_n  \mathbf{1}_{A_n} \frac{D_{m_0}}{n}.
\end{split}
\end{align*}
One obtains
\begin{align}
\begin{split}
\label{eq:bound1}
\lambda_n \mathbf{1}_{A_n} \frac{D_{\hat m} - D_{m_0}}{n} 
& \leq   \frac{\frac{2}{\gamma}+1}{1-\gamma} \mathbf{1}_{A_n} \|\hat s_{m_0}-s\|^2 + \frac{\frac{2}{\gamma}-1}{1-\gamma}
\mathbf{1}_{A_n} \|\hat s_{\hat m}-s\|^2 \\
&\qquad\qquad+
 \frac{2\gamma}{1-\gamma} \lambda_n  \mathbf{1}_{A_n}
 \frac{D_{m_0}}{n}.
\end{split}
\end{align}
We derive now a bound on $\|s_{m_0}-s\|^2$. By definition,
  \begin{align*}
  \|s_{m_0}-s\|^2 + \lambda_n \frac{D_{m_0}}{Kn} \leq \|s_{\hat m}-s\|^2 +
  \lambda_n \frac{D_{\hat m}}{Kn}
\end{align*}
and thus
\begin{align*}
  \|s_{m_0}-s\|^2 &\leq \|s_{\hat m}-s\|^2 +
  \lambda_n \frac{D_{\hat m} - D_{m_0}}{Kn}.
\end{align*}
By multiplying by $\mathbf{1}_{A_n}$  and plugging the bound~(\ref{eq:bound1}), we have:
\begin{align*}
\mathbf{1}_{A_n}  \|s_{m_0}-s\|^2 &\leq \mathbf{1}_{A_n} \|s_{\hat m}-s\|^2 +
  \lambda_n \mathbf{1}_{A_n}\frac{D_{\hat m} - D_{m_0}}{Kn}\\
&\leq\mathbf{1}_{A_n} \|s_{\hat m}-s\|^2 +
 \frac{\frac{2}{\gamma}+1}{K(1-\gamma)}
 \mathbf{1}_{A_n} \|\hat s_{m_0}-s\|^2 \\
&\qquad+ \frac{\frac{2}{\gamma}-1}{K(1-\gamma)}
\mathbf{1}_{A_n} \|\hat s_{\hat m}-s\|^2 
+
 \frac{2\gamma}{K(1-\gamma)} \lambda_n  \mathbf{1}_{A_n} 
 \frac{D_{m_0}}{n}\\
&\leq \left( 1 + \frac{\frac{2}{\gamma}-1}{K(1-\gamma)}\right)
\mathbf{1}_{A_n} \|\hat s_{\hat m}-s\|^2\\
&\qquad+ \frac{\frac{2}{\gamma}+1}{K(1-\gamma)}
\mathbf{1}_{A_n} ( \|s_{m_0}-s\|^2 + \frac{1}{n}\|W_{m_0}\|^2)\\
&\qquad\qquad
+
 \frac{2\gamma}{K(1-\gamma)} \lambda_n  \mathbf{1}_{A_n} 
 \frac{D_{m_0}}{n}\\
& \leq \left( 1 + \frac{\frac{2}{\gamma}-1}{K(1-\gamma)}\right)
\|\hat s_{\hat m}-s\|^2 
+ \frac{\frac{2}{\gamma}+1}{K(1-\gamma)}
\mathbf{1}_{A_n} \|s_{m_0}-s\|^2 \\
&\qquad\qquad+ \frac{\frac{2}{\gamma}+1}{K(1-\gamma)}
\frac{1}{n}\|W_{m_0}\|^2 +
 \frac{2\gamma}{K(1-\gamma)} \lambda_n  \mathbf{1}_{A_n} 
 \frac{D_{m_0}}{n}
\end{align*}
and thus
\begin{align*}
\begin{split}
\left(1 -
  \frac{\frac{2}{\gamma}+1}{K(1-\gamma)}\right)\mathbf{1}_{A_n}
&\|s_{m_0}-s\|^2 \\
&\leq \left( 1 + \frac{\frac{2}{\gamma}-1}{K(1-\gamma)}\right)
\|\hat s_{\hat m}-s\|^2\\
&\qquad
+ \frac{\frac{2}{\gamma}+1}{K(1-\gamma)}
\frac{1}{n}\|W_{m_0}\|^2
+
 \frac{2\gamma}{K(1-\gamma)} \lambda_n  \mathbf{1}_{A_n} 
 \frac{D_{m_0}}{n}.
\end{split}
\end{align*}
Taking the expectation on both sides yields
\begin{align*}
\begin{split}
\left(1 -
  \frac{\frac{2}{\gamma}+1}{K(1-\gamma)}\right)&
\P\{A_n\}
\|s_{m_0}-s\|^2 \\
 & \leq \left( 1 + \frac{\frac{2}{\gamma}-1}{K(1-\gamma)}\right)
 \E\left[\|\hat s_{\hat m}-s\|^2\right]
\\
&\qquad
+ \left(
\frac{\frac{2}{\gamma}+1}{K(1-\gamma)}
+
 \frac{2\gamma}{K(1-\gamma)} \P\{A_n\} \lambda_n  
\right)
   \frac{D_{m_0}}{n}
\end{split}
\end{align*}
and thus as soon as $1 -
  \frac{\frac{2}{\gamma}+1}{K(1-\gamma)}>0$
\begin{align*}
\begin{split}
\|s_{m_0}-s\|^2 
&\leq
\frac{1 + \frac{\frac{2}{\gamma}-1}{K(1-\gamma)}}
{\left(1 -
  \frac{\frac{2}{\gamma}+1}{K(1-\gamma)}\right)\P\{A_n\}}  
\E\left[\|\hat s_{\hat m}-s\|^2\right]\\
&\qquad\qquad
+ \frac{\frac{\frac{2}{\gamma}+1}{K(1-\gamma)}
+
 \frac{2\gamma}{K(1-\gamma)} \P\{A_n\} \lambda_n  
}{\left(1 -
  \frac{\frac{2}{\gamma}+1}{K(1-\gamma)}\right)\P\{A_n\}} 
 \frac{D_{m_0}}{n}
\end{split}
\\
\begin{split}
& \leq
\frac{K(1-\gamma)+\frac{2}{\gamma}-1}{\left(K(1-\gamma)-\frac{2}{\gamma}-1\right)\P\{A_n\}}
\E\left[\|\hat s_{\hat m}-s\|^2\right]\\
&\qquad\qquad
+
\frac{
\frac{2}{\gamma}+1+2\gamma \P\{A_n\}\lambda_n
}{\left(K(1-\gamma)-\frac{2}{\gamma}-1\right)\P\{A_n\}}
 \frac{D_{m_0}}{n}
\end{split}\\
&\leq \frac{\tilde K + \frac{4}{\gamma}}{\tilde K \P\{A_n\}}
\E\left[\|\hat s_{\hat m}-s\|^2\right]
+
\frac{
\frac{2}{\gamma}+1+2\gamma \P\{A_n\}\lambda_n
}{\tilde K \P\{A_n\}}
 \frac{D_{m_0}}{n}\\
\intertext{which yields}
\|s_{m_0}-s\|^2 
&\leq \frac{\tilde K + \frac{4}{\gamma}}{\tilde K \P\{A_n\}}
\E\left[\|\hat s_{\hat m}-s\|^2\right]
+
\left(
\frac{
K(\frac{2}{\gamma}+1)
}{\tilde K \P\{A_n\}}
+ \frac{2K\gamma\lambda_n}{\tilde K}
\right)
 \frac{D_{m_0}}{Kn}
\end{align*}
with $\tilde K = K(1-\gamma)-\frac{2}{\gamma}-1$.
\end{proof}
\noindent Now, let us specify the constants. We take
\[g(\de,\al_0)=\inf_{\al\in (0,\al_0]}\inf_{x\in \left[\frac{1}{2},1-\delta\right]}\left\{x^{\frac{2\al}{2\al+1}}-x\right\}=(1-\de)^{\frac{2\al_0}{2\al_0+1}}-1+\de\in (0,1).\]
Then we take 
\[\ga=\frac{1}{8}g(\de,\al_0) \mbox{ and } K=\frac{\frac{2}{\ga}+1}{\frac{1}{2}-\ga}.\]
This implies $\tilde K=\frac{K}{2}$ and assumptions of the previous lemma are satisfied. We consider now the dependency of $m_0$ on $n$
and prove by induction the following lemma.
\begin{Lemma}\label{induction} If there exists $C_1 > 0$  such that for any $n$,
\begin{align*}
 \E\left[ \|\hat s_{\hat m(n/2)}-s\|^2\right] &\leq C_1
 \left(\frac{2\lambda_{n/2}}{n}\right)^{\frac{2\alpha}{2\alpha+1}}
\end{align*}
then, provided $\lambda_1\geq \Upsilon(\de,p,\al_0)$, where
\begin{equation}\label{erwan}
\Upsilon(\de,p,\al_0)=\frac{8}{p g(\de,\al_0)}\left(\frac{16}{ g(\de,\al_0)}+1\right),
\end{equation}
there exists a constant $C_2$   such that for any $n$,
  \begin{align*}
\|s_{m_0(n)}-s\|^2 +  \lambda_{n} 
\frac{D_{m_0(n)}}{Kn}
&\leq C_2
 \left(\frac{\lambda_{n}}{n}\right)^{\frac{2\alpha}{2\alpha+1}}.
  \end{align*}
\end{Lemma}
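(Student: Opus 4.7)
My plan is to prove the lemma by induction on $n$. Define $R_n := \|s_{m_0(n)} - s\|^2 + \lambda_n D_{m_0(n)}/(Kn)$ and $f(n) := (\lambda_n/n)^{2\alpha/(2\alpha+1)}$; the goal is to exhibit $C_2$ such that $R_n \leq C_2 f(n)$ for every $n$. The base case is handled by the trivial bound $R_n \leq \|s\|^2$, obtained by testing $m = \{0\}$ in the variational definition of $m_0(n)$: together with $\lambda_1 \geq \Upsilon > 0$, this dominates $R_n$ by a constant multiple of $f(n)$ for small $n$.

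For the inductive step, I combine the embedding $\Mcol_{n/2}\subset\Mcol_n$ with Lemma \ref{lem:lemmastart} applied at time $n/2$. First, testing $m = m_0(n/2)$ in the variational definition of $m_0(n)$, then using $\lambda_n \leq 2(1-\delta)\lambda_{n/2}$ from \eqref{hyplambda} together with monotonicity of $(\lambda_n)_n$, gives
\[
R_n \leq a + x b,
\]
where $a := \|s_{m_0(n/2)} - s\|^2$, $b := \lambda_{n/2} D_{m_0(n/2)}/(K(n/2))$ and $x := \lambda_n/(2\lambda_{n/2}) \in [1/2, 1-\delta]$. Next, Lemma \ref{lem:lemmastart} at time $n/2$, combined with the hypothesis $\E[\|\hat s_{\hat m(n/2)}-s\|^2] \leq C_1 f(n/2)$ and with the uniform lower bound $\P\{A_{n/2}\}\geq p$ from Lemma \ref{cirelson}, yields
\[
a \leq A\, C_1 f(n/2) + \bigl(B/\lambda_{n/2} + 4\gamma\bigr)\,b,
\]
with $A = (1+8/(K\gamma))/p$ and $B = 2(2/\gamma+1)/p$. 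Using the induction hypothesis $b \leq R_{n/2} \leq C_2 f(n/2)$ and the identity $f(n) = x^{2\alpha/(2\alpha+1)} f(n/2)$, the goal $R_n \leq C_2 f(n)$ reduces to
\[
A\, C_1 \leq C_2 \bigl(x^{2\alpha/(2\alpha+1)} - x - B/\lambda_{n/2} - 4\gamma\bigr).
\]

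The last estimate is where the specific values of the constants matter. By construction $x^{2\alpha/(2\alpha+1)} - x \geq g(\delta,\alpha_0) = 8\gamma$ uniformly over $x \in [1/2, 1-\delta]$ and $\alpha \in (0,\alpha_0]$; and the threshold $\Upsilon(\delta,p,\alpha_0)$ in \eqref{erwan} is calibrated so that $\lambda_{n/2} \geq \Upsilon$ (guaranteed by $\lambda_1 \geq \Upsilon$ and monotonicity) forces $B/\lambda_{n/2} \leq 2\gamma$. The denominator above is then at least $2\gamma > 0$, and taking $C_2 \geq \max\bigl(\|s\|^2/\lambda_1^{2\alpha/(2\alpha+1)},\ AC_1/(2\gamma)\bigr)$ closes the induction simultaneously for all $\alpha \in (0,\alpha_0]$. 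The main obstacle is precisely this last budget reconciliation: the noise-independent gain $g(\delta,\alpha_0)$ produced by the variational comparison has to absorb both the deviation term $4\gamma$ coming from the proof of Lemma \ref{lem:lemmastart} and the noise-dependent contribution $B/\lambda_{n/2}$, which is exactly why $\Upsilon$ takes the quantitative form in \eqref{erwan} rather than being left as an unspecified large constant.
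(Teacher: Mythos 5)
Your proof is correct and follows essentially the same route as the paper's: a dyadic induction based on $\Mcol_{n/2}\subset\Mcol_n$, Lemma~\ref{lem:lemmastart} applied at scale $n/2$ with $\P\{A_{n/2}\}\geq p$, and the uniform gap $x^{\frac{2\alpha}{2\alpha+1}}-x\geq g(\delta,\alpha_0)=8\gamma$ for $x=\lambda_n/(2\lambda_{n/2})\in[1/2,1-\delta]$, with exactly the paper's calibration of $\Upsilon$ and of $C_2$; your direct substitution $R_n\leq a+xb$ merely bypasses the paper's convex-combination parameter $\beta_n$ and is, if anything, cleaner bookkeeping. The only caveat is the base case (which the paper omits altogether): the null model need not belong to $\Mcol_1$, but testing any fixed model of the collection yields the same trivial bound, so nothing is lost.
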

\begin{proof}
By using $\Mcol_{n/2}\subset \Mcol_n$ and (\ref{eq:bound2}), for any $\beta\in[0,1]$, 
if we denote
$$A=\|s_{m_0(n)}-s\|^2 + \lambda_n \frac{D_{m_0(n)}}{Kn},$$ we have
\begin{align*}
A
& \leq   \|s_{m_0(n/2)}-s\|^2 + \lambda_n \frac{D_{m_0(n/2)}}{Kn}\\
\begin{split}
&\leq \beta \|s_{m_0(n/2)}-s\|^2 + (1-\beta)  \|s_{m_0(n/2)}-s\|^2+ \frac{\lambda_n}{2\lambda_{n/2}} \lambda_{n/2}
\frac{2D_{m_0(n/2)}}{Kn}
\end{split}\\
\begin{split}
 &\leq \beta \frac{\tilde K + \frac{4}{\gamma}}{\tilde K \P\{A_{n/2}\}}
 \E\left[ \|\hat s_{\hat m(n/2)}-s\|^2\right]
 + (1-\beta) \|s_{m_0(n/2)}-s\|^2
\\
&\qquad
+ \left( \beta 
\left(\frac{
K(\frac{2}{\gamma}+1)
}{\tilde K \P\{A_{n/2}\}\lambda_{n/2}}
+ \frac{2K\gamma}{\tilde K}
\right)
+ \frac{\lambda_n}{2\lambda_{n/2}}
\right)
  \frac{2 \lambda_{n/2} D_{m_0(n/2)}}{Kn}.
\end{split}
\end{align*}
As $\lambda_n \leq 2 \lambda_{n/2}$, there exists $\beta_n\in[0,1]$ such that
\begin{align*}
1-\beta_n = \beta_n 
\left(\frac{
K(\frac{2}{\gamma}+1)
}{\tilde K \P\{A_{n/2}\}\lambda_{n/2}}
+ \frac{2K\gamma}{\tilde K}
\right)
+ \frac{\lambda_n}{2\lambda_{n/2}}
\end{align*}
so that
\begin{align*}
\begin{split}
 A  
& \leq \beta_n \frac{\tilde K + \frac{4}{\gamma}}{\tilde K \P\{A_{n/2}\}}
 \E\left[ \|\hat s_{\hat m(n/2)}-s\|^2\right]\\
&\qquad\qquad
 + (1-\beta_n) \left( \|s_{m_0(n/2)}-s\|^2 +   
\frac{2\lambda_{n/2}D_{m_0(n/2)}}{Kn}\right).
\end{split}
\end{align*}
The induction can now be started. We assume now that for all $n'\leq n-1$
\[\|s_{m_0(n')}-s\|^2 +  \lambda_{n'} 
\frac{D_{m_0(n')}}{Kn'}\leq C_2
 \left(\frac{\lambda_{n'}}{n'}\right)^{\frac{2\alpha}{2\alpha+1}}.\]
By assumption,
\[
 \E\left[ \|\hat s_{\hat m(n/2)}-s\|^2\right] \leq C_1
 \left(\frac{2\lambda_{n/2}}{n}\right)^{\frac{2\alpha}{2\alpha+1}},\]
so that,
\begin{align*}
\begin{split}
 A   
& \leq \beta_n \frac{\tilde K + \frac{4}{\gamma}}{\tilde K \P\{A_{n/2}\}}
C_1
 \left(\frac{2\lambda_{n/2}}{n}\right)^{\frac{2\alpha}{2\alpha+1}} + (1-\beta_n) C_2
 \left(\frac{2\lambda_{n/2}}{n}\right)^{\frac{2\alpha}{2\alpha+1}}
\end{split}
\\
\begin{split}
& \leq 
 \left(
 \beta_n \frac{\tilde K + \frac{4}{\gamma}}{\tilde K \P\{A_{n/2}\}}
 \frac{C_1}{C_2} + 1 - \beta_n
 \right)
 \left(
 \frac{2\lambda_{n/2}}{\lambda_n}
 \right)^{\frac{2\alpha}{2\alpha+1}}
 C_2 \left( \frac{\lambda_n}{n} \right)^{\frac{2\alpha}{2\alpha+1}}.
\end{split}
\end{align*}
So, we have to prove that
\begin{eqnarray*}
 \left(
 \beta_n \frac{\tilde K + \frac{4}{\gamma}}{\tilde K \P\{A_{n/2}\}}
 \frac{C_1}{C_2} + 1 - \beta_n
 \right)
 \left(
 \frac{2\lambda_{n/2}}{\lambda_n}
 \right)^{\frac{2\alpha}{2\alpha+1}} &\leq& 1
\end{eqnarray*}  
or equivalently,
\begin{align*}
\left(
 \beta_n \left(
\frac{\tilde K + \frac{4}{\gamma}}{\tilde K \P\{A_{n/2}\}}
 \frac{C_1}{C_2}
+ 
\frac{
K(\frac{2}{\gamma}+1)
}{\tilde K \P\{A_{n/2}\}\lambda_{n/2}}
+ \frac{2K\gamma}{\tilde K}
\right)
 + \frac{\lambda_n}{2\lambda_{n/2}}
 \right)
\left(
  \frac{2\lambda_{n/2}}{\lambda_n}
  \right)^{\frac{2\alpha}{2\alpha+1}} &\leq 1.
\end{align*}
This condition can be rewritten as
\begin{align*}
 \beta_n \left(
\frac{\tilde K + \frac{4}{\gamma}}{\tilde K \P\{A_{n/2}\}}
 \frac{C_1}{C_2}
+
\frac{
K(\frac{2}{\gamma}+1)
}{\tilde K \P\{A_{n/2}\}\lambda_{n/2}}
+ \frac{2K\gamma}{\tilde K}
\right) 
 \left(
   \frac{2\lambda_{n/2}}{\lambda_n}
   \right)^{\frac{2\alpha}{2\alpha+1}} 
&\leq 1 - \left( \frac{\lambda_n}{2\lambda_{n/2}}\right)^{\frac{1}{2\alpha+1} }
\end{align*}
or 
\begin{align*}
  \lambda_{n/2} \geq 
\frac{K(\frac{2}{\gamma}+1)}{\tilde{K}\P\{A_{n/2}\}}
\left[\frac{1}{\beta_n}
\left(
 \left(\frac{\lambda_n}{2\lambda_{n/2}}\right)^{\frac{2\alpha}{2\alpha+1}}
- \frac{\lambda_n}{2\lambda_{n/2}}
\right)
- 
\frac{\tilde K + \frac{4}{\gamma}}{\tilde K \P\{A_{n/2}\}}
 \frac{C_1}{C_2}
-
\frac{2K\gamma}{\tilde K}
\right]^{-1}
\end{align*}
provided the right member is positive.
Under the very mild assumption $2(1-\delta)\lambda_{n/2} \geq
\lambda_n \geq \lambda_{n/2}$, it is sufficient to ensure that (\ref{erwan}) is true. Indeed,  $\lambda_{n/2}\geq \lambda_1$ and using values of the constants we have
\begin{eqnarray*}
&&\frac{K(\frac{2}{\gamma}+1)}{\tilde{K}\P\{A_{n/2}\}}
\left[\frac{1}{\beta_n}
\left(
 \left(\frac{\lambda_n}{2\lambda_{n/2}}\right)^{\frac{2\alpha}{2\alpha+1}}
- \frac{\lambda_n}{2\lambda_{n/2}}
\right)
- 
\frac{\tilde K + \frac{4}{\gamma}}{\tilde K \P\{A_{n/2}\}}
 \frac{C_1}{C_2}
-
\frac{2K\gamma}{\tilde K}
\right]^{-1}\\
&\leq&\frac{2\left(\frac{2}{\gamma}+1\right)}{p}
\left[\frac{g(\de,\al_0)}{2}
- 
\frac{\tilde K + \frac{4}{\gamma}}{\tilde K p}
 \frac{C_1}{C_2}
\right]^{-1}\\
&\leq&\frac{8\left(\frac{2}{\gamma}+1\right)}{p g(\de,\al_0)}\\
&\leq&\frac{8}{p g(\de,\al_0)}\left(\frac{16}{ g(\de,\al_0)}+1\right)
\end{eqnarray*}
if 
\[C_2\geq \frac{4\tilde K + \frac{16}{\gamma}}{\tilde K p}
 \frac{C_1}{g(\de,\al_0)}.\]
\end{proof}%
\noindent Finally, Theorem \ref{main} follows from the previous lemma that gives the following inequality:
\begin{eqnarray*}
\frac{Q(s,n)}{K}&\leq&\inf_{m\in\Mcol_n}\left\{ \|s-s_m\|^2 + \frac{\lambda_n}{Kn} D_m\right\}\\
&\leq&\|s_{m_0(n)}-s\|^2 + 
\frac{ \lambda_{n}}{Kn}D_{m_0(n)}\\
&\leq& C_2
 \left(\frac{\lambda_{n}}{n}\right)^{\frac{2\alpha}{2\alpha+1}}.
\end{eqnarray*}
\subsection{Proofs of Theorem~\ref{main2} and Proposition~\ref{prop:approxmod}}
Theorem~\ref{main} implies that for any
 $s\in MS(\hat s_{\hat  m},\rho_\al)$,
\begin{align*}
  \sup_n\left\{\rho_{n,\al}^{-2}Q(s,n)\right\} <\infty
\end{align*}
or equivalently there exists $C >0$ such  that for any $n$,
\begin{align}
\label{eq:basic}
\inf_{m\in\Mcol_n} \left\{\|s_m-s\|^2 + \frac{\lambda_n}{n} D_m\right\}
    \leq C \rho_{n,\alpha}^2.
\end{align}
By definition of $V_n$, any function $s_m$ with $m\in\Mcol_n$ belongs
to $V_n$ and thus Inequality~\eqref{eq:basic} implies
\begin{align}\label{eq:basicrevL}
  \|P_{V_n}s-s\|^2 \leq C \rho_{n,\alpha}^2
\end{align}
that is $s\in\mathcal{L}^{\alpha}_{V}$.
By definition, $\Mcols$ is a larger collection than $\Mcol_n$ and thus
Inequality~\eqref{eq:basic} also implies that for any $n$,
\[\inf_{m\in\Mcols} \left\{\|s_m-s\|^2 + \frac{\lambda_n}{n} D_m\right\}
    \leq C \rho_{n,\alpha}^2,
\]
which turns out to be a characterization of
$\mathcal{A}^{\alpha}_{\Mcols}$
when $\rho_{n,\alpha}=\left(\frac{\la_n}{n}\right)^{\frac{\alpha}{2\alpha+1}}$ as a
consequence of the following lemma.
\begin{Lemma}
Under Assumptions of Theorem~\ref{main2},
\begin{align}\label{eq:basicrevA}
\sup_n\left\{\left(\frac{\la_n}{n}\right)^{-\frac{2\alpha}{2\alpha+1}}\inf_{m\in\Mcols} \left\{\|s_m-s\|^2 + \frac{\lambda_n}{n} D_m\right\}\right\}<\infty \Leftrightarrow s \in \mathcal{A}^{\alpha}_{\Mcols}.
\end{align}
\end{Lemma}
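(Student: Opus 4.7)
The equivalence in the lemma is the classical link between a Peetre $K$-functional type quantity
\[
Q'(s,n) \;=\; \inf_{m\in\Mcols} \left\{\|s_m-s\|^2 + \frac{\lambda_n}{n} D_m\right\}
\]
and the approximation rate defining $\mathcal{A}^\alpha_{\Mcols}$. My plan is to treat the two directions separately, with the forward direction being essentially an optimal-budget argument and the converse requiring a density argument on the sequence $(\lambda_n/n)$.

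\textbf{Direction ($\Leftarrow$).} Assume $s\in\mathcal{A}^\alpha_{\Mcols}$, so that there exists $C>0$ with $\inf_{\{m\in\Mcols:\,D_m\leq M\}}\|s_m-s\|\leq C M^{-\alpha}$ for every $M>0$. For each $n$, I would pick the integer $M_n=\lfloor(n/\lambda_n)^{1/(2\alpha+1)}\rfloor$ and select $m$ with $D_m\leq M_n$ realising (up to an $\e$) the infimum. Both terms $C^2 M_n^{-2\alpha}$ and $(\lambda_n/n)M_n$ are then of order $(\lambda_n/n)^{2\alpha/(2\alpha+1)}$, so $Q'(s,n)\leq C'(\lambda_n/n)^{2\alpha/(2\alpha+1)}$ uniformly in $n$, which is exactly the LHS of \eqref{eq:basicrevA}.

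\textbf{Direction ($\Rightarrow$).} Assume that $\sup_n(\lambda_n/n)^{-2\alpha/(2\alpha+1)}Q'(s,n)\leq B^2<\infty$. For each $n$, I extract $m_n\in\Mcols$ with $\|s_{m_n}-s\|^2+(\lambda_n/n)D_{m_n}\leq 2B^2(\lambda_n/n)^{2\alpha/(2\alpha+1)}$, whence
\[
D_{m_n}\leq 2B^2(n/\lambda_n)^{1/(2\alpha+1)} \quad\text{and}\quad \|s_{m_n}-s\|\leq \sqrt{2}\,B\,(\lambda_n/n)^{\alpha/(2\alpha+1)}.
\]
To show $s\in\mathcal{A}^\alpha_{\Mcols}$, for each $M>0$ I must produce $m\in\Mcols$ with $D_m\leq M$ and $\|s_m-s\|\leq C'M^{-\alpha}$; the natural candidate is $m=m_n$ for $n=n(M)$ chosen so that $\lambda_n/n$ is of order $(M/(2B^2))^{-(2\alpha+1)}$.

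\textbf{Main obstacle: density of $(\lambda_n/n)$.} The real work is to exhibit such an $n(M)$ uniformly in $M$, since $(\lambda_n/n)_n$ is only controlled in a dyadic sense. I would restrict attention to the dyadic subsequence $u_k:=\lambda_{2^k}/2^k$: Assumption \eqref{hyplambda} gives $u_{k+1}\leq(1-\delta)u_k$, while monotonicity of $(\lambda_n)_n$ gives $u_{k+1}\geq u_k/2$. Combined with $u_k\to 0$ (from~\eqref{hyplambdalim}), this forces $(u_k)$ to be strictly decreasing with consecutive ratios in $[1/2,1-\delta]$, and in particular for every $t\in(0,u_0]$ there is a $k$ with $u_k\in[t,2t)$. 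Applying this with $t=(M/(2B^2))^{-(2\alpha+1)}$ and $n=2^k$ yields $D_{m_n}\leq M$ and $\|s_{m_n}-s\|\leq C' M^{-\alpha}$ with $C'$ depending only on $B$, $\alpha$ and $\delta$. For the remaining (bounded) range of small $M$, the trivial model $m=\{0\}\in\Mcols$ together with $\|s\|<\infty$ absorbs the claim into the constant $C'$. Combining the two regimes gives $s\in\mathcal{A}^\alpha_{\Mcols}$ and completes the equivalence.
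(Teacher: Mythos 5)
Your proposal is correct and follows essentially the same route as the paper: the easy direction is the same optimal-budget computation (the paper minimizes $\tilde C_1 M^{-2\alpha}+T^2M$ over $M$ with $T^2=\lambda_n/n$, you plug in the near-optimal $M_n$ directly), and the hard direction uses exactly the paper's dyadic sandwich $\lambda_n\leq\lambda_{2n}\leq 2\lambda_n$ to locate, for each budget $M$, an index $n$ with $(\lambda_n/n)^{-1/(2\alpha+1)}$ comparable to $M$. Your explicit treatment of the bounded range of small $M$ is a detail the paper leaves implicit, but the arguments are otherwise the same.
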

\begin{proof}
We denote 
\[
\tilde m(n)
=\arg\min_{m\in \Mcols }\left\{\|s_{m}-s\|^2 + \frac{\la_n}{n}
  D_m\right\}.
\]
First, let us assume that for any $n$
\[
\|s_{\tilde m(n)}-s\|^2 + \frac{\la_n}{n} D_{\tilde m(n)} \leq C_1
  \left(\frac{\la_n}{n}\right)^{\frac{2\al}{2\al+1}}
\]
where $C_1$ is a constant. Then, 
\[
D_{\tilde m(n)}\leq C_1
\left(\frac{\la_n}{n}\right)^{-\frac{1}{1+2\alpha}}.
\]
Using $\la_n\leq \la_{2n}\leq 2\la_n$,
 for  $M\in\Ne^*$, as soon as $M\geq C_1 \left(\la_1\right)^{-\frac{1}{1+2\alpha}}$,
there
exists $n\in\Ne^*$ such that
\begin{equation}
C_1\left(\frac{\la_n}{n}\right)^{-\frac{1}{1+2\alpha}}\leq
M< C_1 \left(\frac{\la_{2n}}{2n}\right)^{-\frac{1}{1+2\alpha}}
\leq C_1 2^{\frac{1}{1+2\alpha}} \left(\frac{\la_n}{n}\right)^{-\frac{1}{1+2\alpha}}.
\label{smineq}
\end{equation}
Then,
\begin{align*}
\inf_{\{m\in \Mcols :\: D_m\leq
  M\}}\|s_m-s\|^2&\leq \inf_{\left\{m \in \Mcols
:\: D_m\leq M\right\}}\left\{\|s_m-s\|^2+
\frac{\la_n}{n} D_{m}\right\}\\
&\leq\inf_{\left\{m\in \Mcols
:
\:\ D_m\leq
    C_1\left(\frac{\la_n}{n}\right)^{-\frac{1}{1+2\alpha}}\right\} }\left\{\|s_m-s\|^2+
\frac{\la_n}{n} D_{m}\right\}\\
&\leq C_1
  \left(\frac{\la_n}{n} \right)^{\frac{2\al}{1+2\alpha}}\\
&\leq C_1^{2\alpha+1} 2^{\frac{2 \al}{1+2\alpha}} M^{-2\alpha}.
\end{align*}
Conversely, assume that there exists $\tilde C_1$ satisfying
\[
\inf_{\{m\in \Mcols:\: D_m\leq M\}}\|s_m-s\|^2\leq \tilde C_1
M^{-2\al}.
\]
Then for any $T>0$,
\begin{eqnarray*}
\inf_{m\in \Mcols}\left\{\|s_m-s\|^2+ T^2
  D_m\right\}&=&\inf_{M\in\Ne^*}\inf_{\{m\in
  \Mcols:\:D_m=M\}}\left\{\|s_m-s\|^2+T^2 M\right\}\\
&\leq&\inf_{M\in\Ne^*}\left\{\tilde C_1 M^{-2\al}+ T^2 M\right\}\\
&\leq&\inf_{x\in\R^*_+}\left\{\tilde C_1 x^{-2\al}+ T^2 (x+1)\right\}\\
&\leq&                   \tilde C_1\left(\frac{T^2}{2\alpha
\tilde    C_1}\right)^{\frac{2\al}{1+2\alpha}}
+T^2 \left(\left(\frac{T^2}{2\alpha
\tilde C_1}
\right)^{-\frac{1}{1+2\al}}+1\right)\\
&\leq&C_1 \left(T^2 \right)^\frac{2\alpha}{1+2\alpha},
\end{eqnarray*}
where $C_1$ is a constant. 
\end{proof}

\noindent We have proved so far that $MS(\hat s_{\hat
  m},\rho_\al) \subset \mathcal{L}^{\alpha}_{V} \cap
\mathcal{A}^{\alpha}_{\Mcols}$. It remains to prove the converse
inclusion. Corollary~\ref{corollaire} and the previous lemma imply that 
it suffices to prove that
inequalities~\eqref{eq:basicrevL} and \eqref{eq:basicrevA} imply
inequality~\eqref{eq:basic} (possibly with a different constant $C$).

\noindent Let $s\in\mathcal{L}^{\alpha}_{V} \cap
\mathcal{A}^{\alpha}_{\Mcols}$. By inequality~\eqref{eq:basicrevA}, for every $n$,
there exists a model $m\in\Mcols$ such that
\begin{align*}
\|s_m-s\|^2 + \frac{\lambda_n}{n} D_m
    \leq C \rho_{n,\alpha}^2.
  \end{align*}
  By definition of $\Mcols$, there exists $k$ such that $m \in \Mcol'_k$.

\noindent If $k\leq n$ then $m \in \Mcol_n$ and thus
\begin{align*}
\inf_{m\in\Mcol_n}  \left\{\|s_m-s\|^2 + \frac{\lambda_n}{n} D_m\right\}
    \leq C \rho_{n,\alpha}^2.
\end{align*}

\noindent Otherwise $k>n$ and let $m'\in \Mcol$ be the model such that
$\mathcal{I}_{m}=\mathcal{I}_{m'} \cap \mathcal{J}_k$
as defined in Section~\ref{basis}. We define $m''\in\Mcol_n$ by
its index set $\mathcal{I}_{m''}=\mathcal{I}_{m'} \cap \mathcal{J}_n$.  
Remark that $m''\subset m$ and  $s_m - s_{m''}\in V_n^{\perp}$, so
\begin{align*}
\|s_{m''}-s\|^2 
+ \frac{\lambda_n}{n} D_{m''}
&=
 \|s_{m''}-s_{m}\|^2 + \|s_{m}-s\|^2  
+ \frac{\lambda_n}{n} D_{m''}\\
& \leq 
 \|P_{V_n}s - s\|^2
+ \|s_{m}-s\|^2  
+ \frac{\lambda_n}{n} D_{m}
\\
& \leq C \rho_{n,\alpha}^2.
\end{align*}
Theorem \ref{main2} is proved.

\noindent The proof of Proposition~\ref{prop:approxmod} relies on the definition
of $\widetilde{\pen}_n(m)$. Recall that
for any model $m \in \Mcol_n'$ there is a model $\tilde m\in\Mcol$
such that
\[m=\mbox{span}\left\{\p_i:\quad i\in \mathcal{I}_{\tilde m}\cap \mathcal{J}_n\right\}\]
and that \begin{align*}
 \widetilde{\pen}_n(m) = \frac{\lambda_n}{n} D_{\tilde m}.
\end{align*}
One deduces
\[
\|s_{m}-s\|^2 + \widetilde{\pen}_n(m)
=
\|s_{m}-s\|^2 + \frac{\lambda_n}{n} D_{\tilde m} \geq
\|s_{\tilde m}-s\|^2 + \frac{\lambda_n}{n} D_{\tilde m}
\]
and thus
\begin{align*}
 \inf_{m\in\mathcal{M}_n} \left\{\|s_{m}-s\|^2 +  \widetilde{\pen}_n(m)\right\} \leq C \rho_{n,\alpha}^2
\implies
 \inf_{m\in\Mcol}\left\{
\|s_{m}-s\|^2 + \frac{\lambda_n}{n} D_{m}\right\} \leq C \rho_{n,\alpha}^2.
\end{align*}
Mimicking the proof of Theorem~\ref{main2}, one obtains  Proposition~\ref{prop:approxmod}.

\subsection{Proof of Proposition \ref{nestedtheo}}
In the same spirit as in the proof of Theorem~\ref{main}, for any $n$, we denote
\begin{equation}\label{m0nest}
m_0(n)=\arg\min_{m\in\mathcal{ M}}\left\{\|s_{m}-s\|^2 + \frac{\pen(m)}{4}\right\}=\arg\min_{m\in\mathcal{ M}}\left\{\|s_{m}-s\|^2 + \frac{\la_nD_m}{4n}\right\}. 
\end{equation}
(we have set $K=4$)
and
\begin{equation}\label{mnest}
\hat m(n)=\arg\min_{m\in\mathcal{ M}}\left\{-\|\hat {s}_{m}\|^2 +
  \pen(m)\right\}=\arg\min_{m\in\mathcal{ M}}\left\{-\|\hat s_{m}\|^2 + \frac{\la_nD_m}{n}\right\}. 
\end{equation}
In the nested case, Lemma~\ref{lem:lemmastart} becomes the following
much stronger lemma:
\begin{Lemma}\label{initnest}
For any $n$, almost surely
\begin{equation}\label{equanest}\|s_{m_0(n)}-s\|^2  \leq \|\hat s_{\hat m(n)}-s\|^2.
\end{equation}
\end{Lemma}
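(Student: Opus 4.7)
\noindent\textbf{Proof proposal for Lemma \ref{initnest}.}

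The plan is to exploit the nested structure of $\Msievecol$: any two models $m$ and $m'$ in $\Msievecol$ satisfy $m\subset m'$ or $m'\subset m$, so they are simultaneously spanned by a common prefix of the orthonormal family $\{\phi_{00},\psi_{jk}\}$. Let $e_i$ denote this reindexed basis, write $\beta_i=\langle s,e_i\rangle$ and $\hat\beta_i=\beta_i+n^{-1/2}W_{e_i}$, so that for any $m\in\Msievecol$ of dimension $D_m=N$ one has $\|s_m-s\|^2=\sum_{i>N}\beta_i^2$ and, by Pythagoras,
\[
\|\hat s_m-s\|^2 \;=\; \|s_m-s\|^2+\tfrac{1}{n}\sum_{i\le N}W_{e_i}^2.
\]
Set $N_0=D_{m_0(n)}$ and $\hat N=D_{\hat m(n)}$.

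If $\hat N\le N_0$, nestedness gives $\hat m\subset m_0$, hence $\|s_{m_0}-s\|^2\le \|s_{\hat m}-s\|^2\le\|\hat s_{\hat m}-s\|^2$ and we are done. The interesting case is $\hat N>N_0$; I will turn the two optimality inequalities into a quantitative lower bound on the noise restricted to the indices between $N_0$ and $\hat N$. By definition of $\hat m(n)$ in~\eqref{mnest}, comparing with $m_0$ gives
\[
\sum_{N_0<i\le\hat N}\hat\beta_i^2 \;\ge\; \tfrac{\lambda_n}{n}(\hat N-N_0),
\]
while the definition of $m_0(n)$ in~\eqref{m0nest}, comparing with $\hat m$, yields
\[
\|s_{m_0}-s\|^2-\|s_{\hat m}-s\|^2 \;=\; \sum_{N_0<i\le\hat N}\beta_i^2 \;\le\; \tfrac{\lambda_n}{4n}(\hat N-N_0).
\]

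The conclusion now follows from the elementary inequality $\hat\beta_i^2\le 2\beta_i^2+\tfrac{2}{n}W_{e_i}^2$, summed over $N_0<i\le\hat N$: combined with the two displays above it forces
\[
\tfrac{2}{n}\sum_{N_0<i\le\hat N}W_{e_i}^2 \;\ge\; \tfrac{\lambda_n}{n}(\hat N-N_0)-\tfrac{\lambda_n}{2n}(\hat N-N_0) \;=\;\tfrac{\lambda_n}{2n}(\hat N-N_0),
\]
so that $\tfrac{1}{n}\sum_{N_0<i\le\hat N}W_{e_i}^2\ge\tfrac{\lambda_n}{4n}(\hat N-N_0)\ge\sum_{N_0<i\le\hat N}\beta_i^2$. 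Plugging this back into the Pythagorean identity gives
\[
\|s_{m_0}-s\|^2 \;\le\; \|s_{\hat m}-s\|^2+\tfrac{1}{n}\sum_{i\le\hat N}W_{e_i}^2 \;=\; \|\hat s_{\hat m}-s\|^2,
\]
which is the desired bound. The only real step is the case $\hat N>N_0$, where the main point is to notice that the factor $4$ gap between the penalty constants in~\eqref{m0nest} and~\eqref{mnest} is exactly what allows the $(a+b)^2\le 2a^2+2b^2$ inequality to absorb the bias contribution and leave enough room for the noise term to dominate; this is where uniqueness and the almost sure nature of the statement enter (the minimizers are a.s. well defined by Theorem~\ref{massart}), and this is the step I would expect to require the most care in writing out cleanly.
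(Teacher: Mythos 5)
Your proposal is correct and is essentially the paper's own argument: the same case split on which of $\hat m$ and $m_0$ contains the other, the same two optimality inequalities compared against each other, the same $(a+b)^2\le 2a^2+2b^2$ absorption exploiting the factor $4$ between the two penalties, and the same Pythagorean reassembly at the end. The only difference is cosmetic — you write everything coordinatewise in a common reindexed basis, whereas the paper phrases the identical inequalities in terms of the projections onto $\hat m(n)\setminus m_0(n)$.
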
 
\begin{proof}
As the models are embedded, either $\hat{m}(n) \subset m_0(n)$ or
$m_0(n) \subset  \hat{m}(n) $.

\noindent In the first case,
$\|s_{m_0(n)}-s\|^2  \leq \|s_{\hat m(n)}-s\|^2\leq \|\hat s_{\hat
  m(n)}-s\|^2$ and thus (\ref{equanest}) holds.

\noindent Otherwise, by construction
\begin{align*}
&\begin{cases}
\|s_{m_0(n)}-s\|^2 + \frac{\la_nD_{m_0(n)}}{4n} \leq \|s_{\hat{m}(n)}-s\|^2 + \frac{\la_nD_{\hat{m}(n)}}{4n}\\
-\|\hat s_{\hat{m}(n)}\|^2 + \frac{\la_nD_{\hat{m}(n)}}{n} \leq -\|\hat s_{m_0(n)}\|^2 + \frac{\la_nD_{m_0(n)}}{n}
\end{cases}
\intertext{and thus as $m_0(n) \subset \hat{m}(n)$}
&\begin{cases}
\| s_{\hat{m}(n) \setminus m_0(n)}\|^2 \leq
\frac{\la_nD_{\hat{m}(n)}}{4n} - \frac{\la_nD_{m_0(n)}}{4n} \\
\frac{\la_nD_{\hat{m}(n)}}{n} - \frac{\la_nD_{m_0(n)}}{n}\leq \|\hat
s_{\hat{m}(n)\setminus m_0(n)}\|^2.
\end{cases}\quad
\end{align*}
Combining these two inequalities yields
\begin{align*}
  \| s_{\hat{m}(n) \setminus m_0(n)}\|^2 &\leq \frac{1}{4} \|\hat
s_{\hat{m}(n)\setminus m_0(n)}\|^2 \\
 & \leq \frac{1}{2} \left( \|\hat s_{\hat{m}(n)\setminus m_0(n)} - s_{\hat{m}(n)\setminus m_0(n)}\|^2
 + \| s_{\hat{m}(n) \setminus m_0(n)}\|^2
\right)
\intertext{and thus}
\| s_{\hat{m}(n) \setminus m_0(n)}\|^2 & \leq \|\hat s_{\hat{m}(n)\setminus m_0(n)} - s_{\hat{m}(n)\setminus m_0(n)}\|^2.
\end{align*}
Now, (\ref{equanest}) holds as
\begin{align*}
  \|s_{m_0(n)}-s\|^2 & = \|s_{\hat m(n)}-s\|^2 + \| s_{\hat{m}(n)
      \setminus m_0(n)}\|^2\\
&\leq \|s_{\hat m(n)}-s\|^2 +  \|\hat s_{\hat{m}(n)\setminus m_0(n)}
- s_{\hat{m}(n)\setminus m_0(n)}\|^2\\
 &\leq \|s_{\hat m(n)}-s\|^2 +  \|\hat s_{\hat{m}(n)}
- s_{\hat{m}(n)}\|^2 = \|\hat s_{\hat m(n)}-s\|^2.
\end{align*}
\end{proof}

\noindent Now we can conclude the proof of Proposition~\ref{nestedtheo} with an induction similar to the one used in the proof of
Lemma~\ref{induction}. Indeed, let 
\begin{align*}
  A = \|s_{m_0(n)}-s\|^2 + \frac{\la_nD_{m_0(n)}}{4n} ,
\end{align*}
\begin{align*}
A  & \leq   \|
  s_{m_0(n/2)}-s\|^2 + \frac{\la_nD_{m_0(n/2)}}{4n}\\
& \leq \beta_n \E(\| \hat s_{\hat m(n/2)}-s\|^2) + (1-\beta_n)  \|
  s_{m_0(n/2)}-s\|^2 +
  \frac{\lambda_{n}}{2\lambda_{n/2}}\frac{\la_{n/2}D_{m_0(n/2)}}{4(n/2)}.
\intertext{The choice $\beta_n = 1 -
  \frac{\lambda_{n}}{2\lambda_{n/2}}$ is such that $\delta \leq
  \beta_n \leq \frac{1}{2}$ and it implies}
A
 & \leq
 \beta_n \E(\| \hat s_{\hat m(n/2)}-s\|^2) + (1-\beta_n)  \left( \|
  s_{m_0(n/2)}-s\|^2 + \frac{\la_{n/2}D_{m_0(n/2)}}{4(n/2)} \right).
\intertext{Using now almost the same induction as in Theorem~\ref{main}, we obtain}
A
& \leq \beta_n C_1^2
  \left(\frac{2\lambda_{n/2}}{n}\right)^{\frac{2\al}{1+2\alpha}} + (1-\beta_n) C_2
  \left(\frac{2\lambda_{n/2}}{n}\right)^{\frac{2\al}{1+2\alpha}} \\
 & \leq 
  \left(\frac{2\lambda_{n/2}}{\lambda_n}\right)^{\frac{2\al}{1+2\alpha}}
(C_1^2 \beta_n C_2^{-1} + (1 -\beta_n))
C_2
  \left(\frac{\lambda_{n}}{n}\right)^{\frac{2\al}{1+2\alpha}}.
\end{align*}
where $C_1$ is a constant. It suffices thus to verify that \[\left(\frac{2\lambda_{n/2}}{\lambda_n}\right)^{\frac{2\al}{1+2\alpha}}
\left(C_1^2 \beta_n C_2^{-1} + (1 -\beta_n)\right)
\leq 1,\] which is the case as soon as $C_2 \geq \frac{C_1^2}{2g(\de,\al)}$.
\subsection{Space embeddings}
\label{sec:space-embeddings}
In this paragraph we provide many embedding properties between the functional spaces considered in Section \ref{part3}. Let us recall the following definitions:
\begin{eqnarray*}
\mathcal{B}^\alpha_{p,\infty}&=&\left\{s \in \mathbb{L}_2([0,1]):\quad \sup_{J\in \mathbb{N}}2^{J(\alpha-\frac{1}{p}+\frac{1}{2})p}\sum_{k=0}^{2^j-1}|\beta_{jk}|^p<\infty\right\};\\
\mathcal{B}^{\frac{\alpha}{1+2\alpha}}_{2,\infty}&=&\left\{s \in \mathbb{L}_2([0,1]): \quad \sup_{J\in \mathbb{N}}2^{\frac{2 J \alpha}{1+2\alpha}}\sum_{j\geq J}\sum_{k=0}^{2^j-1}\beta_{jk}^2<\infty\right\};\\
\mathcal{A}^{\alpha}_{_{\Mstratcol}}&=&\left\{s \in \mathbb{L}_2([0,1]): \quad \sup_{J \in \mathbb{N}}2^{2J \alpha}\sum_{j\geq J} \sum_{k= \lfloor2^{J}(j-J+1)^{-\theta}\rfloor}^{2^j} |\beta_{j}|_{(k)}^2<\infty\right\};\\
\mathcal{W}_{\frac{2}{1+2\alpha}}&=&\left\{s \in \mathbb{L}_2([0,1]): \quad \sup_{u>0}u^{\frac{2}{1+2\alpha}}\sum_{j =0}^\infty\sum_{k=0}^{2^j-1}\mathbf{1}_{_{|\beta_{jk}|>u}}<\infty\right\}.\\
\end{eqnarray*}

\subsubsection{Space embeddings : part $I$}

\begin{eqnarray*}
\bigcup_{p\geq 1, p>\frac{2}{1+2\alpha}}\mathcal{B}^\alpha_{p,\infty}
\overset{(i)}{\subsetneq}  \mathcal{A}^{\alpha}_{_{\Mstratcol}}\overset{(ii)}{\subsetneq} 
\mathcal{W}_{\frac{2}{1+2\alpha}}.
\end{eqnarray*}


\noindent {\bf{Proof of $(i)$.}} \\
Let $s$ belong to $B^\alpha_{p,\infty}$ with $p\geq 1$ and
$p > \frac{2}{1+2\alpha}$ and, for any scale $j\in \mathbb{N}$, let us  denote by $\left(|\beta_j|_{(k)}\right)_k$ the sequence of the non-decreasing reordered wavelet coefficients of any level $j$. Then there exists a non negative constant $C$ such that for any $j \in \mathbb{N}$
\[
\sum_{k=1}^{2^j} |\beta_{j}|_{(k)}^p \leq C 2^{-jp(\alpha+1/2-1/p)}.
\]

\noindent Fix $J \in \mathbb{N}$. If $p<2$, according to Lemma~4.16 of \cite{mas}, for all $j$ larger than $J$
\begin{align*}
\sum_{k=\lfloor2^{J}(j-J+1)^{-\theta}\rfloor+1}^{2^j}
|\beta_{j}|_{(k)}^2 &\leq C^{2/p} \ 2^{-2j(\alpha+1/2-1/p)}
\left(\lfloor2^{J}(j-J+1)^{-\theta}\rfloor\right)^{1-2/p}\\
& \leq C^{2/p} \ 2^{-2J\alpha} 2^{-2(j-J)(\alpha+1/2-1/p)}(j-J+1)^{\theta(2/p-1)}.
\end{align*}
Summing over the indices $j$ larger than $J$ yields
\begin{align*}
\sum_{j\geq J}
\sum_{k=\lfloor2^{J}(j-J+1)^{-\theta}\rfloor}^{2^j}
|\beta_{j}|_{(k)}^2 & \leq C^{2/p} 2^{-2J\alpha} \sum_{j'\geq 0} 2^{-2j'(\alpha+1/2-1/p)}(j'+1)^{\theta(2/p-1)}
\end{align*}
and thus
\begin{align*}
\sup_{J \geq 0} 2^{2J\alpha}\sum_{j \geq J} \sum_{k=
  \lfloor2^{J}(j-J+1)^{-\theta}\rfloor}^{2^j} |\beta_{j}|_{(k)}^2
\leq C^{2/p} \sum_{j'\geq 0}
2^{-2j'(\alpha+1/2-1/p)}(j'+1)^{\theta(2/p-1)} < \infty
.\end{align*} 
So $s$ belongs to $\mathcal{A}^\alpha_{{\Mstratcol}}$.\\

\noindent For the case $p=2$, 
\begin{align*}
\sum_{j\geq J}
\sum_{k = \lfloor2^{J}(j-J+1)^{-\theta}\rfloor}^{2^j}
|\beta_{j}|_{(k)}^2 & \leq
\sum_{j \geq J}
\sum_{k= 1}^{2^j}
|\beta_{j}|_{(k)}^2 \leq \sum_{j \geq J} C 2^{-2j\alpha}
\leq C \frac{2^{-2J\alpha}}{1-2^{-2\alpha}}. 
\end{align*}
Thus 
$$\sup_{J \in \mathbb{N}}2^{2J \alpha}\sum_{j\geq J} \sum_{k= \lfloor2^{J}(j-J+1)^{-\theta}\rfloor}^{2^j} |\beta_{j}|_{(k)}^2<\infty.$$
\noindent So $s$ also belongs to $\mathcal{A}^\alpha_{{\Mstratcol}}$.\\

\noindent We conclude that for any $p\geq 1$ satisfying $p > \frac{2}{1+2\alpha}$, \: $B^{\alpha}_{p,\infty} \subseteq
\mathcal{A}^\alpha_{{\Mstratcol}}.$\\
\noindent Let us now prove the strict inclusion by considering the function $s_0$ defined as follows:
\[
s_0 = \sum_{j\geq 0} \sum_{k=0}^{2^j-1}\beta_{jk}\psi_{jk}=\sum_{j\geq 0}2^{-\sqrt{j}} \psi_{j,0}.
\]

\noindent For any $(\alpha',p)$ such that $\alpha'>\max(\frac{1}{p}-\frac{1}{2},0)$
\begin{align*}
  2^{(\alpha'-\frac{1}{p}+\frac{1}{2})p j} \sum_{k=0}^{2^j-1} |\beta_{j,k} |^p = 2^{(\alpha'-\frac{1}{p}+\frac{1}{2})p j}2^{-\sqrt{j}p}
\end{align*}
and thus goes to $+\infty$ when $j$ goes to $+\infty.$ It implies
that $s_0$ does not belong to $\mathcal{B}^{\alpha}_{p,\infty}$ for any $p>\frac{2}{1+2\alpha}$.\\

\noindent Now for any $J \in \mathbb{N}$,
\begin{align*}
 2^{2J\alpha}\sum_{j \geq J} \sum_{k\geq
  \lfloor2^{J}(j-J+1)^{-\theta}\rfloor}^{2^j} |\beta_{j}|_{(k)}^2
& =  2^{2J\alpha}
\sum_{ j \geq \min \{ j' \geq J: 2^J (j'-J+1)^{-\theta} <1\}}
 2^{-2\sqrt{j}}\\
& \leq 2^{2J\alpha}
\sum_{ j \geq 2^{J/\theta}+J}
 2^{-2\sqrt{j}},
\end{align*}
which implies
\[
\sup_{J\geq 0} 2^{2J\alpha}\sum_{j \geq J} \sum_{k\geq
  \lfloor2^{J}(j-J+1)^{-\theta}\rfloor}^{2^j} |\beta_{j}|_{(k)}^2 < \infty
\]
 and thus
$s_0 \in \mathcal{A}^\alpha_{{\Mstratcol}}$. Hence $(i)$ is proved. \hfill
  $\blacksquare$\par\noindent \\

\noindent {\bf{Proof of $(ii)$.}} \\
 There is no doubt that $\mathcal{A}^{\alpha}_{_{\Mstratcol}}\subseteq
\mathcal{W}_{\frac{2}{1+2\alpha}}$ since $\mathcal{W}_{\frac{2}{1+2\alpha}}=\mathcal{A}^{\alpha}_{_{\Mwealthcol}}.$ The strict inclusion is a direct consequence of  $(i\nu ),$ just below.\hfill
  $\blacksquare$\par\noindent \\

\subsubsection{Space embeddings : part $II$}

\begin{eqnarray*}
\bigcup_{p\geq\max(1,\frac{2}{(1+2\alpha)^{-1}+2\alpha})}\mathcal{B}^\alpha_{p,\infty}
&\overset{(iii)}{\subseteq} & \mathcal{B}^{\frac{\alpha}{1+2\alpha}}_{2,\infty}\cap \mathcal{A}^{\alpha}_{_{\Mstratcol}}  \overset{(i\nu)}{\subsetneq}  \mathcal{B}^{\frac{\alpha}{1+2\alpha}}_{2,\infty}\cap\mathcal{W}_{\frac{2}{1+2\alpha}}.
\end{eqnarray*}

\noindent {\bf{Proof of $(iii)$.}} \\
Let $\alpha>0$ and  $p\geq 1$ satisfying $p\geq 2((1+2\alpha)^{-1}+2\alpha)^{-1}$.  Using the classical Besov embeddings $\mathcal{B}^{\alpha}_{p,\infty}\subseteq \mathcal{B}^{\frac{\alpha}{1+2\alpha}}_{2,\infty}$, and, according to $(i)$, we have $\mathcal{B}^{\alpha}_{p,\infty} \subsetneq \mathcal{A}^{\alpha}_{_{\Mstratcol}}$. Hence $\mathcal{B}^{\alpha}_{p,\infty} \subseteq \mathcal{B}^{\frac{\alpha}{1+2\alpha}}_{2,\infty} \cap \mathcal{A}^{\alpha}_{_{\Mstratcol}}$ and $(iii)$ is proved. \hfill
  $\blacksquare$\par\noindent \\

\noindent {\bf{Proof of $(i\nu)$.}} \\
We already know that $\mathcal{B}^{\frac{\alpha}{1+2\alpha}}_{2,\infty} \cap \mathcal{A}^{\alpha}_{_{\Mstratcol}}\subseteq
\mathcal{B}^{\frac{\alpha}{1+2\alpha}}_{2,\infty} \cap
\mathcal{W}_{\frac{2}{1+2\alpha}}.$ The strict inclusion is a direct
consequence of  $(\nu i)$ proved in the next subsection.\hfill
  $\blacksquare$\par\noindent \\

\subsubsection{A non-embedded case}

\begin{eqnarray*}
\mathcal{A}^{\alpha}_{_{\Mstratcol}}  &\overset{(\nu)}{\not\subset}& \mathcal{B}^{\frac{\alpha}{1+2\alpha}}_{2,\infty}\cap\mathcal{W}_{\frac{2}{1+2\alpha}} \quad \: \hbox{and} \quad \:
\mathcal{B}^{\frac{\alpha}{1+2\alpha}}_{2,\infty}\cap\mathcal{W}_{\frac{2}{1+2\alpha}}\overset{(\nu i)}{\not\subset}\mathcal{A}^{\alpha}_{_{\Mstratcol}}.\\
\end{eqnarray*}

\noindent {\bf{Proof of $(\nu)$.}} \\
Let us consider the function $s_0 \in \mathcal{A}^{\alpha}_{_{\Mstratcol}}$ defined in the proof of $(i)$. We already know that it does not belong to $\mathcal{B}^{\alpha'}_{p,\infty}$ for any $(\alpha',p)$ satisfying $\alpha'>\max(\frac{1}{p}-\frac{1}{2},0).$ As a consequence for the case $(\alpha',p)=(\frac{\alpha}{1+2\alpha},2)$ 
where $\alpha>0$, we deduce that $s_0$ does not belong to $\mathcal{B}^{\frac{\alpha}{1+2\alpha}}_{2,\infty}.$ \\ 
Moreover, we immediately deduce that  $\mathcal{A}^{\alpha}_{_{\Mstratcol}}  \not\subset  \mathcal{B}^{\frac{\alpha}{1+2\alpha}}_{2,\infty}\cap\mathcal{W}_{\frac{2}{1+2\alpha}}$.\hfill $\blacksquare$\par\noindent \\

\noindent {\bf{Proof of $(\nu i)$.}} \\
Let $s_1\in \mathbb{L}^2([0,1])$ whose wavelet expansion is given by
\[
s_1 = \sum_{j=0}^\infty \sum_{k=0}^{2^j-1} \beta_{jk} \psi_{jk}.
\]
We set
\begin{align*}
  \beta_{jk} = \begin{cases}
 2^{- \frac{j}{2}} & \text{if $k<2^{\frac{j}{1+2\alpha}}$}\\
0 & \text{otherwise}.
\end{cases}
\end{align*}
We are going to prove that $s_1\in \mathcal{B}^{\frac{\alpha}{1+2\alpha}}_{2,\infty}
\cap \mathcal{W}_{\frac{2}{1+2\alpha}}$ while $s_1\notin
\mathcal{A}^\alpha_{\Mstratcol}.$

\noindent Summing at a given scale $j$ yields
\begin{align*}
  \sum_{k=0}^{2^j-1} \beta_{jk}^2 = 2^{\frac{j}{1+2\alpha}} 2^{-j} 
  = 2^{-\frac{2 \alpha j}{1+2\alpha}}
  \end{align*}
and thus $s_1 \in B^{\frac{\alpha}{1+2\alpha}}_{2,\infty}.$

\noindent Let $0<u<1$ and $j_u$ the real number such that $2^{j_u}=u^{-2}.$ Then

\begin{eqnarray*}
u^{\frac{2}{1+2\alpha}}\sum_{j= 0}^\infty\sum_{k=0}^{2^j-1}\mathbf{1}_{_{|\beta_{jk}|>u}}&=&u^{\frac{2}{1+2\alpha}}\sum_{j<j_u}\sum_{k=0}^{2^j-1}\mathbf{1}_{_{|\beta_{jk}|>u}}\\
&=& u^{\frac{2}{1+2\alpha}}\sum_{j<j_u}2^{\frac{j}{1+2\alpha}}\\
&\leq&2^{\frac{1}{1+2\alpha}}(2^{\frac{1}{1+2\alpha}}-1)^{-1}.
\end{eqnarray*}
\noindent So $$\sup_{u>0}u^{\frac{2}{1+2\alpha}}\sum_{j= 0}^\infty\sum_{k=0}^{2^j-1}\mathbf{1}_{_{|\beta_{jk}|>u}}<\infty$$ and $s_1 \in \mathcal{W}_{\frac{2}{1+2\alpha}}$.\\

\noindent Let us now prove that $s_1$ does not belong to $\mathcal{A}^\alpha_{\Mstratcol}.$  Fix $J \in \mathbb{N}$ large enough. Then
\begin{eqnarray*}
E_J & = &\sum_{j\geq J} \sum_{k=
  \lfloor2^{J}(j-J+1)^{-\theta}\rfloor}^{2^j-1} |\beta_{j}|_{(k)}^2\\
  &=& \sum_{ j \geq J}
  \max\left(0,2^{j/(2\alpha+1)}-\frac{2^J}{(j-J+1)^\theta}\right) 2^{-j}. 
\end{eqnarray*}
Let $J^\star$ be the real number such that
$2^{\frac{J^\star}{1+2\alpha}}=\frac{2^J}{(J^\star-J+1)^\theta}$.\\
From $ J^\star = (2\alpha+1) J - (2\alpha+1)\theta\log_2(J^\star-J+1)$
one deduces thus
$J^\star  \leq (2\alpha+1) J$,
which implies 
  $J^\star  \geq (2\alpha+1) J - (2\alpha+1)\theta\log_2(2\alpha J +1)$,
and finally
$J^\star \leq (2\alpha+1) J - (2\alpha+1)\theta\log_2(2\alpha J +1
  - (2\alpha+1)\theta\log_2(2\alpha J +1)).$ So,
\begin{align*}
  E_ J &= \sum_{j>J^\star}    \left(2^{j/(2\alpha+1)}-\frac{2^J}{(j-J+1)^\theta}\right) 2^{-j} \\
& \geq \sum_{j>J^\star}    \left(2^{j/(2\alpha+1)}-2^{J^\star/(2\alpha+1)}\right)
2^{-j} \\
&\geq C \ 2^{-2 J^\star \alpha/(2\alpha+1)}\\
& \geq C \ (\log)^{2\alpha \theta } \ 2^{-2J\alpha}. 
\end{align*}
So,
\[
\sup_{J\geq 0} 2^{2J\alpha}\sum_{j \geq J} \sum_{k\geq
  \lfloor2^{J}(j-J+1)^{-\theta}\rfloor}^{2^j-1} |\beta_{j}|_{(k)}^2 = \infty.
\]
This implies that $s_1\notin \mathcal{A}^\alpha_{\Mstratcol}$.
Finally $(\nu i)$ is proved.
\hfill
  $\blacksquare$\par\noindent
\section*{Acknowledgments} We warmly thanks the anonymous referees for their carefull reading and 
their remarks which allow us to improve the paper.
\bibliographystyle{plain}

\end{document}